\title{Ricci flow on Riemannian groupoids}
\author{Christian Hilaire}
\newtheorem{thm}{Theorem}
\newtheorem{ppr}[thm]{Proposition}
\newtheorem{lma}[thm]{Lemma}
\newtheorem{crly}[thm]{Corollary}
\theoremstyle{remark}
\newtheorem{rema}[thm]{Remark}
\theoremstyle{remark}
\newtheorem{exam}[thm]{Example}
\begin{document}

\begin{abstract}
We study the Ricci flow on Riemannian groupoids. We assume that these groupoids are closed in the sense of \cite{lott}[Section 5] and that the space of orbits is compact and connected. We prove the short time existence and uniqueness of the Ricci flow on these groupoids. We also define a $\mathcal{F}$-functional and derive the corresponding results for steady breathers on these spaces.
\end{abstract}
 
\maketitle

\section{Introduction}

	     One of the basic methods used in Ricci flow and in other geometric evolution equations is the blowup or blowdown analysis. This method consists of constructing a sequence of pointed complete solutions of the Ricci flow by a parabolic rescaling of the original solution. If this sequence has uniform bounds on the curvature and if there is a uniform lower bound on the injectivity radius of the basepoints at time zero, then this sequence will converge in a smooth sense to a pointed complete Ricci flow solution \cite{hamilton1}. The condition on the injectivity radius insures that the sequence does not collapse to a lower dimensional manifold. One of Perelman's key results was showing the existence of such a lower bound in the case of finite time singularities \cite{perelman}. On the other hand, if we look at Ricci flow solutions that exists for all $t \geq 0$ and try to determine the behavior as $t \rightarrow \infty $ by performing a blowdown analysis, we don't necessarily have a lower bound on the injectivity radius. Thus, the constructed sequence can collapse. However, as shown in \cite{lott}, the limit can be described as a Ricci flow on a pointed closed Riemannian groupoid of the same dimension. A Riemannian groupoid is an \'etale groupoid $(\mathcal{G},M)$ with a Riemannian metric $g$ on the space of units $M$ such that the local diffeomorphisms generated by the groupoid elements are local isometries. In other words, the metric is invariant under the action of $\mathcal{G}$ on $M$.
			
			 The study of the Ricci flow on groupoids can help us understand the long-time behavior of the Ricci flow on smooth manifolds. Such an approach was used in \cite{hilaire} to study the long time behavior of the Ricci tensor for an immortal solution with unifomly bounded curvature and diameter. In this paper, we provide some fundamental results. We consider the case where the space of orbits of the groupoid is compact and connected. Our first result is the short time existence and uniqueness of a solution of the Ricci flow.

\begin{thm}

Suppose $(\mathcal{G},M)$ is a closed groupoid with compact connected space of orbits $W=M/\mathcal{G}$ and suppose $g_{0}$ is a $\mathcal{G}$-invariant Riemannian metric on $M$. Then there exists a unique solution $g(t)$ of the Ricci flow defined on a maximal interval $[0,T)$ and with initial condition $g(t)=g(0)$. Furthermore, if $T<\infty$, then 
\begin{eqnarray*}
	sup_{M} |Rm|(.,t) \rightarrow \infty
	\end{eqnarray*}
	as $t$ approaches $T$.
	\end{thm}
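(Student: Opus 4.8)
The plan is to adapt the DeTurck trick to the $\mathcal{G}$-invariant setting, thereby converting the (only weakly parabolic) Ricci flow into a strictly parabolic system whose short-time theory parallels the closed-manifold case because the orbit space $W$ is compact. First I would fix the initial metric $g_0$ as a background $\mathcal{G}$-invariant metric $\tilde g$ and introduce the DeTurck vector field $W^k = g^{ij}(\Gamma^k_{ij}(g) - \tilde\Gamma^k_{ij})$. Since $W$ is built naturally from $g$ and $\tilde g$, and since every groupoid element acts by a local isometry of both metrics, the vector field $W$ is itself $\mathcal{G}$-invariant; consequently the Ricci--DeTurck flow $\partial_t g = -2\,\mathrm{Ric}(g) + \mathcal{L}_W g$ preserves $\mathcal{G}$-invariance, and this system is strictly parabolic in $g$.

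The second step is short-time existence and uniqueness for the $\mathcal{G}$-invariant Ricci--DeTurck flow. The key observation is that, although $M$ need not be compact, the space of $\mathcal{G}$-invariant sections of a natural tensor bundle over $M$ behaves like the space of sections over a closed manifold once $W$ is compact: one covers $W$ by finitely many charts coming from an \'etale atlas and defines H\"older (or Sobolev) norms of invariant tensors using this finite cover, obtaining Banach spaces with the completeness and embedding properties used in the standard theory. Running the usual linear parabolic (Schauder) estimates together with a contraction-mapping or analytic-semigroup argument on these spaces yields a unique $\mathcal{G}$-invariant solution $g(t)$ on a maximal interval $[0,T)$. Because $W$ is $\mathcal{G}$-invariant, the time-dependent family of local diffeomorphisms it generates commutes with the groupoid's local isometries; pulling $g(t)$ back by this family produces a $\mathcal{G}$-invariant solution of the genuine Ricci flow with $g(0)=g_0$. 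Uniqueness for the Ricci flow then follows from uniqueness for the Ricci--DeTurck flow together with uniqueness for the ODE defining the diffeomorphisms, exactly as in the closed case, after checking that the coupling harmonic-map-type flow likewise preserves $\mathcal{G}$-invariance.

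For the blowup statement I would argue by contradiction. Suppose $T<\infty$ but $\sup_M |Rm|(\cdot,t)$ stays bounded as $t\to T$. Because $g(t)$ and $Rm$ are $\mathcal{G}$-invariant and $W$ is compact, a bound on $\sup_M |Rm|$ is genuinely uniform, being equivalent to a bound over a fundamental domain for the groupoid. Shi's local derivative estimates are local in nature and hence apply verbatim to the invariant metric, giving uniform bounds on every covariant derivative $|\nabla^m Rm|$ on $[T/2,T)$. These bounds control $\partial_t g = -2\,\mathrm{Ric}$ and all its spatial derivatives, so the metrics $g(t)$ converge in $C^\infty$, uniformly over the compact orbit space, to a smooth $\mathcal{G}$-invariant limit $g(T)$. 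Applying the short-time existence established above with $g(T)$ as initial data extends the flow past $T$, contradicting maximality; hence $\sup_M |Rm|(\cdot,t)\to\infty$ as $t\to T$.

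The main obstacle is the second step: setting up the parabolic existence and uniqueness theory for $\mathcal{G}$-invariant tensor fields over the compact orbit space. On a general noncompact $M$ one would need bounded-geometry hypotheses, and the substitute here is precisely the compactness of $W$ together with the closedness of the groupoid. Making rigorous that the $\mathcal{G}$-invariant H\"older spaces inherit the functional-analytic properties—completeness, interpolation, and Schauder estimates—needed to run the standard argument, most cleanly by descending to the \'etale structure and working with a finite atlas for $W$ or by invoking the invariant elliptic theory of \cite{lott}, is where the real work lies.
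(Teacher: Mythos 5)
There are two genuine gaps, both at the places you flag as ``the real work.'' For short-time existence of the Ricci--DeTurck flow, your plan to build invariant H\"older spaces from a finite \'etale atlas over $W$ and run Schauder theory there does not obviously close: $W$ is only a length space, not a manifold, the unit space $M$ need not be complete or of bounded geometry (the paper stresses this explicitly), and the natural tensor bundle $Sym(T^{\ast}M)$ does not descend to $W$ because the isotropy groups act nontrivially on the fibers. So there is no compact base over which the standard linear parabolic theory applies directly, and on $M$ itself the noncompact theory is unavailable without completeness. The paper's key idea, which is absent from your proposal, is to pass to the orthonormal frame bundle $F_{\tilde g}$, where the lifted groupoid $\hat{\mathcal{G}}$ acts with trivial isotropy so that $Z=F_{\tilde g}/\hat{\mathcal{G}}$ is a genuine smooth closed manifold; one then augments the horizontal DeTurck operator by the Laplacian along the $O(n)$-fibers to obtain a strictly parabolic, $O(n)$- and $\hat{\mathcal{G}}$-invariant equation that descends to $Z$, solves it there by the standard closed-manifold theory, and pulls the $O(n)$-invariant solution back to a $\mathcal{G}$-invariant Ricci--DeTurck solution on $M$. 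Without some such device your second step is not a routine adaptation but the main open point.

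The uniqueness step is also not what you describe. Deducing uniqueness of the Ricci flow from uniqueness of the Ricci--DeTurck flow requires solving a harmonic-map-heat-flow type equation starting from an arbitrary Ricci flow solution, and then comparing the two solutions through the generated (time-dependent, only locally defined) diffeomorphisms; on a possibly incomplete $M$ where the DeTurck vector field integrates only to a family of self-equivalences of the groupoid rather than global diffeomorphisms, this program has its own existence and well-posedness issues, which is exactly why the paper avoids it. Instead the paper proves uniqueness by Kotschwar's energy method: it forms the weighted energy $\mathcal{E}(t)=\int_M\bigl(t^{-1}|h|^2+t^{-\alpha}|A|^2+|S|^2\bigr)e^{-c}\phi^2\,d\mu_g$ using the Haar system, its cutoff function $\phi$, and the mean curvature form $\theta$, establishes the Bando--Shi estimates via the $\mathcal{G}$-geodesic exponential map, and closes a Gronwall argument with the groupoid integration-by-parts formula. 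Your contradiction argument for the blowup alternative does match the paper.
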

	As in the manifold case, we will prove the short time existence of the Ricci flow by proving the short time existence of the Ricci-DeTurck flow. The proof of the short-time existence is not a straightforward adaptation of the usual proof because the unit space
of the groupoid need not be a complete Riemannian manifold. Instead, we consider the frame bundle associated to a background Riemannian metric, quotient this space by the lifted groupoid action and construct an auxiliary parabolic partial differential equation.
     
		To prove the uniqueness of the Ricci flow, we will avoid dealing with the theory of harmonic map heat flow and instead use the energy functional approach introduced by Kotschwar in \cite{kotschwar}. We prove the result in greater generality, namely
for Riemannian groupoids of bounded curvature with a complete orbit space and that satisfy a mild growth condition on the volume. The uniqueness result was used in \cite{lott3} in order to prove results about spatial asymptotics for Ricci flow on some noncompact manifolds.

	The space of ``arrows'' $\mathcal{G}$ of the closed Riemannian groupoid is equipped with two topologies. One that gives it the structure of an \'etale groupoid and another one that gives it the structure of a proper Lie groupoid. We construct a family $\Xi$ of smooth Haar systems for this proper Lie groupoid. Recall that a Haar system is a collection of measures $\{ d\rho^{p} \}_{p \in M}$ where, for each $p \in M$, the measure has support on $r^{-1}(p)=\mathcal{G}^{p}$, the preimage of $p$ under the range map of the groupoid. These measures are $\mathcal{G}$-invariant in the appropriate sense and satisfy an additional property. For simplicity, we shall sometimes denote a Haar system by $\rho$ or by $d\rho$.
	
	The family $\Xi$ of Haar systems that we construct can be identified with $C^{\infty}_{\mathcal{G}}(M)$, the space of smooth $\mathcal{G}$-invariant functions on the space of units $M$, as follows. For a fixed $\rho_{0}$ of $\Xi$, we have the bijection
\begin{eqnarray*}
	C^{\infty}_{\mathcal{G}}(M) & \rightarrow & \Xi \\
	 w                & \rightarrow  & e^{w} d\rho_{0}.
\end{eqnarray*}

	This identification is clearly not natural since it depends on the choice of the representative $\rho$. However, we use it to prove certain results in this paper.
	
	Furthermore, to each element of $\Xi$, we will associate a $\mathcal{G}$-invariant closed one-form $\theta$ that comes from a Riemannian submersion generated by the closed groupoid.  We prove the following 
\begin{thm} \label{monotonos}
Suppose $(\mathcal{G},M)$ is a closed groupoid with compact connected space of orbits $W=M/\mathcal{G}$. Let $\mathcal{M}_{\mathcal{G}}$ be the space of $\mathcal{G}$-invariant Riemannian metrics on $M$ and let $\Xi$ be the family of smooth Haar systems mentioned above. We define the $\mathcal{F}$-functional $\mathcal{F}:  \mathcal{M}_{\mathcal{G}} \times \Xi  \rightarrow \mathbb{R}$ by
\begin{eqnarray}
\mathcal{F}(g, \rho) =\int_{M} \left(R + |\theta |^2 \right)\,  \phi^2d\mu_{g}  \label{Ffunctionalprime}
\end{eqnarray}
where $R$ is the scalar curvature of the metric $g$, $d\mu_{g}$ the Riemannian density, $\theta$ the one-form associated to the Haar system and $\phi$ any smooth nonnegative cutoff function for the Haar system.                      
	Under the flow equations 
\begin{eqnarray}
\frac{\partial g}{\partial t}&=& -2Ric(g)  \label{monotonoseq} \\
\frac{\partial (d\rho)}{\partial t} &=& \left(|\theta|^2 -R-\mathrm{div}(\theta) \right) d\rho \notag
\end{eqnarray}
the evolution of the $\mathcal{F}$-functional is given by
\begin{eqnarray}
\frac{d}{dt}\mathcal{F}(g(t),\rho(t))=2 \int_{M} |Ric(g) + \frac{1}{2}\mathcal{L}_{\theta^{\sharp}}g|^2 \, \phi^2 d\mu_{g} \geq 0.
\end{eqnarray}
\end{thm}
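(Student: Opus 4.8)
The plan is to adapt Perelman's original computation of the monotonicity of the $\mathcal{F}$-functional to the groupoid setting, taking careful account of the cutoff function $\phi$ and the one-form $\theta$ associated to the Haar system. The key structural fact I would exploit is that, because $\phi^2 d\mu_g$ should play the role of the weighted measure $e^{-f}dV$ in Perelman's setup, the quantity $\theta$ encodes the ``gradient of $f$'' coming from the Riemannian submersion. So the strategy is to reduce the groupoid integral to a manifold-like computation by working on the orbit space, where the cutoff $\phi$ localizes everything to a compact region and permits integration by parts without boundary terms.

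**First I would** assemble the three standard evolution equations under $\partial_t g = -2\mathrm{Ric}$: the evolution of the scalar curvature $R$ (the trace Bochner-type formula $\partial_t R = \Delta R + 2|\mathrm{Ric}|^2$), the evolution of the Riemannian density $\partial_t d\mu_g = -R\, d\mu_g$, and — using the prescribed flow for the Haar system — the evolution of $|\theta|^2$ and of the combination $\phi^2 d\rho$. The crucial point is that the coupled flow $\partial_t(d\rho) = (|\theta|^2 - R - \mathrm{div}\,\theta)\,d\rho$ is engineered precisely so that the weighted measure $\phi^2 d\mu_g$ (equivalently the density built from $d\rho$) evolves in the way that makes the $f$-Laplacian self-adjoint, mirroring the conjugate heat equation in Perelman's argument. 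I would then differentiate
\begin{eqnarray*}
\frac{d}{dt}\mathcal{F} = \int_M \left(\partial_t R + \partial_t |\theta|^2\right)\phi^2\,d\mu_g + \int_M (R+|\theta|^2)\,\partial_t\!\left(\phi^2 d\mu_g\right),
\end{eqnarray*}
substitute the evolution equations, and integrate by parts.

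**The main obstacle** I anticipate is the integration by parts: in the manifold case one moves the Laplacian off $R$ onto the weight and collects terms into the perfect square $|\mathrm{Ric} + \nabla^2 f|^2$. Here two new issues arise. First, the weighted divergence theorem must hold against the measure $\phi^2 d\mu_g$; since $\phi$ is a cutoff for the Haar system (not compactly supported on $M$ itself, only descending compactly to the orbit space $W$), I must verify that all boundary contributions vanish — this is where compactness and connectedness of $W$ and the defining properties of $\phi$ enter decisively. Second, $\theta$ is not literally $df$ for a global function but a closed $\mathcal{G}$-invariant one-form, so the Hessian term $\nabla^2 f$ must be replaced by the symmetrized covariant derivative $\tfrac{1}{2}\mathcal{L}_{\theta^\sharp}g = \tfrac{1}{2}(\nabla\theta + \nabla\theta^T)$, and I must check that closedness of $\theta$ makes the antisymmetric part drop out so the cross terms still combine into the stated perfect square.

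**Finally I would** verify that after all cancellations the integrand collapses to $2\,|\mathrm{Ric}(g) + \tfrac{1}{2}\mathcal{L}_{\theta^\sharp}g|^2\,\phi^2$, which is manifestly nonnegative, yielding both the stated identity and the inequality $\tfrac{d}{dt}\mathcal{F}\geq 0$. The matching of the trace and divergence terms against the prescribed Haar-system evolution is the arithmetic heart of the proof, and I expect it to go through exactly as designed once the integration-by-parts step is justified; the genuinely new content relative to the classical case is purely the justification of that step in the groupoid framework, so that is where I would concentrate the rigor.
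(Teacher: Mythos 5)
Your overall plan --- treat $\phi^2 d\mu_g$ as the analogue of Perelman's weighted measure $e^{-f}dV$ and $\theta$ as the analogue of $df$, then differentiate $\mathcal{F}$ directly along the coupled flow and integrate by parts into a perfect square --- is sound and would work, but it is organized differently from the paper. The paper first computes the \emph{general} first variation for arbitrary $\delta g = v$, $\delta(d\rho)=\beta\,d\rho$, obtaining $\delta\mathcal{F} = -\int_M\langle Ric+\nabla\theta, v\rangle\,\phi^2 d\mu_g + \int_M(2\,\underline{\mathrm{div}}(\theta)+|\theta|^2+R)\left(\tfrac{V}{2}-\beta\right)\phi^2 d\mu_g$; it then imposes $\tfrac{V}{2}=\beta$, identifies the gradient flow $\partial_t g = -2(Ric+\nabla\theta)$, and finally passes to the stated system $\partial_t g=-2Ric$, $\partial_t(d\rho)=(|\theta|^2-R-\mathrm{div}(\theta))\,d\rho$ by pulling back along the one-parameter family of self-equivalences $\Psi_t$ generated by $\theta^{\sharp}$, including a short argument that the normalization $\int_M\phi^2 d\mu_g=\mathrm{const}$ forces the additive constant $c(t)$ in $\beta'$ to vanish. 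Your direct route avoids the self-equivalence step at the cost of having to track the evolution of $\theta$ explicitly inside the computation; both are legitimate, and the paper's route buys the gradient-flow interpretation and the steady soliton equation for free.

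Two points in your plan are mis-aimed, and they are exactly where the groupoid-specific content lives. First, the integration by parts is not a matter of ``boundary contributions vanishing'': since $W$ is compact the cutoff $\phi$ is genuinely compactly supported on $M$, so there are no boundary terms at all; the real issue is that $\nabla(\phi^2)$ is not $\mathcal{G}$-invariant, and averaging $d\phi^2$ over the Haar system produces $-\theta$ (equation (\ref{cutoffprop2})), so the correct weighted formula is $\int_M\langle\mathrm{div}(\alpha),\omega\rangle\,\phi^2 d\mu_g = -\int_M\langle\alpha,\nabla\omega\rangle\,\phi^2 d\mu_g + \int_M\langle\iota_{\theta^{\sharp}}\alpha,\omega\rangle\,\phi^2 d\mu_g$ (Lemma~\ref{byparts-lemma}). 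That correction term is precisely what lets the cross terms close up into $|Ric+\tfrac12\mathcal{L}_{\theta^{\sharp}}g|^2$; without it the computation does not produce a perfect square. Second, to compute $\partial_t|\theta|^2$ you need the evolution of the mean curvature form itself, $\partial_t\theta = d\beta$ (equation (\ref{theta-evolution})), which is a nontrivial fact about how $\theta$ responds to rescaling the Haar system by $e^{w}$; your proposal invokes ``the evolution of $|\theta|^2$'' without identifying this ingredient. You are right that closedness of $\theta$ makes $\nabla\theta$ symmetric, so $\tfrac12\mathcal{L}_{\theta^{\sharp}}g=\nabla\theta$ and the antisymmetric part causes no trouble; with the two ingredients above supplied, your computation would go through.
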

As is shown in \cite{gorokhovsky}, the integrals listed above do not depend on the choice of the nonnegative cutoff function for the Haar system.

The paper is structured as follows. In section 2, we collect and prove some results about closed Riemannian groupoids. In particular, we go over the construction of the cross-product groupoid on the orthonormal frame bundle as outlined in \cite{gorokhovsky}[Section 2]. This construction plays a major role in most of the proofs in this paper. We also  define the family $\Xi$ of Haar systems that was mentioned above. In Section 3, we prove the uniqueness of the Ricci flow solution and, in Section 4, we prove the short time existence. Section 5 is devoted to the $\mathcal{F}$-functional and the corresponding results for steady breathers. Finally, we include an appendix where we briefly study the case of $2$-dimensional closed Riemannian groupoids.

 After the first version of this paper was posted on the arXiv, Bedulli-He-Vezzoni posted a preprint with independent but related results \cite{bedulli}.
	
 \textbf{Acknowledgements:} I would like to thank my adviser John Lott for helpful discussions and comments on earlier drafts of this paper.

\section{Riemannian groupoids}  \label{sec:groupoids}
In this section, we list the necessary material that we will need about groupoids. For information about the basic definitions and constructions in the theory of groupoids, we refer to the books \cite{bridson,moerdijk}. 

           In Subsection 2.1, we introduce some notation and review the concept of a complete effective Riemannian (\'etale) groupoid. In Subsection 2.2, we construct the ``closure'' of such a groupoid as defined in \cite{haefliger,salem1}. This will give us a groupoid equipped with two topologies, one which gives it the structure of a proper Lie groupoid and a finer one which gives it the structure of a complete effective Riemannian groupoid. In Subsection 2.3, we briefly recall Haefliger's local models for the structure of a closed Riemannian groupoid.
					
					In the next three subsections, we go over the same constructions done in \cite{gorokhovsky}[Sections 2 and 3]. We construct the cross-product groupoid on the orthonormal frame bundle and use it to generate a Haar system for the closed groupoid and the associated mean curvature form. We equip this Haar system with a nonnegative cutoff function. These quantities are used in Subsection 2.6 to define a measure for integrating functions on the space of units which are invariant under the groupoid action. We also derive an integration by parts formula for this measure. Then, in Subsection 2.7, we define the family of Haar sytems $\Xi$ and derive some relevant evolution equations for the Haar system and the mean curvature form. These evolution equations will be used when we will differentiate the energy functional in Section 3 and the $\mathcal{F}$-functional in Section 6.
					
					The last subsection is devoted to $\mathcal{G}$-paths and $\mathcal{G}$-invariant vector fields. We recall how the smooth $\mathcal{G}$-paths can be used to define a length structure on the space of orbits and show that any $\mathcal{G}$-invariant vector field defines a one-parameter family of differentiable equivalences of the Riemannian groupoid in the sense of \cite{haefliger}[Subsection 1.4].

\subsection{Riemannian groupoids} \label{subsec:groupoid-basics}
We will denote a smooth \'etale groupoid by a pair $(\mathcal{G},M^{n})$ where $M^{n}$ is the space of units and $\mathcal{G}$ is the space of ``arrows''. The corresponding source and range maps will be denoted by $s$ and $r$ respectively. Our convention is that, for any $h_1$ and $h_2 \in \mathcal{G}$, the product $h_1h_2$ is defined if and only if $s(h_1)=r(h_2)$. Given any $x \in M$, we write $\mathcal{G}^{x}$ for $r^{-1}(x)$, $\mathcal{G}_{x}$ for $s^{-1}(x)$, $\mathcal{G}^{x}_{x}$ for the isotropy group $s^{-1}(x) \cup r^{-1}(x)$ of $x$ and $O_{x}$ for the orbit of $x$. Furthermore, given any $h \in \mathcal{G}$, we will denote by $dh: T_{s(h)}M \rightarrow T_{r(h)}M$ the linearization of $h$.  Recall that this is simply the differential at $s(h)$ of a diffeomorphism $\phi:U \rightarrow V$ where $U$ and $V$ are open neighborhoods of $s(h)$ and $r(h)$ respectively and the map $\phi$ has the form $r\circ \alpha$ for some local section $\alpha:U \rightarrow \mathcal{G}$ of $s$ at $h$. More precisely, the map $\alpha$ satisfies $s \circ \alpha=id $ and $\alpha(s(h))=h$. We will consider \'etale groupoids up to \emph{weak equivalence} (\cite{moerdijk}[Chapter 5.4]). A smooth \'etale groupoid is said to be \emph{developable} if it is weakly equivalent to the \'etale groupoid associated to the action of a discrete group on a smooth manifold through diffeomorphisms.

			Given a smooth manifold $M$, let $\Gamma(M)$ be the space of germs of diffeomorphisms $\phi:U \rightarrow V$  between open subsets of $M$. Then, for any smooth \'etale groupoid $(\mathcal{G},M)$, we have the map
	\begin{eqnarray*}
	\kappa:\mathcal{G} &\rightarrow & \Gamma(M) \\
	    h       & \rightarrow & germ_{s(h)}( r\circ \alpha)
  \end{eqnarray*}
	where, as before, the map $\alpha$ is a local section of $s$ at $h$. The \'etale groupoid  $(\mathcal{G},M)$ is said to be \emph{effective} if the map $\kappa$ is injective.
	
	We will also say that the \'etale groupoid  $(\mathcal{G},M)$ is \emph{complete} if, for every pair of points $x$ and $y$ of $M$, there are neighborhoods $U$ and $V$ of $x$ and $y$ respectively so that, for any $h \in \mathcal{G}$ with $s(h) \in U$ and $r(h) \in V$, there is a corresponding section of $s$ defined on all of $U$.
	   	    
  Finally, a smooth \'etale groupoid $(\mathcal{G},M)$ is \emph{Riemannian} if $M$ is equipped with a Riemannian metric $g$ such that the elements of the pseudogroup generated by $\mathcal{G}$ are local isometries. Equivalently, for each $h \in \mathcal{G}$, the linearization $dh: (T_{s(h)}M,g_{s(h)}) \rightarrow (T_{r(h)}M,g_{r(h)})$ is an isometry of inner product spaces. We will then say that the metric $g$ is $\mathcal{G}$-invariant and we will denote the Riemannian groupoid by the triple $(\mathcal{G}, M,g)$. From now on, we will assume that the Riemannian groupoids are complete and effective unless otherwise stated.

\subsection{The closure of a Riemannian groupoid} \label{subsec:closure-of-groupoids}
 We will now recall how to construct the closure of a Riemannian groupoid in the sense of \cite{haefliger,salem1}. Given a smooth manifold $M^{n}$, we let $J^1(M)$ be the space of 1-jets of local diffeomorphisms equipped with the 1-jet topology. Elements of $J^1(M)$ can be represented by triples $(x,y,A)$ where $x, y \in M$ and $A: T_{x}M  \rightarrow T_{y}M$ is a linear bijection. The pair $(J^1(M),M)$ is a smooth Lie groupoid in the sense of \cite{moerdijk}[Chapter 5] with source and range maps defined by $s(x,y,A)=x$ and $r(x,y,A)=y$ respectively. It is not \'etale unless $\dim(M)=0$. A Riemannian metric $g$ on $M$ induces a smooth Lie subgroupoid $(J^1_{g}(M),M)$ of $(J^1(M),M)$  where $J^1_{g}(M) \subset J^1(M)$ consists of triples $(x,y,A)$ where the map $A$ is now a linear isometry with respect to $g$.

     A Lie groupoid $(\mathcal{G},M)$ is said to be \emph{proper} if the space $\mathcal{G}$ is Hausdorff and the map $(s,r): \mathcal{G} \rightarrow M \times M$ is proper. In \cite{gorokhovsky}[Lemma 1], it was shown that the Lie groupoid $(J^1_{g}(M),M)$ generated by a Riemannian metric on $M$ is proper. Indeed, the space $J^1_{g}(M)$ is Hausdorff and the map $(s,r): J^1_{g}(M) \rightarrow M\times M$ is proper since it defines a fiber bundle structure with fiber diffeomorphic to the compact Lie group $O(n)$.
		
  Given a Riemannian groupoid $(\mathcal{G},M,g)$, there is a natural map of $\mathcal{G}$ into $J^1(M)$ defined by
\begin{align*}
\nu:\mathcal{G} \rightarrow & J^1(M) \\
    h             \rightarrow & \left( s(h),r(h), dh \right).
\end{align*}
Since the metric $g$ is $\mathcal{G}$-invariant, the image of this map is contained in $J^1_{g}(M)$. Furthermore, the map $\nu$ is injective. This follows from our assumption that the Riemannian groupoid is effective and the fact that the germ of a local isometry is completely determined by its 1-jet. The space $\mathcal{G}$ can now be viewed as a subset of $J^1(M)$ and we can consider its closure $\overline{\mathcal{G}} \subset J^1(M)$ with respect to the 1-jet topology. The pair $( \overline{\mathcal{G}},M)$ is a topological subgroupoid of $(J^1_{g}(M),M)$. Since $\overline{\mathcal{G}}$ is contained in $J^1_{g}(M)$, it follows from \cite{salem2}[Section 2] that $( \overline{\mathcal{G}},M)$ is in fact a smooth Lie subgroupoid of $(J^1_{g}(M),M)$. The orbits of this Lie groupoid are closed submanifolds of $M$ and correspond to the closures of the orbits of the groupoid $(\mathcal{G},M)$ (see \cite{salem1}[Section 3]). It also follows that the quotient space $W= M/ \overline{\mathcal{G}}$, equipped with the quotient topology, is a Hausdorff space. The corresponding quotient map will be denoted by $\sigma: M \rightarrow W$. Finally, the fact that $\overline{\mathcal{G}}$ is a subset of $J^1_{g}(M)$ implies that the Lie groupoid $( \overline{\mathcal{G}},M)$ is proper.

     In addition to the subspace topology induced by the 1-jet topology on $J^1(M)$, the space $\overline{\mathcal{G}}$ can be equipped with a finer topology which gives the triple $(\overline{\mathcal{G}},M,g)$ the structure of a complete effective Riemannian groupoid. For now on, we will assume that the Riemannian groupoid $(\mathcal{G},M,g)$ satisfy $\overline{\mathcal{G}}=\mathcal{G}$. We will then say that the groupoid is \emph{closed}. Also, when we will work with the space $\mathcal{G}$, we will assume, unless otherwise stated, that it is equipped  with the subspace topology induced by the 1-jet topology. We will call the quotient space $W$ the \emph{space of orbits} and we will in general assume that it is compact and connected.
		
\subsection{Local structure of the closed groupoid} \label{localModel}
 Let $(\mathcal{G}, M,g)$ be a closed Riemannian groupoid as before and let $\mathfrak{g}_{M}$ be the corresponding Lie algebroid as defined in \cite{moerdijk}[Chapter 6]. As mentioned in \cite{gorokhovsky}[Subsection 2.5], $\mathfrak{g}_{M}$ is a $\mathcal{G}$-equivariant flat vector bundle over $M$  whose fibers are copies of a fixed Lie algebra $\mathfrak{g}$ called the \textbf{structural Lie algebra}. If $an: \mathfrak{g}_{M} \rightarrow TM$ is the anchor map of the Lie algebroid and if $P: U \times \mathfrak{g} \rightarrow \mathfrak{g}_{M}$ is a local parallelization over an open set $ U \subset M$, then the map $an \circ P$ describes a Lie algebra of Killing vector fields on $U$ ismorphic to $\mathfrak{g}$.

   For a fixed point $p \in M$, let $K$ denote the isotropy group of the closed groupoid at $p$. $K$ is a compact Lie group and we will denote its Lie algebra by $\mathfrak{k}$. Since $\mathfrak{g}$ can be realized as the Lie algebra of Killing vector fields on some neighborhood of $p$, there is a natural inclusion $i: \mathfrak{k} \rightarrow \mathfrak{g}$.
Furthermore, there is a representation $Ad: K \rightarrow Aut(\mathfrak{g})$ such that 
\begin{enumerate}
\item $ Ad $ leaves invariant the subalgebra $\mathfrak{k}=i(\mathfrak{k})$ of $\mathfrak{g}$ and $Ad|_{\mathfrak{k}}$ is the adjoint representation of $K$ on $\mathfrak{k}$.
 \item The differential of $Ad$ is the adjoint representation of $\mathfrak{k}$ on $\mathfrak{g}$.
 \end{enumerate}
Finally, there is a representation $T: K \rightarrow Gl(V)$ given by the action of $K$ on the subspace $V$ of $T_pM$ which is the orthogonal complement of the tangent space $T_{p}O_p$ of the orbit space $O_p$. The tangent space $T_{p}O_p$ is isomorphic to the vector space $\mathfrak{g}/\mathfrak{k}$ and the linear representation of $K$ on $T_{p}M$, which is faithful, corresponds to the direct sum representation $T \oplus Ad$ on $V \oplus \mathfrak{g}/\mathfrak{k}$.

It is shown in \cite{haefliger}[Section 4] that the quintuple $(\mathfrak{g},K,i,Ad,T)$ determines the weak equivalence class of the restriction of $\mathcal{G}$ (with the \'etale topology) to a a small invariant neighborhood of the orbit $O_{p}$. Furthermore, given such a quintuple, we can construct an explicit local model for the structure of the groupoid.

\subsection{The induced groupoid over the orthonormal frame bundle.}  \label{subsec:induced-groupoid}
Given a Riemannian groupoid $(\mathcal{G},M,g)$, let $\pi: F_{g} \rightarrow M$ be the orthonormal frame bundle associated to the metric $g$. This is a principal $O(n)$-bundle over $M$. There is a right action of $\mathcal{G}$ on $F_{g}$  defined by saying that if $h \in \mathcal{G}$ and $f$ is an orthonormal frame at $r(h)$ then $f\cdot h$ is the frame $(dh)^{-1}(f)$ at $s(h)$. We can then construct the corresponding cross-product groupoid with space of ``arrows''
\begin{align*}
\hat{\mathcal{G}}=F_{g} \rtimes \mathcal{G}=\{(f,h) \in F_{g} \times \mathcal{G}: \pi(f)=r(h)	\}
\end{align*}
and space of units $F_{g}$. The source and range maps are defined by $s(f,h)=f\cdot h$  and $r(f,h)=f$ respectively.  It is clear that $(\hat{\mathcal{G}},F_{g})$ has the structure of a smooth proper Lie groupoid and the structure of a smooth \'etale groupoid. The latter property implies, in particular, that the linearization $d\hat{h}:T_{s(\hat{h})}F_{g} \rightarrow T_{r(\hat{h})} F_{g}$ is well-defined for any $\hat{h} \in \hat{\mathcal{G}}$. Also, the groupoid $(\hat{\mathcal{G}},F_{g})$ has trivial isotropy groups.
       
			Let $\nabla$ be the Levi-Civita connection of the Riemannian metric $g$.  This connection defines an $O(n)$-invariant Riemannian metric $\hat{g}$ on $F_{g}$ such that the map $\pi : (F_{g},\hat{g}) \rightarrow (M,g)$ is a Riemanniann submersion and such that the restriction of $\hat{g}$ to a $\pi$-fiber is equal to the bi-invariant metric on $O(n)$ with unit volume. Since the metric $g$ on $M$ is $\mathcal{G}$-invariant, the metric $\hat{g}$ on $F_{g}$ is $\hat{\mathcal{G}}$-invariant.

  Let $Z$ be the space of orbits of the groupoid $(\hat{\mathcal{G}},F_{g})$ with quotient map $\hat{\sigma}: F_{g}\rightarrow Z$ . The connection $\nabla$ defines a canonical parallelism of $F_{g}$ which is invariant under the action of $\hat{\mathcal{G}}$.  It follows from \cite{salem1}[Theorem 4.2] that $Z$ is a smooth manifold and the quotient map $\hat{\sigma}$  is a smooth submersion. In fact, since the metric $\hat{g}$ on $F_{g}$ is $\hat{\mathcal{G}}$-invariant, it induces a metric $\overline{g}$ on $Z$ which makes $\hat{\sigma}$ a Riemannian submersion. Furthermore, the action of $O(n)$ on $F_{g}$ naturally induces an action of $O(n)$ on $Z$ and the metric $\overline{g}$ is invariant under this action. We have a commutative diagram 
\begin{eqnarray}
 F_{g}&\xrightarrow{\hat{\sigma}}& Z \notag\\
 \pi \downarrow &                & \downarrow \hat{\pi}  \label{CommutativeDiagram} \\
 M    &\xrightarrow{\sigma}      & W \notag
 \end{eqnarray}
 where the map $\hat{\sigma}$ is $O(n)$-equivariant and the maps $\pi$ and $\hat{\pi}$ correspond to taking $O(n)$- quotients. The space $Z$ is compact since $W$ is compact.

  The space $\hat{\mathcal{G}}$ comes from an equivalence relation on $F_{g}$ defined by saying that $f,f^{\prime}$ are equivalent if and only if $\hat{\sigma}(f)=\hat{\sigma}(f^{\prime})$. More precisely, there is an $O(n)$-equivariant isomorphism $\mathcal{G} \simeq F_{g} \times_{Z} F_{g}$.

		\subsection{ The Haar system generated by an invariant metric and the associated mean curvature form} \label{subsec:Haar-system}
		As we mentioned earlier, a Haar system for a proper Lie groupoid $(\mathcal{G},M)$ is a collection of positive measures $\{ d\rho^{p} \}_{p \in M}$ which is $\mathcal{G}$-invariant in the appropriate sense and such that the support of the measure $d\rho^{p}$ is $\mathcal{G}^{p}$ for any $ p \in M$. Furthermore, given any function $\alpha \in C_c(\mathcal{G})$, the function
	\begin{eqnarray*}
	 p \rightarrow \int_{\mathcal{G}^p} \alpha(h)\,d\rho^{p}(h)
	\end{eqnarray*}
	is in $C_c(M)$ (\cite{tu}[Definition 1.1]).

		Let $(\mathcal{G}, M,g)$ be a closed Riemannian groupoid as above. For $f \in F_{g}$, let $d\rho^{f}$ be the measure on $\hat{\mathcal{G}}$ which is supported on $\hat{\mathcal{G}}^{f} \simeq \hat{\sigma}^{-1}\left(\hat{\sigma}(f)\right)$ and is given there by the fiberwise Riemannian density. The family $\{d\rho^{f} \} _{f \in F_{g}}$  is a Haar system for the proper Lie groupoid $(\hat{\mathcal{G}},F_{g})$. In particular, the family of measures $\{d\rho^{f} \} _{f \in F_{g}}$ is $\hat{\mathcal{G}}$-invariant in an appropriate sense.
	
	Let $p \in M$ and $f \in F_{g}$ be such that $\pi(f)=p$. There is a diffeomorphism $i_{p,f}:\mathcal{G}^{p} \rightarrow \hat{\mathcal{G}}^{f}$ given by $i_{p,f}(h)=(f,h)$. Let $d\rho^{p}$ be the measure on $\mathcal{G}$ which is supported on $\mathcal{G}^{p}$ and is given there by the pullback measure $i_{p,f}^{\ast}d\rho^{f}$. Since the family of measures $\{d\rho^{f}\}_{f \in \pi^{-1}(p)}$ is $O(n)$-equivariant, the measure $d\rho^{p}$ is independent of the choice of $f$. 
	  
		The family of measures $\{d\rho^{p}\}_{p \in M}$ is a Haar system for the proper Lie groupoid $(\mathcal{G}, M)$. As follows from \cite{gorokhovsky}[Lemma 4] or \cite{tu2}[Proposition 6.11], we can construct a smooth nonnegative cutoff function for this Haar system. That is, there exists a nonnegative smooth function $ \phi \in C^{\infty}(M)$ such that
\begin{enumerate}
 \item For any compact subset $K$ of $M$, the set $supp(\phi) \cap s(r^{-1}(K))$ is also compact. 
 \item $\displaystyle \int_{\mathcal{G}^p} \phi^2(s(h)) \, d\rho^{p}(h)=1$  for any $p \in M$.
 \end{enumerate}
  Since we are assuming that the space of orbits $W$ is compact, the first condition simply means that $\phi$ has compact support.
	
Let $\hat{\theta}$ be the mean curvature form of the induced Riemannian submersion $\rho: (F_{g},\hat{g}) \rightarrow (Z,\overline{g})$. By \cite{gorokhovsky}[Lemma 3], the form $\hat{\theta}$ is a closed one-form which is $\hat{\mathcal{G}}$-basic and $O(n)$-basic. Let $\theta$ be the unique one-form on $M$ such that $\pi^{\ast} \theta =\hat{\theta}$. The form $\theta$ is closed and $\mathcal{G}$-invariant. 
         
				Let $E \rightarrow M$ be a vector bundle on $M$ equipped with a right $\mathcal{G}$-action and let $\nabla^{E}$  be a $\mathcal{G}$-invariant connection on $E$. Given $h \in \mathcal{G}$ and $e \in E_{s(h)}$, let $e \cdot h^{-1} \in E_{r(g)}$ denote the action of $h^{-1}$ on $e$. For any compactly supported element $\xi \in C^{\infty}_{c}(M;E)$,  we can construct the section  
\begin{eqnarray}
	\int_{\mathcal{G}^{p}} \xi_{s(h)}\cdot h^{-1} \, d\rho^{p}(h) \label{g-averaging}
\end{eqnarray}
  which will be a $\mathcal{G}$-invariant element of $C^{\infty}(M;E)$ whose value at $p \in M$ is given by (\ref{g-averaging}).  By \cite{gorokhovsky}[Lemma 6], we have the following identity in $\Omega^{1}(M;E)$:
	\begin{eqnarray*}
\nabla^{E}\int_{\mathcal{G}^{p}} \xi_{s(h)}\cdot h^{-1} \, d\rho^{p}(h)&=&\int_{\mathcal{G}^{p}} (\nabla^{E} \xi)_{s(h)}\cdot h^{-1} \, d\rho^{p}(h) \\
                                                                       & &  + \, \theta_{p} \int_{\mathcal{G}^{p}} \xi_{s(h)}\cdot h^{-1} \, d\rho^{p}(h).
\end{eqnarray*}
	
	It follows that, for any $\omega \in \Omega^{*}_{c}(M)$, we have 
 \begin{eqnarray}
d\int_{\mathcal{G}^{p}} \omega_{s(h)}\cdot h^{-1} \, d\rho^{p}(h)&=&\int_{\mathcal{G}^{p}} (d\omega)_{s(h)}\cdot h^{-1} \, d\rho^{p}(h)  \notag \\
                                                                 & & + \, \theta_{p} \wedge \int_{\mathcal{G}^{p}} \omega_{s(h)}\cdot h^{-1} \, d\rho^{p}(h). \label{differentiate}
\end{eqnarray}
In particular, if we take $\omega$ to be equal to the function $\phi^2$ and apply the definition of the nonnegative cutoff function, we get
\begin{eqnarray}
\theta_{p}=-\int_{\mathcal{G}^{p}} (d\phi^2)_{s(h)}\cdot h^{-1} \, d\rho^{p}(h) \label{cutoffprop2}
\end{eqnarray}
for every $p \in M$.  
 
\subsection{Integration of $\mathcal{G}$-invariant functions.} \label{subsec:integration-of-Gfunctions}
  We denote by $d\mu_{g}$ the Riemannian measure on $(M,g)$ and by $C_{\mathcal{G}}(M)$  the space of continuous $\mathcal{G}$-invariant functions on $M$. We will integrate  these functions by using the measure $\phi^2 d\mu_{g}$ on $M$. It is shown in \cite{gorokhovsky}[Proposition 1] that the integral
\begin{eqnarray*}
\int_{M} f  \, \phi^2 d\mu_{g}
\end{eqnarray*}
does not depend on the nonnegative cutoff function $\phi$ for any $f \in C_{\mathcal{G}}(M)$. More precisely, let $d\mu_{\overline{g}}$ be the Riemannian measure on $(Z,\overline{g})$ and let $d\eta(g)= \hat{\pi}_{\ast}d\mu_{\overline{g}}$ be the pushforward measure on $W$. A $\mathcal{G}$-invariant continuous function on $M$ can be viewed as a continuous function on $W$ and as an $O(n)$-invariant continuous function on $Z$. It follows from \cite{gorokhovsky}[Proposition 1] that the following identity holds
\begin{eqnarray*}
\int_{M} f \phi^2 \, d\mu_{g}= \int_{Z} f \, d\mu_{\overline{g}}= \int_{W} f \, d\eta(g)
\end{eqnarray*}
for any $f \in C_{\mathcal{G}}(M)$. Furthermore, the proof of \cite{gorokhovsky}[Proposition 1] can be adapted to obtain the following integration by parts formula.

\begin{lma} \label{byparts-lemma}
Let $\alpha$ be a $(r,s)$-tensor on $M$ and $\omega$  an $(r-1,s)$-tensor on $M$. If $\alpha$ and $\omega$ are $\mathcal{G}$-invariant, then 
\begin{eqnarray}
\int_{M} \langle \mathrm{div}(\alpha), \omega \rangle \, \phi^2 d\mu_{g} &=& -\int_{M} \langle \alpha, \nabla \omega\rangle \, \phi^2 d\mu_{g}  + \int_{M} \langle \iota_{\theta^{\sharp}} \alpha , \omega \rangle \, \phi^2 d\mu_{g} \label{integpart}
\end{eqnarray} 
                                                                         
where $\langle , \rangle$ denotes the inner product defined by the metric $g$ and $\iota_{\theta^{\sharp}} \alpha$ is the interior product of the dual $\theta^{\sharp}$ of $\theta$ with $\alpha$.
  \end{lma}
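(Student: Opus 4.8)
The plan is to reduce the formula to the ordinary divergence theorem on $M$ together with an averaging identity that repackages the defining property \eqref{cutoffprop2} of the cutoff function. First I would contract $\alpha$ and $\omega$ into a single vector field: in a local orthonormal frame, contracting $\alpha$ against $\omega$ over every slot except the first, set $V_i = \alpha_{iK}\,\omega_K$, where $K$ is the multi-index recording the remaining $r+s-1$ slots, and let $V$ denote the dual vector field. A direct computation in normal coordinates gives the pointwise identity $\mathrm{div}(V) = \langle \mathrm{div}(\alpha),\omega\rangle + \langle \alpha,\nabla\omega\rangle$, while $\langle \theta^{\sharp},V\rangle = \langle \iota_{\theta^{\sharp}}\alpha,\omega\rangle$. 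Because $\alpha$, $\omega$ and $g$ are $\mathcal{G}$-invariant and $V$ is obtained from them by metric contractions, $V$ is a $\mathcal{G}$-invariant vector field, and the functions $\mathrm{div}(V)$ and $\theta(V)$ are $\mathcal{G}$-invariant as well.

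Since $W$ is compact the cutoff $\phi$ has compact support, so $\phi^2 V$ is a compactly supported vector field on $M$ and the ordinary divergence theorem gives $\int_M \mathrm{div}(\phi^2 V)\,d\mu_g = 0$. Expanding $\mathrm{div}(\phi^2 V) = \phi^2\,\mathrm{div}(V) + V(\phi^2)$ yields $\int_M \phi^2\,\mathrm{div}(V)\,d\mu_g = -\int_M V(\phi^2)\,d\mu_g$, so it remains to identify $-\int_M V(\phi^2)\,d\mu_g$ with $\int_M \theta(V)\,\phi^2\,d\mu_g$. For this I would establish the averaging identity $\int_M F\,d\mu_g = \int_M \mathrm{Av}(F)\,\phi^2\,d\mu_g$ for every $F \in C^{\infty}_c(M)$, where $\mathrm{Av}(F)(p) = \int_{\mathcal{G}^p} F(s(h))\,d\rho^p(h)$ is the $\mathcal{G}$-invariant average. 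Granting this, I apply it to $F = V(\phi^2)$: pairing \eqref{cutoffprop2} with $V_p$ and using the $\mathcal{G}$-invariance of $V$ (so that $V_{s(h)}\cdot h^{-1} = V_p$) to cancel the transport by $h^{-1}$ shows $\mathrm{Av}\!\big(V(\phi^2)\big) = -\theta(V)$, whence $-\int_M V(\phi^2)\,d\mu_g = \int_M \theta(V)\,\phi^2\,d\mu_g$. Combining the two displays gives $\int_M \big(\langle\mathrm{div}(\alpha),\omega\rangle + \langle\alpha,\nabla\omega\rangle\big)\,\phi^2\,d\mu_g = \int_M \langle\iota_{\theta^{\sharp}}\alpha,\omega\rangle\,\phi^2\,d\mu_g$, which rearranges to \eqref{integpart}.

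The averaging identity is where I expect the real work, and it is precisely the step that adapts the proof of \cite{gorokhovsky}[Proposition 1]. I would lift $F$ to $\pi^*F$ on $F_g$; since the $\pi$-fibers are copies of $O(n)$ with unit volume, $\int_M F\,d\mu_g = \int_{F_g}\pi^*F\,d\mu_{\hat g}$. Applying the coarea formula to the Riemannian submersion $\hat{\sigma}\colon (F_g,\hat g)\to (Z,\overline{g})$ and recognizing the fiberwise Riemannian density over $\hat{\sigma}(f)$ as the Haar measure $d\rho^f$ (whose pullback under $i_{p,f}$ is $d\rho^p$) converts the inner fiber integral into $\mathrm{Av}(F)(\pi(f))$, so that $\int_M F\,d\mu_g = \int_Z \mathrm{Av}(F)\,d\mu_{\overline{g}}$. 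Finally the pushforward identity $\int_Z f\,d\mu_{\overline{g}} = \int_M f\,\phi^2\,d\mu_g$ for $\mathcal{G}$-invariant $f$, applied to $f = \mathrm{Av}(F)$, closes the loop.

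The main points requiring care are the correct identification of the fiberwise density with the Haar system under the diffeomorphism $i_{p,f}$ (together with the $O(n)$-equivariance needed for $d\rho^p$ to be well defined), and the verification that the action on covectors dual to the action on $V$ makes the transport by $h^{-1}$ in \eqref{cutoffprop2} drop out. Both are essentially bookkeeping once the coarea disintegration on $F_g$ is in place; the pointwise tensor computation in the first step and the application of the divergence theorem in the second are routine.
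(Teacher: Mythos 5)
Your proof is correct and follows essentially the same route as the paper: ordinary integration by parts against the compactly supported $\phi^2$, with the leftover term $-\int_M V(\phi^2)\,d\mu_g$ (your $V$ is just the metric dual of the paper's one-form $\alpha\ast\omega$) converted via the lift to $F_g$, the disintegration over $Z$, and the identity~(\ref{cutoffprop2}) into $\int_M\theta(V)\,\phi^2\,d\mu_g$. The only difference is organizational — you isolate the averaging identity $\int_M F\,d\mu_g=\int_M \mathrm{Av}(F)\,\phi^2\,d\mu_g$ and apply (\ref{cutoffprop2}) downstairs on $M$, whereas the paper runs the same computation upstairs on $F_g$ with the lifted form $\hat\theta$.
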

	\begin{proof}
	Since the nonnegative cutoff function $\phi$ has compact suppport, we can use the regular integration by parts formula.
	\begin{eqnarray*}
\int_{M} \langle \mathrm{div}(\alpha), \omega \rangle \, \phi^2 d\mu_{g} &=& -\int_{M}  \langle \alpha, \nabla(\phi^2 \omega) \rangle \,d\mu_{g}.
\end{eqnarray*}
                                                     
This gives
\begin{eqnarray*} 
\int_{M} \langle \mathrm{div}(\alpha), \omega \rangle \, \phi^2 d\mu_{g} &=& -\int_{M}  \langle \alpha, \nabla\omega \rangle  \, \phi^2 d\mu_{g} -\int_{M} \langle \alpha, \nabla \phi^2 \otimes \omega \rangle \, d\mu_{g}.
\end{eqnarray*}
                                                           
We  can rewrite the second integrand of the last equation as 
\begin{eqnarray*}
-\int_{M} \langle \alpha, \nabla \phi^2 \otimes \omega \rangle \, d\mu_{g}= -\int_{M} \langle \alpha \ast \omega, \nabla \phi^2 \rangle \, d\mu_{g}
\end{eqnarray*}
where $\alpha \ast \omega$ is the 1-form given by $\alpha \ast \omega(X)= \langle \iota_{X}\alpha,\omega \rangle$ for any vector $X$. Then, just as in the proof of \cite{gorokhovsky}[Proposition 1], we rewrite 
\begin{eqnarray}
-\int_{M} \langle \alpha \ast \omega, \nabla \phi^2 \rangle \, d\mu_{g}&=&-\int_{F_{g}} \pi^{\ast}(\langle \alpha \ast \omega,\nabla \phi^2 \rangle) \, d\mu_{\hat{g}}  \notag\\
                                                        &=& -\int_{Z} \left(\int_{F_{g}/Z} \pi^{\ast}(\langle \alpha \ast \omega,\nabla \phi^2 \rangle) d\mu_{\hat{g}}\right)d\mu_{\overline{g}}. \label{trick}
	\end{eqnarray}
The inner integral of the last equation corresponds to integrating over the ``fibers'' $\sigma^{-1}(\sigma(f)) \simeq \hat{\mathcal{G}}^{f}$ where $f \in F_{g}$. Note that the function $\hat{\phi}=\pi^{\ast}\phi$ is a cutoff function for the Haar system $\{ d\rho^{f} \}_{f \in F_{g}}$.  Also, just like Equation~(\ref{cutoffprop2}), we have 
	\begin{eqnarray*}
\hat{\theta}_{f}=-\int_{\hat{\mathcal{G}}^{f}} (d\hat{\phi}^2)_{s(\hat{h})}\cdot \hat{h}^{-1} \, d\rho^{f}(\hat{h})
\end{eqnarray*}
Hence, if $\chi$ is a $\hat{\mathcal{G}}$-invariant 1-form on $F_{g}$, we have
\begin{eqnarray*}
\langle\hat{\theta}_{f},\chi_{f} \rangle	&=& -\int_{\hat{\mathcal{G}}^{f}} \langle (\nabla\hat{\phi}^2)_{s(\hat{h})},\chi_{s(\hat{h})}\rangle \, d\rho^{f}(\hat{h})		\\
\int_{\hat{\mathcal{G}}^{f}} \langle \hat{\theta}_{s(\hat{h})},\chi_{s(\hat{h})} \rangle \, \hat{\phi}^2(s(\hat{h}))d\rho^{f}(\hat{h})&=&-\int_{\hat{\mathcal{G}}^{f}} \langle(\nabla\hat{\phi}^2)_{s(\hat{h})},\chi_{s(\hat{h})} \rangle \, d\rho^{f}(\hat{h}).
\end{eqnarray*}
If we apply this to (\ref{trick}), we obtain
\begin{eqnarray*}
	-\int_{M} \langle \alpha \ast \omega, \nabla \phi^2 \rangle \, d\mu_{g}&=&	\int_{Z} \left(\int_{F_{g}/Z} \langle \pi^{\ast}(\alpha \ast \omega),\hat{\theta} \rangle\, \hat{\phi}^2 d\mu_{\hat{g}}\right)d\mu_{\overline{g}}\\
	                            &=& \int_{M} \langle \alpha \ast \omega, \theta \rangle \, \phi^2 d\mu_{g} \\
															&=&  \int_{M} \langle \iota_{\theta^{\sharp}} \alpha , \omega \rangle \, \phi^2 d\mu_{g}.
\end{eqnarray*}	
So 
\begin{eqnarray*}
\int_{M} \langle \mathrm{div}(\alpha), \omega \rangle \, \phi^2 d\mu_{g} &=&-\int_{M}  \langle \alpha, \nabla\omega \rangle  \, \phi^2 d\mu_{g}  + \int_{M} \langle \iota_{\theta^{\sharp}} \alpha , \omega \rangle \, \phi^2 d\mu_{g}.
	\end{eqnarray*}													                                        
	This completes the proof.
	\end{proof}
	One can easily generalized this result to the case where the space of orbits $W$ is not necessarily compact. We will have to assume then that the image, under the quotient map $\sigma$, of the support of the tensor $\alpha$ or the tensor $\omega$ is compact.
	
	\subsection{The family of Haar systems $\Xi$} \label{subsec:evolution-formulas}
	Suppose $(\mathcal{G},M^{n})$ is a smooth complete effective \'etale groupoid and suppose that, for some $T < \infty$,  we have a smooth time-dependent family of $\mathcal{G}$-invariant Riemannian metrics $g(t)$ on $M$ defined for $t \in [0,T]$. The closed groupoid $(\overline{\mathcal{G}},M^{n})$ does not depend on the choice of the $\mathcal{G}$-invariant metric $g$. As before, we will assume that $\overline{\mathcal{G}}=\mathcal{G}$ and that the space of orbits $W$ is compact and connected. We will determine how the Haar system and the mean curvature form constructed in the previous subsections vary with respect to time. Given two $\mathcal{G}$-invariant metrics $g_1$ and $g_2$, one can see that, based on the way we  constructed the Haar system, there is some $\mathcal{G}$-invariant smooth function $w$ such that $d\rho^{p}_2 = e^w d\rho^{p}_1$ where $ \{d\rho^{p}_1 \}_{p \in M}$ and $\{d\rho^{p}_2\}_{p \in M}$ are the Haar systems generated by $g_1$ and $g_2$ respectively. So given a one parameter family of invariant metrics $g(t)$, there will be a time-dependent $\mathcal{G}$-invariant smooth function $\beta$ such that the evolution equation of the Haar system will be given by $\frac{\partial (d\rho^{p})}{\partial t} = \hat{\beta}(p)d\rho^{p}$. We will now show this precisely. The Haar system and the associated mean curvature form were obtained by looking at the action of the groupoid on the orthonormal frame bundle. This space depends on the choice of the metric $g$. To bypass this problem, we will use the ``Uhlenbeck trick''.
	
	Suppose that the evolution equation of the metric $g(t)$ is given by 
	\begin{eqnarray}
	\frac{\partial g}{\partial t} = v \label{g-evolution}
	\end{eqnarray}
	where $v=v(t)$ is a smooth $\mathcal{G}$-invariant symmetric 2-tensor on $M$. Let $E$ be a vector bunle over $M$ isomorphic to the tangent bundle $TM$ via a vector bundle isomorphism  $u_{0}: E \rightarrow  TM$. We equip $E$ with the bundle metric $(.,.)=u_{0}^{\ast}g_{0}$ where $g_{0}=g(0)$. The map $u_{0}$ is now a bundle isometry.
	     
			For $t \in [0,T]$, consider the one-parameter family of bundle isomorphisms $u_{t}: E \rightarrow M$ that satisfy the ODE
	\begin{eqnarray}
	\frac{\partial u}{\partial t}&=& -\frac{v}{2}\circ u  \label{u-trick}\\
	u(0)&=& u_{0} \notag
	\end{eqnarray}
		where we use the metric $g(t)$ to view $v(t)$ as a bundle map $TM \rightarrow TM$. This ODE has a solution defined on $[0,T]$. Furthermore, by construction, the map $u_{t}$ is a bundle isometry $(E,(.,)) \rightarrow (T_{g(t)}M,g_t)$ for each $t$.
		
		The right action of $\mathcal{G}$ on $TM$ induces, for each $t \in [0,T]$, a right action of $\mathcal{G}$ on $E$ given by 
\begin{eqnarray}
w\cdot h= u_{t}^{-1}\left((u_{t}(w))\cdot h\right)=u_{t}^{-1}\circ (dh)^{-1} \circ u_{t}(w) \label{induced-action}
\end{eqnarray}

where $h \in \mathcal{G}$ and $w \in E_{r(h)}$.

\begin{lma}
The right  action of $\mathcal{G}$ on $E$ does not depend on $t$.
\end{lma}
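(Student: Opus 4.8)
The plan is to show that the $t$-derivative of the linear map defining the action vanishes identically. Fix $h \in \mathcal{G}$ and regard the action of $h$ as the composition
\[
\Phi_t := u_t^{-1} \circ (dh)^{-1} \circ u_t : E_{r(h)} \to E_{s(h)},
\]
where $u_t$ is read as $u_t|_{E_{r(h)}}$ on the right and as $u_t|_{E_{s(h)}}$ on the left, and where the middle factor $(dh)^{-1}$ is independent of $t$. Since $g(t)$ is $\mathcal{G}$-invariant, the linearization $dh$ is a $g(t)$-isometry for every $t$; I will write $v^\sharp$ for the bundle endomorphism of $TM$ obtained from $v$ by raising an index with $g(t)$, so that the defining ODE reads $\partial_t u_t = -\tfrac{1}{2}\, v^\sharp \circ u_t$.

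First I would record the companion ODE for the inverse: differentiating $u_t \circ u_t^{-1} = \mathrm{id}$ and inserting the ODE for $u_t$ yields $\partial_t (u_t^{-1}) = \tfrac{1}{2}\, u_t^{-1} \circ v^\sharp$. Substituting both ODEs into the product rule for $\Phi_t$, and using that $(dh)^{-1}$ is constant in $t$, gives
\[
\partial_t \Phi_t = \tfrac{1}{2}\, u_t^{-1} \circ \Bigl( v^\sharp_{s(h)} \circ (dh)^{-1} - (dh)^{-1} \circ v^\sharp_{r(h)} \Bigr) \circ u_t .
\]
Everything therefore reduces to the single intertwining identity $dh \circ v^\sharp_{s(h)} = v^\sharp_{r(h)} \circ dh$, i.e. that $dh$ conjugates the endomorphism $v^\sharp$ at the source fiber into the one at the range fiber.

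To establish this identity I would pair both candidate endomorphisms against an arbitrary $W = dh(Y) \in T_{r(h)}M$ and invoke the two invariances at once: the $g(t)$-invariance of the metric converts $g_{r(h)}\bigl(dh(\cdot), dh(\cdot)\bigr)$ into $g_{s(h)}(\cdot,\cdot)$, while the $\mathcal{G}$-invariance of the variation $v$ converts $v_{r(h)}\bigl(dh(\cdot), dh(\cdot)\bigr)$ into $v_{s(h)}(\cdot,\cdot)$. A short computation then shows that both $g_{r(h)}\bigl(dh\, v^\sharp_{s(h)}(X),\, W\bigr)$ and $g_{r(h)}\bigl(v^\sharp_{r(h)}(dh\, X),\, W\bigr)$ collapse to $v_{s(h)}(X,Y)$, so the two endomorphisms agree. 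Hence the bracketed term above is zero, $\partial_t \Phi_t \equiv 0$, and $\Phi_t = \Phi_0$ for all $t$, which is exactly the claimed independence of $w \cdot h$ from $t$.

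The point to get right is the simultaneous use of the two invariances: the formula for $w \cdot h$ carries explicit $t$-dependence through $u_t$, and it is precisely the $\mathcal{G}$-equivariance of $v^\sharp$ — which requires both that $g(t)$ remain invariant and that the variation $v$ be invariant — that forces the two boundary terms in $\partial_t \Phi_t$ to cancel. Note that this is a purely fiberwise argument at each $h \in \mathcal{G}$, so no completeness, compactness, or properties of the orbit space enter.
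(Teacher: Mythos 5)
Your argument is correct and follows the same route as the paper: differentiate $u_t^{-1}\circ (dh)^{-1}\circ u_t$ in $t$, use the ODE for $u_t$ and its inverse, and reduce to the commutation of $(dh)^{-1}$ with $v^{\sharp}$, which the paper also deduces from the $\mathcal{G}$-invariance of $v$ (your explicit pairing argument just spells out what the paper leaves implicit).
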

\begin{proof}
If we differentiate Equation~(\ref{induced-action}) with respect to time, we get
\begin{eqnarray*}
\frac{\partial (w\cdot h)}{\partial t}=  \frac{\partial u^{-1}}{\partial t}\circ (dh)^{-1} \circ u(w) + u^{-1}\circ (dh)^{-1} \circ \frac{\partial u}{\partial t}(w). \label{proof2}
\end{eqnarray*}
Using (\ref{u-trick}), we get
\begin{eqnarray*}
\frac{\partial u^{-1}}{\partial t}= u^{-1} \circ \frac{v}{2}.
\end{eqnarray*}
Hence, Equation~(\ref{proof2}) becomes 
\begin{eqnarray*}
\frac{\partial (w\cdot h)}{\partial t}= u^{-1} \circ \frac{v}{2} \circ (dh)^{-1} \circ u(w) -u^{-1}\circ (dh)^{-1} \circ \frac{v}{2}\circ u(w).
\end{eqnarray*}
Since the tensor $v$ is $\mathcal{G}$-invariant, it follows that
\begin{eqnarray*}
\frac{v}{2} \circ (dh)^{-1} = (dh)^{-1} \circ \frac{v}{2} .
\end{eqnarray*}
Therefore,
\begin{eqnarray*}
\frac{\partial (w\cdot h)}{\partial t}=0.
\end{eqnarray*}
This completes the proof.
\end{proof}

Let $ \pi: \mathbf{F} \rightarrow  M$ be the orthonormal frame bundle of the metric bundle $(E,h) \rightarrow M$. For each $t \in [0,T]$, the map $u_{t}$ induces a bundle isomorphism $\mathbf{F} \rightarrow F_{g(t)}$. Also, it follows from the previous lemma that the induced cross-product groupoid over $\mathbf{F}$ with $\hat{\mathcal{G}}=\mathbf{F} \rtimes \mathcal{G}$ does not depend on $t$.

   Just as in subsection~\ref{subsec:induced-groupoid}, the space of orbits $Z=\mathbf{F}/\hat{\mathcal{G}}$ is a smooth manifold and the quotient map $\hat{\sigma}: \mathbf{F} \rightarrow Z$ is a smooth submersion. Indeed, for each $t \in [0,T]$, the pullback connection $A(t)=u_{t}^{\ast}\nabla_{g(t)}$ induces a canonical parallelism on $\mathbf{F}$ which is preserved by $\hat{\mathcal{G}}$. Furthermore, we get a smooth time-dependent $\hat{\mathcal{G}}$-invariant and $O(n)$-invariant metric $\hat{g}(t)$ on $\mathbf{F}$ which induces a smooth time dependent $O(n)$-invariant Riemannian metric $\overline{g}(t)$ on $Z$. This makes $\hat{\sigma}: (\mathbf{F},\hat{g}(t)) \rightarrow (Z,\overline{g}(t))$ an $O(n)$-equivariant Riemannian submersion for each $t$. We can then construct the time dependent Haar system $\{d\rho^{f} \} _{f \in \mathbf{F}}$ and mean curvature form $\hat{\theta}$ as described in subsection~\ref{subsec:Haar-system}. For each $t$, this will induce the Haar system $\{d\rho^{p}\}_{p \in M}$ and the mean curvature form $\theta$ corresponding to $g(t)$. We can then construct a smooth time-dependent nonnegative cutoff function $\phi$ for the Haar system.
	
	The evolution equation for the Riemannian metric $\hat{g}$ on $\mathbf{F}$ has the form
	\begin{eqnarray}
	\frac{\partial \hat{g}}{\partial t} = \hat{v} \label{Fg-evolution}
	\end{eqnarray}
where $\hat{v}$ is a $\hat{\mathcal{G}}$ and $O(n)$-invariant symmetric 2-tensor on $\mathbf{F}$. Recall that, for each $f \in \mathbf{F}$ and each $t \in [0,T]$, the measure $d\rho^{f}$ on $\hat{\mathcal{G}}^{f} \simeq \hat{\sigma}^{-1}(\sigma(f))$ is the fiberwise Riemannian density with respect to the metric $\hat{g}$. So the evolution equation for the measure $ d\rho^{f}$ is given by
	\begin{eqnarray}
\frac{\partial (d\rho^{f})}{\partial t} = \hat{\beta}d\rho^{f}. \label{haar-evolution1}
\end{eqnarray}
  where $2 \hat{\beta}$ is equal to the trace of $\hat{v}$ along the fiber $\hat{\sigma}^{-1}(\sigma(f))$ with respect to the metric $\hat{g}$. The function $\hat{\beta}$ is a $\hat{\mathcal{G}}$ and $O(n)$-invariant smooth function on $\mathbf{F}$. It induces a $\mathcal{G}$-invariant function $\beta$ on $M$.
	
	\begin{lma}
	The evolution equations for the Haar system $\{d\rho^{p}\}_{p \in M}$ and the mean curvature form $\theta$ are given by 
\begin{eqnarray}
\frac{\partial (d\rho^{p})}{\partial t} = \beta(p)d\rho^{p}. \label{haar-evolution2}
\end{eqnarray}
for every $p \in M$ and 
\begin{eqnarray}
\frac{\partial \theta}{\partial t}= d\beta \label{theta-evolution}.
\end{eqnarray}
	\end{lma}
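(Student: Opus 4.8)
The plan is to establish the two evolution equations in order, deriving the mean-curvature formula (\ref{theta-evolution}) from the Haar-system formula (\ref{haar-evolution2}). For (\ref{haar-evolution2}), the essential point is that the Uhlenbeck trick has rendered the frame bundle $\mathbf{F}$, the cross-product groupoid $\hat{\mathcal{G}}$, the projection $\pi$, and the identifications $i_{p,f}:\mathcal{G}^p \to \hat{\mathcal{G}}^f$ all independent of $t$; only the metrics $\hat{g}(t)$, and hence the fiberwise densities $d\rho^f$, carry time dependence. Since $d\rho^p = i_{p,f}^{\ast} d\rho^f$ for a fixed map $i_{p,f}$, I would differentiate under the pullback and invoke (\ref{haar-evolution1}) to get $\partial_t(d\rho^p) = i_{p,f}^{\ast}(\hat{\beta}\, d\rho^f)$. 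The support of $d\rho^f$ is the $\hat{\mathcal{G}}$-orbit $\hat{\sigma}^{-1}(\hat{\sigma}(f))$, on which the $\hat{\mathcal{G}}$-invariant function $\hat{\beta}$ is constant with value $\hat{\beta}(f) = \beta(\pi(f)) = \beta(p)$; pulling this constant out of the pullback yields $\partial_t(d\rho^p) = \beta(p)\,d\rho^p$.

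For (\ref{theta-evolution}), I would differentiate the identity (\ref{cutoffprop2}), namely $\theta_p = -\int_{\mathcal{G}^p}(d\phi^2)_{s(h)}\cdot h^{-1}\,d\rho^p(h)$, in which there are two sources of time dependence: the cutoff function $\phi(t)$ and the measure $d\rho^p(t)$. The key observation is that the right $\mathcal{G}$-action on differential forms is determined by the linearizations $dh$, which depend only on the groupoid elements and not on the metric; hence the action is $t$-independent, and $\partial_t$ commutes both with it and with the exterior derivative. Setting $\psi := \partial_t(\phi^2)$, the term arising from differentiating the integrand becomes $-\int_{\mathcal{G}^p}(d\psi)_{s(h)}\cdot h^{-1}\,d\rho^p(h)$, while the term from differentiating the measure, using (\ref{haar-evolution2}) together with the fact that $\beta(p)$ is constant over $\mathcal{G}^p = r^{-1}(p)$, reduces to $\beta(p)\,\theta_p$.

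To finish, I would apply the differentiation formula (\ref{differentiate}) with $\omega = \psi$ to rewrite the first term as $-d\Psi + \Psi\,\theta$, where $\Psi(p) := \int_{\mathcal{G}^p}\psi(s(h))\,d\rho^p(h)$ is the $\mathcal{G}$-average of $\psi$. The final ingredient is to identify $\Psi$: differentiating the normalization condition $\int_{\mathcal{G}^p}\phi^2(s(h))\,d\rho^p(h) = 1$ in $t$ and again invoking (\ref{haar-evolution2}) gives $\Psi(p) + \beta(p) = 0$, so $\Psi = -\beta$. Substituting, the $\Psi\,\theta$ and $\beta\,\theta$ contributions cancel and one is left with $\partial_t\theta = d\beta$. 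I expect the main obstacle to be the bookkeeping around the $\mathcal{G}$-action: one must verify that this action carries no hidden time dependence, so that $\partial_t$, $d$, and the groupoid-averaging operator all commute, and that the constant $\beta(p)$ legitimately factors out of the integrals over $\mathcal{G}^p$. Once these points are secured, the cancellation producing $d\beta$ is forced by the normalization of the cutoff function.
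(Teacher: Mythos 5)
Your argument is correct, and for the first equation it coincides with the paper's (which simply notes that (\ref{haar-evolution2}) follows from (\ref{haar-evolution1}) and the $\mathcal{G}$-invariance of $\beta$, exactly the fiberwise-constancy point you spell out). For the second equation both proofs rest on the identity (\ref{cutoffprop2}), but you execute the computation in infinitesimal form where the paper integrates first: the paper writes $d\rho^{p}_{t}=e^{w(p,t)}d\rho^{p}_{0}$ with $w(p,t)=\int_{0}^{t}\beta(p,s)\,ds$, observes that $\phi_{t}^{2}=e^{-w}\phi_{0}^{2}$ is then a cutoff function at time $t$, plugs this into (\ref{cutoffprop2}) to obtain the closed-form relation $\theta=\theta_{0}+dw$, and differentiates. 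You instead differentiate (\ref{cutoffprop2}) under the integral sign, use (\ref{differentiate}) to convert the $d(\partial_{t}\phi^{2})$ term, and identify the averaged quantity $\Psi=-\beta$ from the normalization of the cutoff function, after which the $\Psi\theta$ and $\beta\theta$ terms cancel. The bookkeeping you flag (that the $\mathcal{G}$-action on forms via $dh$ carries no time dependence, and that $\beta(p)$ is constant on $\mathcal{G}^{p}=r^{-1}(p)$) is exactly what makes both computations legitimate. The paper's integrated version buys a slightly stronger statement for free, namely the explicit formula $\theta_{t}=\theta_{0}+dw$ (used later to show the class $\Theta$ is metric-independent), and sidesteps differentiating under the integral; your version avoids having to check that $e^{-w}\phi_{0}^{2}$ is a valid cutoff at time $t$, at the cost of the extra normalization step. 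Both are sound.
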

	\begin{proof}
		The first equation follows easily from (\ref{haar-evolution1}) and the fact that $\beta$ is $\mathcal{G}$-invariant. To prove the second equation, we start by writing the Haar system as 
\begin{eqnarray*}
 d\rho^{p}_{t}= e^{w(p,t)} d\rho^{p}_{0}
\end{eqnarray*}
where $\{  d\rho^{p}_{0} \}_{p \in M} $ is the Haar system at time $t=0$ and where the $\mathcal{G}$-invariant function $w$ is defined by $w(p,t) = \int_{0}^{t} \beta(p,s) \, ds$. It follows that a nonnegative cutoff function for time $t$ is given by $ \phi^2_t = e^{-w} \phi^2_{0}$ where $\phi^2_{0}$ is the nonnegative cutoff function at time $t=0$.  Then, using Equation~(\ref{cutoffprop2}), we get
\begin{eqnarray*}
 \theta = dw + \theta_0
\end{eqnarray*}
where $\theta_0$ is the mean curvature form at time $t=0$. Differentiating with respect to time gives us equation (\ref{theta-evolution}).
\end{proof}

So, as we stated earlier, the smooth Haar system constructed in Subsection~\ref{subsec:Haar-system} is independent of the $\mathcal{G}$-invariant metric $g$ modulo scaling by smooth positive $\mathcal{G}$-invariant functions. We will denote the family of these Haar systems by $\Xi$: an element of $\Xi$ is a Haar system that can be written in the form $\{e^{w}d\rho^{p}\}_{p \in M}$ where $w$ is a $\mathcal{G}$-invariant function and where $\{d\rho^{p}\}_{p \in M}$ is the Haar system generated by some  $\mathcal{G}$-invariant Riemannian metric $g$.
In particular, if we fix an element $d\rho_0$ of $\Xi$ and if we denote by $C^{\infty}_{\mathcal{G}}(M)$ the space of $\mathcal{G}$-invariant smooth functions on $M$, we get a bijection $C^{\infty}_{\mathcal{G}}(M) \rightarrow  \Xi$ defined by $w \rightarrow e^{w} d\rho$.

	It also follows from the previous Lemma that the $\mathcal{G}$-invariant de Rham cohomology class of the mean curvature form $\theta$ is independent of the $\mathcal{G}$-invariant metric $g$. We will denote this class by $\Theta$. Furthermore, one can see that if a representative $\theta_1$ of $\Theta$ is written in the form $\theta_1= \theta_2  + dw $ where $\theta_2$ is the mean curvature form associated to a Haar system $\{d\rho_2 \}_{p \in M}$ generated by a metric $g$, then $\theta_1$ corresponds to the Haar system $\{d\rho_1 \}_{p \in M}=\{e^{w}d\rho_2 \}_{p \in M}$ which can be thought as coming from a $\mathcal{G}$-invariant and $O(n)$-invariant rescaling of the metric $\hat{g}$ along the $\hat{\sigma}$-fibers of $F_{g}$. A nonnegative cutoff function for the Haar system $\{d\rho_1 \}_{p \in M}$ will be said to be compatible with $\theta_1$ and it is clear that, if $\phi_2$ is a nonnegative cutoff function for $\{d\rho_2 \}_{p \in M}$, then a nonnegative cutoff function for $\{d\rho_1 \}_{p \in M}$ is given by $\phi_1^2 =e^{-w} \phi_2^2$. One can then easily deduce the following result.

\begin{crly} \label{byparts-cr2}
  If $g$ is a $\mathcal{G}$-invariant metric and if $\theta$ is any representative of the class $\Theta$, then the integration by parts formula of Lemma~\ref{byparts-lemma} is valid for any nonnegative cutoff function $\phi$ that is compatible with $\theta$.
	\end{crly}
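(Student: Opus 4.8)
The plan is to reduce the corollary directly to Lemma~\ref{byparts-lemma} by exhibiting, for any chosen representative $\theta$ of the class $\Theta$, a $\mathcal{G}$-invariant Haar system whose associated mean curvature form is exactly $\theta$ and a nonnegative cutoff function compatible with it, and then observing that the proof of the lemma used only these structural features. First I would fix the $\mathcal{G}$-invariant metric $g$ together with the Haar system $\{d\rho_2^p\}$ that $g$ generates as in Subsection~\ref{subsec:Haar-system}, with mean curvature form $\theta_2$ and a cutoff function $\phi_2$. Since $\theta$ is a representative of the class $\Theta=[\theta_2]$, the discussion preceding the corollary gives a $\mathcal{G}$-invariant function $w$ with $\theta=\theta_2+dw$, and the rescaled Haar system $\{d\rho_1^p\}=\{e^{w}d\rho_2^p\}$ in $\Xi$ has mean curvature form precisely $\theta$ with compatible cutoff function $\phi_1$ satisfying $\phi_1^2=e^{-w}\phi_2^2$.

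The key point I would then verify is that these data satisfy the two properties actually invoked in the proof of Lemma~\ref{byparts-lemma}. The first is that $\phi_1$ has compact support, which holds because $W$ is compact and $w$ is $\mathcal{G}$-invariant, so $e^{-w}$ descends to a continuous function on the compact quotient and the support of $\phi_1^2=e^{-w}\phi_2^2$ coincides with that of $\phi_2^2$. The second is the cutoff relation~(\ref{cutoffprop2}) tying $\theta$ to $\phi_1$, namely that $\theta_p=-\int_{\mathcal{G}^p}(d\phi_1^2)_{s(h)}\cdot h^{-1}\,d\rho_1^p(h)$; this is exactly the identity established in Subsection~\ref{subsec:Haar-system} for the Haar system $\{d\rho_1^p\}$ with its own cutoff function $\phi_1$, since that identity was derived for an arbitrary element of $\Xi$ equipped with any of its nonnegative cutoff functions. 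Granting these two facts, the computation in the proof of Lemma~\ref{byparts-lemma} goes through verbatim with $(\theta_2,\phi_2,d\rho_2)$ replaced by $(\theta,\phi_1,d\rho_1)$, yielding~(\ref{integpart}) for $\theta$ and $\phi_1$.

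Finally, I would note that $\phi_1$ is an arbitrary nonnegative cutoff function compatible with $\theta$: any two such differ by the freedom in constructing a cutoff for the fixed Haar system $\{d\rho_1^p\}$, and since the integrals in~(\ref{integpart}) are independent of the choice of nonnegative cutoff function (by \cite{gorokhovsky}[Proposition 1], as quoted in Subsection~\ref{subsec:integration-of-Gfunctions}), the formula holds for every such $\phi$. This last independence remark is the only substantive point to check carefully, since it is what upgrades the result from the particular cutoff $\phi_1$ produced by the rescaling construction to all cutoffs compatible with $\theta$; the rest is bookkeeping. I expect the main obstacle to be precisely this verification that the cutoff relation~(\ref{cutoffprop2}) holds for the representative $\theta$ with a compatible cutoff, rather than only for the canonical pair $(\theta_2,\phi_2)$ generated by the metric, and I would address it by appealing to the construction of $\Xi$ in Subsection~\ref{subsec:evolution-formulas}, where $\{d\rho_1^p\}$ is itself realized as the Haar system of a $\mathcal{G}$- and $O(n)$-invariant rescaling of $\hat g$ along the $\hat\sigma$-fibers, so that all of Subsection~\ref{subsec:Haar-system} applies to it directly.
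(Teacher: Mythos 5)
Your proposal is correct and follows essentially the same route the paper intends: realize the chosen representative $\theta=\theta_2+dw$ via the rescaled Haar system $\{e^{w}d\rho_2\}$ with cutoff $\phi_1^2=e^{-w}\phi_2^2$, check that the compact-support property and the cutoff identity~(\ref{cutoffprop2}) still hold for these data, and rerun the proof of Lemma~\ref{byparts-lemma}, using the cutoff-independence of integrals of $\mathcal{G}$-invariant functions to pass to an arbitrary compatible cutoff. The paper leaves this as an ``easy deduction'' from the preceding discussion, and your write-up supplies exactly the verifications it has in mind.
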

	
	We can also adapt the proof of Lemma~\ref{byparts-lemma} to show that, given a smooth one-parameter family of $\mathcal{G}$-invariant metrics $g_t$, a smooth time dependent $\mathcal{G}$-invariant function $f$, a smooth family of Haar systems $\{d\rho_t \}_{p \in M}$ in $\Xi$ and a smooth compatible one-parameter family of nonnegative cutoff functions $\phi_t$, the evolution equation of the integral $ \int_{M} f \phi^2 d\mu_{g}$ is given by
	                                                                     
\begin{eqnarray}
\frac{\partial}{\partial t} \int_{M} f \phi^2 d\mu_{g} =  \int_{M} \left( \frac{\partial f}{\partial t} + \frac{V}{2} -\beta \right) \, \phi^2 d\mu_{g} \label{time-derivative}
\end{eqnarray} 
 where $V= \mathrm{Tr}_{g}(v)$, the trace of $v$ with respect to the metric $g$, and 
\begin{eqnarray*}
\frac{\partial (d\rho^{p})}{\partial t} = \beta(p)d\rho^{p}.
\end{eqnarray*}

\begin{exam} \label{groupExample}
Let $G$ be a a compact Lie group which acts isometrically and effectively on a connected compact Riemannian manifold $(M,g)$. Then, we can consider the cross-product groupoid $ M \rtimes G$ over $M$ which will be a closed groupoid. The corresponding \'etale structure comes from equipping $G$ with the discrete topology. 

As discussed in \cite{gorokhovsky}[Example 4], the Haar system $\{d \rho^{p} \}_{p \in M}$ genereated by the metric $g$ can be described as follows. Let $\{e^i \}$ be a basis of the Lie algebra $\mathfrak{g}$ of $G$ such that the normalized Haar measure is $d\mu_{G}= \bigwedge_{i}(e^i)^{\ast}$. Let $\{V^i\}$ be the corresponding Killing vector fields on $M$. The action of $V^i$ on the orthonormal frame bundle $F_{g}$ breaks up as $V^i \oplus \nabla V^{i}$ with respect to the decomposition $TF_{g}= \pi^{\ast} TM \otimes TO(n)$ of $TF_{g}$ into horizontal and vertical subbundles. The Haar system is given by $d\rho^p = \sqrt{\mathrm{det}(M(p))}d\mu_{G} $ where $M=(M_{ij})$ is a function of positive definite matrices defined by 
\begin{eqnarray*}
M_{ij}(p)=  \langle V_i(p), V_j(p) \rangle +  \langle \nabla V_i(p), \nabla V_j(p) \rangle
\end{eqnarray*}
The corresponding cutoff function is given by $\phi^2(p) = \frac{1}{\sqrt{\mathrm{det}(M(p))}}$ and the mean curvature form is $\theta= dw$ with $w = \ln\left(\sqrt{\mathrm{det}(M(p))}\right)$. Hence, the class $\Theta$ is equal to zero. Also, the family $\Xi$ of Haar systems can be naturally identified with the space of $G$-invariant smooth functionns via the map $C^{\infty}_{G}(M) \rightarrow  \Xi$ defined by $w \rightarrow e^{w} d\rho_{G}$.
Finally, Corollary~\ref{byparts-cr2} reduces to the integration by parts formula on a weighted manifold with $G$-invariant quantities.
\end{exam}

\subsection{$\mathcal{G}$-paths and $\mathcal{G}$-invariant vector fields}\label{subsec:Gvectorfields}
 As before, we let $(\mathcal{G},M,g)$ be a closed Riemannian groupoid with space of orbits $W=M/ \mathcal{G}$. A \emph{smooth} $\mathcal{G}$-\emph{path} defined on an interval $[a,b]$ and from a point $x \in M$ to a point $y \in M$ consists of a partition $a=t_o \leq t_1 \leq  \cdots \leq t_k =b$ of the interval $[a,b]$ and a sequence $c= (h_0,c_1,h_1,\cdots,c_k,h_k)$ where each $c_i$ is a smooth path $c_i: [t_{i-1},t_i] \rightarrow M$, $h_i \in \mathcal{G}$, $c_i(t_{i-1})=s(h_{i-1})$, $c_i(t_i)=r(h_i)$, $r(h_0)=x$ and $s(h_k)=y$. The groupoid is said to be $\mathcal{G}$-\emph{path-connected} if any two points of $M$ can be joined by a $\mathcal{G}$-path. 
      
			Since $M$ is equipped with the $\overline{\mathcal{G}}$-invariant metric $g$, there is a natural notion of length of a smooth $\overline{\mathcal{G}}$-path. This defines a pseudometric $d$ on the space of orbits $W$ given by saying that, for any $x,y \in M$, $d(O_{x},O_{y})$ is the infimum of the lengths of smooth $\overline{\mathcal{G}}$-paths joining $x$ to $y$. We will also assume for now on that the groupoid $(\mathcal{G},M)$ is \emph{Hausdorff} in the sense of \cite[Appendix G Definition 2.10]{bridson}: the space $\mathcal{G}$ equipped with the \'etale topology is Hausdorff and for every continuous map $c:(0,1]  \rightarrow \mathcal{G}$ such that $\lim_{t \rightarrow 0} s\circ c$ and $ \lim_{t \rightarrow 0} r\circ c$ exist, $ \lim_{t \rightarrow 0}  c$ exists. Using this property, we can prove the following lemma

\begin{lma}
 The  pseudometric $d$ on $W$ is a metric. Furthermore, the topology on $W$ induced by $d$ coincides with the quotient topology. Hence, $(W,d)$ is a length space.
 \end{lma}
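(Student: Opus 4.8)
The plan is to reduce the statement to the single identity
\[
d(\sigma(x),\sigma(y)) \;=\; d_{g}(x,O_{y}) \;:=\; \inf_{y'\in O_{y}} d_{g}(x,y'),
\]
where $d_{g}$ is the Riemannian distance function of $(M,g)$, with the convention that $d_{g}(x,y')=+\infty$ when $x$ and $y'$ lie in different connected components of $M$. One inequality is immediate: if $y'\in O_{y}$ and $\gamma$ is a smooth path from $x$ to $y'$ of length less than $d_{g}(x,y')+\varepsilon$, then $\gamma$ is in particular a smooth $\mathcal{G}$-path, and since $y'$ and $y$ lie in the same orbit we may follow it by a groupoid jump $y'\to y$ to obtain a $\mathcal{G}$-path from $x$ to $y$ of the same length. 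Letting $\varepsilon\to0$ and taking the infimum over $y'\in O_{y}$ gives $d(\sigma(x),\sigma(y))\le d_{g}(x,O_{y})$; the choice $y'=y$ shows moreover that $\sigma\colon(M,g)\to(W,d)$ is $1$-Lipschitz and hence continuous, so the metric topology $\tau_{d}$ is contained in the quotient topology $\tau_{q}$.

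The content of the lemma is the reverse inequality, and this is exactly where I would invoke completeness together with the Hausdorff hypothesis. Given a smooth $\mathcal{G}$-path $c=(h_{0},c_{1},h_{1},\dots,c_{k},h_{k})$ from $x$ to $y$, the idea is to \emph{develop} it into a single genuine path in $M$ issuing from $x$ of equal length. Realizing the germ of $h_{0}$ by a local isometry, I would continue that isometry along $c_{1}$: completeness provides sections of $s$ over uniform neighborhoods, so the continuation survives along the compact path $c_{1}$, while the Hausdorff condition guarantees that the continued $1$-jets remain elements of $\mathcal{G}$ rather than leaving $\mathcal{G}$. Transporting $c_{1}$ by this continuation, then splicing in $h_{1}$ and repeating for $c_{2},\dots,c_{k}$, produces a smooth path $\tilde{c}$ from $x$ whose endpoint $\tilde{y}$ is obtained from $y$ by groupoid elements, so $\tilde{y}\in O_{y}$; because every operation used is a local isometry, $\mathrm{length}(\tilde{c})=\sum_{i}\mathrm{length}(c_{i})=\mathrm{length}(c)$. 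Hence $d_{g}(x,O_{y})\le d_{g}(x,\tilde{y})\le\mathrm{length}(c)$, and taking the infimum over all $\mathcal{G}$-paths from $x$ to $y$ yields $d_{g}(x,O_{y})\le d(\sigma(x),\sigma(y))$, completing the identity. Making this development rigorous, in particular controlling the continuation of local isometries along a path and checking that the Hausdorff condition keeps it inside $\mathcal{G}$, is the step I expect to be the main obstacle.

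With the identity established, the three assertions follow. Since the orbits of the closed groupoid are closed submanifolds of $M$, $d(\sigma(x),\sigma(y))=0$ forces $d_{g}(x,O_{y})=0$, hence $x\in O_{y}$ and $O_{x}=O_{y}$; thus $d$ separates points and is a genuine metric, its finiteness following from the connectedness of $W$ and the path-connectedness of the local models $V/K$, which makes $\mathcal{G}$ path-connected. To obtain $\tau_{q}\subseteq\tau_{d}$ I would show that $\sigma$ is an open map: if $V\subseteq M$ is open and $B_{g}(x,\varepsilon)\subseteq V$, then any point $\sigma(y)$ with $d(\sigma(x),\sigma(y))<\varepsilon$ satisfies $d_{g}(x,O_{y})<\varepsilon$, so some $y'\in O_{y}$ lies in $B_{g}(x,\varepsilon)\subseteq V$ and $\sigma(y)=\sigma(y')\in\sigma(V)$; hence $B_{d}(\sigma(x),\varepsilon)\subseteq\sigma(V)$ and $\sigma(V)$ is $d$-open. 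A continuous open surjection forces $\tau_{d}=\tau_{q}$. Finally, each smooth $\mathcal{G}$-path $c$ projects to a continuous path $\sigma\circ c$ in $W$, since the jumps $h_{i}$ connect points of a common orbit and therefore do not break continuity; as $\sigma$ is $1$-Lipschitz on each piece $c_{i}$ and the jumps contribute zero $d$-length, the $d$-length of $\sigma\circ c$ is at most $\mathrm{length}(c)$. Together with the general bound $d\le\ell_{d}$ and the definition of $d$ as an infimum of lengths of $\mathcal{G}$-paths, this gives $d(\sigma(x),\sigma(y))=\inf_{\gamma}\ell_{d}(\gamma)$, so $(W,d)$ is a length space.
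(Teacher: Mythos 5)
Your reduction to the identity $d(\sigma(x),\sigma(y))=d_{g}(x,O_{y})$ is where the argument breaks: that identity is false in general, because the unit space $M$ of an \'etale groupoid is typically a disjoint union of small charts and in particular need not be connected or complete. For a concrete failure, cover a circle by two overlapping arcs and let $M$ be their disjoint union with $\mathcal{G}$ the germ groupoid of the identifications on the overlaps; for $x$ in one arc and $y$ in the other, both away from the overlaps, one has $d(\sigma(x),\sigma(y))<\infty$ while $O_{y}$ is entirely contained in the component of $M$ not containing $x$, so $d_{g}(x,O_{y})=+\infty$. The underlying obstruction is exactly in your ``development'' step: when you continue the local isometry realizing $h_{0}$ along $c_{1}$ and transport the path back to start at $x$, the developed path lives in $M$ and can run off the edge of the chart containing $x$ before $c_{1}$ is exhausted. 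The continuation survives only as long as it stays in a region where limits exist, i.e.\ only the \emph{local} statement is true: if $\overline{B}(x,\epsilon)$ is complete as a metric space and $d(\sigma(x),\sigma(y))<\epsilon$, then $O_{y}$ meets $B(x,\epsilon)$. This local version is precisely \cite{bridson}[Appendix G, Lemma 2.14], which is what the paper invokes (three times) instead of any global identity; note also that you yourself flagged the development as the step you had not carried out, so even the local version is not actually established in your write-up.

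The good news is that the rest of your argument only ever needs the local statement, and with that substitution it lines up with the paper's proof. For separation of points: $d(O_{x},O_{y})=0$ plus completeness of some $\overline{B}(x,\epsilon)$ gives points of $O_{y}$ in every small ball around $x$, hence $x\in\overline{O_{y}}=O_{y}$ since orbits of the closed groupoid are closed. For openness of $\sigma$ (equivalently $\tau_{q}\subseteq\tau_{d}$): you must first shrink $\epsilon$ so that $\overline{B}(x,\epsilon)$ is complete \emph{and} contained in $\sigma^{-1}(U)$ before concluding that every orbit $d$-close to $O_{x}$ meets $B(x,\epsilon)$; your version skips this because the false global identity hides the hypothesis. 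Your $1$-Lipschitz observation for $\tau_{d}\subseteq\tau_{q}$ and your length-space argument are fine and do not depend on the problematic direction. So the fix is to replace the global identity by the cited local lemma (or prove the local development carefully, with the completeness of the small closed ball doing the work of keeping the continuation inside $M$ and the Hausdorff condition keeping the limiting jets in $\mathcal{G}$), and to insert the completeness-of-small-balls hypothesis at each of the two places it is used.
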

 \begin{proof}
 Suppose that $x$ and $y$ are elements of $M$ such that $d(O_{x},O_{y})=0$. Let $\epsilon >0$ be small enough so that the closed ball $\overline{B}(x,\epsilon)$ is complete as a metric space. It follows from \cite[Appendix G Lemma 2.14]{bridson} that $\overline{B}(x,\epsilon)$ will contain an element of $O_{y}$. We can then construct a sequence $y_i \in O_{y}$ such that $y_i \rightarrow x$. Since $O_{y}$ is a closed subset of $M$, we must have $x \in O_{y}$. Hence, $O_{x}=O_{y}$. The pseudometric $d$ is indeed a metric.

 Now, suppose that $x \in M$ with orbit $O_x \in W$. Let $B(O_x,b)$ be a ball of radius $b$ with respect to the metric $d$ on $W$. We want to show that $B(O_x,b)$ is open in $W$ with respect to the quotient topology or, equivalently, that $\sigma^{-1}(B(O_x,b))$ is open in $M$. If $y \in \sigma^{-1}(B(O_x,b))$, then, by definition of the metric $d$, there exists a smooth $\mathcal{G}$-path $c$ of length less than $b$ joining $x$ to $y$. Then, it is clear that, for $\epsilon >0$ small enough, the ball $B(y,\epsilon)$ is a subset of  $\sigma^{-1}(B(O_x,b))$ since for any $q \in B(y,\epsilon)$, we can join $x$ to $q$ by a smooth $\mathcal{G}$-path of length less than $b$ obtained by ``concatenating'' the smooth $\mathcal{G}$-path $c$ with a geodesic joining $y$ to $q$. This proves that $\sigma^{-1}(B(O_x,b))$ is open in $M$ and hence $B(O_x,b)$ is open in $W$ with respect to the quotient topology.
    
    Now suppose $U \subset W$ is open with respect to the quotient topology. Then $\sigma^{-1}(U)$ is open in $M$. Let $x$ be an element of $\sigma^{-1}(U)$ and let $\epsilon >0$ be small enough so that $\overline{B}(x,\epsilon)$ is complete and contained in $\sigma^{-1}(U)$. Then, it follows again from \cite[Appendix G Lemma 2.14]{bridson} that every orbit in $B(O_{x},\epsilon)$ intersects the ball $B(x,\epsilon)$. In other words, $B(O_{x},\epsilon)=\sigma\left(B(x,\epsilon)\right) \subset U$. Hence, $U$ is open in $W$ with respect to the metric topology. The metric topology coincides with the quotient topology. Finally, it follows from \cite[Part I Lemma 5.20]{bridson} that $(W,d)$ is a length space. This completes the proof.
 \end{proof}

\begin{crly}
 $(\mathcal{G},M,g)$ is $\mathcal{G}$-path-connected if and only if the quotient space $W$ is connected.
 \end{crly}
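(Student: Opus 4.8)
The plan is to prove the two implications separately, the first being essentially formal and the second resting on the clopen decomposition of $M$ into $\mathcal{G}$-path components. Throughout I will use the previous lemma, which identifies the quotient topology on $W$ with the metric topology of $(W,d)$, so that I may freely pass between metric balls in $M$ and open sets in $W$.

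For the forward direction, suppose $(\mathcal{G},M,g)$ is $\mathcal{G}$-path-connected and let $x,y \in M$. Take a $\mathcal{G}$-path $c=(h_0,c_1,h_1,\dots,c_k,h_k)$ joining $x$ to $y$. The key observation is that each groupoid element $h_i$ satisfies $\sigma(s(h_i))=\sigma(r(h_i))$, since $s(h_i)$ and $r(h_i)$ lie in the same orbit. Using the defining relations $c_i(t_{i-1})=s(h_{i-1})$ and $c_i(t_i)=r(h_i)$ together with $r(h_0)=x$ and $s(h_k)=y$, the projected curves $\sigma\circ c_1,\dots,\sigma\circ c_k$ fit together endpoint to endpoint into a single continuous path in $W$ running from $\sigma(x)$ to $\sigma(y)$. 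Hence $W$ is path-connected and in particular connected.

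For the converse, I would fix $x\in M$ and set $S_x=\{\, y\in M : y \text{ is joinable to } x \text{ by a } \mathcal{G}\text{-path}\,\}$; being $\mathcal{G}$-path-joinable is an equivalence relation (reverse a path for symmetry, concatenate for transitivity), so the sets $S_z$ partition $M$. Two properties drive the argument. First, $S_x$ is \emph{saturated}: if $y\in S_x$ and $y'\in O_y$, choose $h\in\mathcal{G}$ realizing the orbit relation and append it to a $\mathcal{G}$-path from $x$ to $y$, so $y'\in S_x$. Second, $S_x$ is \emph{open}: for $y\in S_x$ pick $\epsilon>0$ so small that every $q\in B(y,\epsilon)$ is joined to $y$ by a minimizing geodesic, which is a smooth $\mathcal{G}$-path with trivial groupoid jumps; concatenating it with a path from $x$ to $y$ shows $B(y,\epsilon)\subset S_x$. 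Applying the same reasoning to every class $S_z$ with $z\notin S_x$ shows that the complement $M\setminus S_x=\bigcup_{z\notin S_x}S_z$ is also open and saturated.

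Because $S_x$ and its complement are saturated and open in $M$, their images under $\sigma$ are open and disjoint in $W$, exhibiting $\sigma(S_x)$ as a nonempty clopen subset; connectedness of $W$ then forces $\sigma(S_x)=W$, whence $S_x=M$ and $(\mathcal{G},M,g)$ is $\mathcal{G}$-path-connected. I expect the main point requiring care to be the converse, specifically the verification that the $\mathcal{G}$-path components are simultaneously open and saturated: openness uses the local geodesic structure of $(M,g)$ (smooth paths inside a normal ball are legitimate $\mathcal{G}$-paths), while saturation uses that appending a single groupoid element to a $\mathcal{G}$-path keeps it a $\mathcal{G}$-path. Once both are in hand, the clopen/connectedness step is immediate.
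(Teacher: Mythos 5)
Your argument is correct: the paper states this corollary without proof, and your clopen-decomposition of $M$ into saturated, open $\mathcal{G}$-path components (openness via small geodesic balls, saturation via appending a groupoid element) is exactly the argument the preceding lemma is set up to support, being equivalent to observing that the sets where the extended metric $d$ is finite are clopen in $W$. Both directions check out, including the composability of consecutive groupoid elements needed for concatenation and reversal of $\mathcal{G}$-paths.
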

 
Let $X$ be a $\mathcal{G}$-invariant vector field on the manifold $M$. Hence, under the right action of $\mathcal{G}$ on $TM$, we have $X_{r(h)} \cdot h = X_{s(h)}$. We can assign to such a vector field a collection of smooth $\mathcal{G}$-paths defined up to equivalence. Recall that two smooth $\mathcal{G}$-paths $c_1$ and $c_2$ defined on the same interval $[a,b]$ are said to be equivalent if one can pass from one to the other by performing the following operations:
\begin{enumerate}
\item Suppose $c= (h_0,c_1,h_1,\cdots,c_k,h_k)$ is a $\mathcal{G}$-path with the subdivision $a=t_o \leq t_1 \leq  \cdots \leq t_k =b$. We get a new $\mathcal{G}$-path by adding a new  subdivison point $t_i^{\prime} \in [t_{i-1},t_i]$ together with the unit element $g_{i}^{\prime}=1_{c_i(t_i^{\prime})}$. We replace $c_i$ by $c_i^{\prime}$, $g_i^{\prime}$ and $c_i^{\prime \prime}$, where $c_i^{\prime}$ and $c_i^{\prime \prime}$ are the restrictions of $c_i$ to $[t_{i-1},t_i^{\prime}]$ and $[t_i^{\prime},t_i]$ respectively.
\item We replace a $\mathcal{G}$-path $c$ by a new path $c^{\prime}=(h_0^{\prime},c_1^{\prime},h_1^{\prime},\cdots,c_k^{\prime},h_k^{\prime})$ over the same division as follows: for each $i =1, \cdots,k$ , choose a smooth path  $\gamma_{i}:[t_{i-1},t_{i}] \rightarrow \mathcal{G}$ with respect to the \'etale structure on $(\mathcal{G},M)$ such that $s(\gamma_{i}(t))=c_{i}(t)$ and define $c_{i}^{\prime}(t)=r(\gamma_{i}(t))$, $h_{i}^{\prime}=\gamma_{i}(t_i)h_i\gamma_{i+1}(t_i)^{-1}$ for $i =1, \cdots,k-1$ , $h_{0}^{\prime}=h_{0}h_{1}^{-1}(t_{0})$ and $h_{k}^{\prime}= \gamma_{k}(t_{k})h_{k}.$
\end{enumerate}

 The smooth $\mathcal{G}$-paths defined by a $\mathcal{G}$-invariant vector field $X$ will be of the form $c= (h_0,c_1,h_1,\cdots,c_k,h_k)$ where, for each $i=1,\cdots,k$, the smooth path $c_i:[t_{i-1},t_{i}] \rightarrow M$ is an integral curve of the vector field $X$. It is also clear that if $\alpha_{i} : U \rightarrow M$ is a local section of $s$ at $h_i$, then, for some small enough $\epsilon >0$, the path $c_{i}^{\prime}: [t_{i-1},t_{i}+\epsilon] \rightarrow M$ given by
 \begin{eqnarray*}
 c_{i}^{\prime}(t)=\begin{cases}
c_{i}(t) &  \,\text{if} \, \,t_{i-1} \leq t\leq t_i; \\
 r \circ \alpha_{i} \circ c_{i+1} (t), & \, \text{if} \, \, t_i \leq t \leq t_{i} + \epsilon.
\end{cases} 
 \end{eqnarray*}
 is an integral curve of the vector field $X$. A $\mathcal{G}$-path $c$ generated by $X$ and defined on an interval $[a,b]$ will induce a continuous path $\tilde{c}:[a,b] \rightarrow W$. Any other smooth $\mathcal{G}$-path generated by $X$ and defined on the same interval $[a,b]$ will induce the same path in $W$ if and only if it is equivalent to $c$. With abuse of terminology, we will also call these induced paths the integral curves of the vector field $X$ on $W$ and the image of these curves will be called the flow lines of $X$ on $W$. It is clear that there is a unique flow line passing through any element of $W$. The following lemma is a simple generalization of the ``Escape Lemma'' for vector fields on smooth manifolds.

\begin{lma}
Suppose $\gamma$ is an integral curve of $X$ on $W$. If the maximal domain of definition of $\gamma$ is not all of $\mathbb{R}$, then the image of $\gamma$ cannot lie in any compact subset of $W$.
\end{lma}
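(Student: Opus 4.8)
The plan is to prove the contrapositive: if the image of $\gamma$ lies in some compact set $K \subset W$, then the maximal domain of $\gamma$ is all of $\mathbb{R}$. Replacing $X$ by $-X$ interchanges the two ends of the domain, so it suffices to rule out a finite right endpoint. Thus I would suppose the maximal domain is $(a,b)$ with $b<\infty$ and $\gamma\big((a,b)\big)\subset K$, and aim to extend $\gamma$ beyond $b$, contradicting maximality. Throughout I use the description of integral curves of $X$ on $W$ established above: such a curve through a point $\sigma(p)$ is the $\sigma$-projection of an integral curve of the vector field $X$ on the manifold $M$ starting at a lift $p$; by $\mathcal{G}$-invariance of $X$ the projected curve is independent of the chosen lift, and there is a unique flow line through each point of $W$.

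The heart of the argument is a \emph{uniform} lower bound on the existence time of integral curves of $X$ on $W$ through points of $K$. For each $w\in K$ I would pick a lift $p\in\sigma^{-1}(w)$ and apply the ordinary local existence theorem for the ODE $\dot c = X$ on $M$: this produces an open neighborhood $V_p\ni p$ and an $\epsilon_p>0$ so that the integral curve of $X$ on $M$ through every $q\in V_p$ is defined on $(-\epsilon_p,\epsilon_p)$. Since the quotient map $\sigma$ is open — the saturation $\sigma^{-1}(\sigma(V_p))$ is a union of images of $V_p$ under the local diffeomorphisms generated by $\mathcal{G}$, hence open — each $\sigma(V_p)$ is an open neighborhood of $w$, and projecting the local flow shows that the integral curve of $X$ on $W$ through every point of $\sigma(V_p)$ is defined on $(-\epsilon_p,\epsilon_p)$. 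The family $\{\sigma(V_p)\}_{w\in K}$ covers the compact set $K$; choosing a finite subcover and setting $\epsilon=\min_i \epsilon_{p_i}>0$, I conclude that the integral curve of $X$ on $W$ through \emph{any} point of $K$ is defined at least on $(-\epsilon,\epsilon)$.

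With this uniform time in hand, I would choose $t_0\in(a,b)$ with $b-t_0<\epsilon$ and set $w_0=\gamma(t_0)\in K$. The integral curve $\tilde\gamma$ of $X$ on $W$ with $\tilde\gamma(t_0)=w_0$ is then defined on $(t_0-\epsilon,\,t_0+\epsilon)$, and $t_0+\epsilon>b$. By uniqueness of the flow line through $w_0$ (lift to $M$ at a common base point and invoke ODE uniqueness there), $\tilde\gamma$ agrees with $\gamma$ wherever both are defined, so they concatenate to an integral curve on $(a,\,t_0+\epsilon)$ that extends $\gamma$ strictly past $b$. This contradicts the maximality of $(a,b)$, forcing $b=\infty$; the symmetric argument for $-X$ gives $a=-\infty$, which establishes the contrapositive and hence the lemma.

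The main obstacle, and the reason this is a genuinely groupoid-theoretic statement rather than an immediate citation of the manifold Escape Lemma, is the noncompactness of $M$. One cannot simply lift $\gamma$ to a single integral curve of $X$ on $M$ and apply the classical lemma: even though $\gamma$ stays in the compact set $K\subset W$, its lift may leave every compact subset of $M$ in finite time (for instance when $M$ is an infinite cover of a compact quotient). The resolution is precisely the freedom to \emph{re-lift}: at the time $t_0$ I select a fresh lift $p$ of $\gamma(t_0)$ at which short-time existence is guaranteed, and the $\mathcal{G}$-invariance of $X$ guarantees that flowing from this new lift reproduces the same curve downstairs. Verifying the openness of $\sigma$ and the compatibility of the local flows under $\sigma$ are the only points requiring care, and both follow directly from the étale structure and $\mathcal{G}$-invariance already set up in this section.
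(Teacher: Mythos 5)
Your argument is correct, but it takes a genuinely different route from the paper. You prove the escape lemma via a \emph{uniform local existence time}: cover the compact set $K$ by projections $\sigma(V_p)$ of flow boxes in $M$, extract a finite subcover to get a single $\epsilon>0$ working for every point of $K$, then re-lift once at a time $t_0$ with $b-t_0<\epsilon$ and glue using uniqueness of flow lines on $W$. The paper instead argues metrically: it extracts a subsequential limit orbit $O_p$ of $\gamma(t_j)$ as $t_j\nearrow b$, uses the Hausdorff assumption (via Bridson--Haefliger, Appendix G, Lemma 2.14) to find representatives of $\gamma(t_{j_0})$ in a small ball $B(p,\epsilon/2)$ whose closure $\overline{B}(p,\epsilon)$ is complete, bounds $|X|\leq C$ on $\sigma^{-1}(K)$ so that the re-lifted integral curve has length at most $\epsilon/2$ and hence stays in that complete closed ball, and then invokes the classical Escape Lemma on $M$. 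Your version is more elementary and more general: it needs only the \'etale structure (openness of $\sigma$, which you correctly justify via saturation), compactness of $K$, the standard flow-box theorem on $M$, and the $\mathcal{G}$-invariance of $X$ to make re-lifting consistent --- no Riemannian metric, no length estimates, and no appeal to completeness of small closed balls. What the paper's route buys is consistency with the metric framework it has already built (the length structure on $W$ and the completeness of small balls are used elsewhere in the section), and it quantifies the extension in terms of the speed bound $C$; but as a proof of this particular lemma your approach is self-contained and, if anything, cleaner. Both proofs lean on the same implicitly asserted fact --- uniqueness of the integral curve on $W$ through a given point, proved by lifting at a common base point and inducting over the segments of a representing $\mathcal{G}$-path --- so you are not assuming anything the paper does not also take for granted.
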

 \begin{proof}
 Let $(a,b)$ denote the maximal domain of $\gamma$ and assume that $b < \infty$ but $\gamma(a,b)$ lies in a compact set $K \subset W$. We will show that $\gamma$ can be extended past $b$, thus contradicting the maximality of $(a,b)$. The case $a > -\infty$ is similar. Given any sequence of times $\{t_j\}$ such that $t_{j} \nearrow b$, the sequence $\{\gamma(t_j)\}$ lies in $K$. After passing to a subsequence, we may assume that $\{\gamma(t_j)\}$ converges to the orbit $O_p$ for some $p \in M$. By \cite[Appendix G Lemma 2.14]{bridson} , we can choose $\epsilon >0$ small enough so that the closed ball $\overline{B}(p,\epsilon)$ is complete and such that the points $\gamma(t_j)$ will have a representative in $B(p,\frac{\epsilon}{2})$  for $j$ large enough.
   
	Since $X$ is $\mathcal{G}$-invariant, we can view $|X|=|X|_{g}$ as a function on $W$. Since $K$ is compact, we will have $|X| \leq C$ on $\sigma^{-1}(K)$ for some positive constant $C$. So, if $c= (h_0,c_1,h_1,\cdots,c_k,h_k)$ is a $\mathcal{G}$-path representing $\gamma$ on any subinterval $[d,e]$ of $(a,b)$, we will have $|\dot{c_i}(t)|\leq C$ for any $t \in [t_{i-1},t_i]$. Let $j_{0}$ large enough so that the point  $\gamma(t_{j_{0}})$ has a representative in $B(p,\frac{\epsilon}{2})$ and such that $C\cdot(b-t_{j_{0}}) \leq \frac{\epsilon}{2}$. We now choose a smooth $\mathcal{G}$-path $c= (h_0,c_1,h_1,\cdots,c_k,h_k)$ representing $\gamma$ on the interval $[0,t_{j_{0}}]$. There exists a point $q$ in $O_{c_{k}(t_{j_{0}})}$ which lies in $B(p,\frac{\epsilon}{2})$. Let $c^{\prime}$ be the integral curve of $X$ such that $c^{\prime}(t_{j_{0}})=q$. This integral curve must be defined on an interval bigger than $[t_{j_{0}},b)$. Otherwise, we would have $\mathrm{Length}(c^{\prime}) \leq C \cdot(b-t_{j_{0}}) \leq \frac{\epsilon}{2}$  and thus $c^{\prime}([t_{j_{0}},b))$ would lie in $\overline{B}(p,\epsilon)$, thus contradicting the ``Escape Lemma''. So, $c^{\prime}$ is defined on $[t_{j_{0}},b+ \epsilon^{\prime})$ for some $\epsilon^{\prime} >0$. We can then extend the smooth $\mathcal{G}$-path $c$ by ``concatenating'' it with the path $c^{\prime}$. This proves that $\gamma$ can be extended past $b$, contradicting the maximality of $(a,b)$.
\end{proof}

\begin{crly}
If the space of orbits $W$ is compact, the integral curves of $X$ on $W$ are defined for all time. In fact, for any $p \in M$ and $t \in \mathbb{R}$, there exists a smooth $\mathcal{G}$-path $c= (h_0,c_1,h_1,\cdots,c_k,h_k)$ generated by $X$, defined on the interval $[0,t]$ if $t >0$ (or $[t,0]$ if $t < 0$) and satisfying $r(h_{0})=p$ (or $s(h_{k})=p$).
\end{crly}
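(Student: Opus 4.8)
The plan is to deduce the first assertion at once from the preceding (escape) lemma, and then to upgrade it to the concrete statement about $\mathcal{G}$-paths by a connectedness argument on the interval $[0,t]$. For the all-time existence, let $\gamma$ be any integral curve of $X$ on $W$ with maximal domain $(a,b)$. Its image lies in $W$, which is compact by hypothesis, so the contrapositive of the preceding lemma forces $(a,b)=\mathbb{R}$. Thus every integral curve of $X$ on $W$ is defined for all time.

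For the second, more explicit assertion, fix $p\in M$ and $t>0$ (the case $t<0$, with $s(h_k)=p$, being entirely symmetric). Let $\gamma$ be the integral curve of $X$ on $W$ with $\gamma(0)=O_p$, which by the first part is defined on all of $[0,t]$. I would introduce the set $S\subset[0,t]$ of those $\tau$ for which there exists a smooth $\mathcal{G}$-path $c=(h_0,c_1,h_1,\dots,c_k,h_k)$ generated by $X$, defined on $[0,\tau]$, with $r(h_0)=p$, and inducing $\gamma|_{[0,\tau]}$, and then show $S=[0,t]$. The set $S$ is nonempty because the integral curve of $X$ on $M$ through $p$ exists for a short time, giving such a path on $[0,\delta]$ with $h_0=1_p$. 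It is relatively open on the right: if $\tau\in S$ with $\tau<t$, the terminal point $s(h_k)$ of the corresponding path lies over $\gamma(\tau)$, the integral curve of $X$ on $M$ through $s(h_k)$ exists for some further time, and concatenating it (via the equivalence operations) extends the path, so a half-neighborhood of $\tau$ lies in $S$.

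The closedness of $S$ is where I expect the real work to be, and it is exactly the escape-lemma phenomenon already handled just above. Given $\tau_j\nearrow\tau$ in $S$, continuity of $\gamma$ gives $\gamma(\tau_j)\to\gamma(\tau)$ in $W$; using \cite[Appendix G Lemma 2.14]{bridson} I would fix a representative $q$ of $\gamma(\tau)$ and a small complete closed ball $\overline{B}(q,\epsilon)$ so that, for $j$ large, $\gamma(\tau_j)$ has a representative in $B(q,\tfrac{\epsilon}{2})$. Since $X$ is $\mathcal{G}$-invariant, $|X|$ descends to $W$ and is bounded, say by $C$, on the compact set $W=\sigma^{-1}(W)$; choosing $j_0$ with $C\cdot(\tau-\tau_{j_0})\le\tfrac{\epsilon}{2}$, the integral curve of $X$ on $M$ issuing from the chosen representative cannot leave $\overline{B}(q,\epsilon)$ before time $\tau$, so it is defined up to $\tau$ and, concatenated with the $\mathcal{G}$-path reaching $\tau_{j_0}$, yields a $\mathcal{G}$-path on $[0,\tau]$; here the Hausdorff hypothesis on $\mathcal{G}$ guarantees that the requisite limiting groupoid element exists. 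Hence $\tau\in S$, and by connectedness of $[0,t]$ we get $S=[0,t]$, in particular $t\in S$.

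Finally, the normalization $r(h_0)=p$ is cheap: if the constructed path starts at $r(h_0)=q'$ with $q'\in O_p$, I choose $k\in\mathcal{G}$ with $s(k)=q'$ and $r(k)=p$ and replace $h_0$ by $kh_0$, which leaves every $c_i$ and the induced curve in $W$ unchanged while giving $r(h_0)=p$. The main obstacle throughout is precisely the closedness step, namely ruling out that the underlying $M$-integral curve escapes to infinity as $\tau_j\nearrow\tau$; this is prevented by the same combination of compactness of $W$, the uniform velocity bound, and the completeness/Hausdorff input from \cite{bridson} that drives the preceding lemma.
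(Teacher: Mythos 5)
Your argument is correct and follows exactly the route the paper intends: the paper states this corollary without proof as an immediate consequence of the preceding escape lemma, whose contrapositive gives all-time existence when $W$ is compact, and whose definition of integral curves on $W$ (as paths induced by $\mathcal{G}$-paths generated by $X$) yields the explicit statement. Your open-closed argument for the second assertion essentially re-runs the proof of the escape lemma and is sound, though it is more than the paper requires.
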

 Assume that $W$ is compact and connected. We will show that the $\mathcal{G}$-invariant vector field $X$ generates a self-equivalence for each $t \in \mathbb{R}$. First, we recall the notion. Let $\mathbf{H}$ be the pseudogroup of an effective \'etale groupoid $(\mathcal{G},M)$. So the elements of $\mathbf{H}$  are the locally defined diffeomorphisms of the form $r\circ \alpha$ where $\alpha$ is a local section of $s$. A differentiable equivalence between two effective \'etale groupoids $(\mathcal{G}_1,M_1)$ and $(\mathcal{G}_2,M_2)$ is a maximal collection $\Psi =\{\psi_{i}:U_{i} \rightarrow V_{i} \}_{i \in \Lambda}$ of diffeomorphisms of open subsets of $M_1$ to open subsets of $M_2$ such that:
\begin{enumerate}
\item $\{U_{i}\}_{i \in \Lambda}$ and $\{V_{i}\}_{i \in \Lambda }$ are open coverings of $M_1$ and $M_2$ respectively.
\item If $\psi \in \Psi, \tau_1 \in \mathbf{H}_1$ and $\tau_2 \in \mathbf{H}_2$, then $\tau_2 \circ \psi  \circ \tau_1 \in \Psi$.
\item If $\psi,\psi^{\prime} \in \Psi, \tau_1 \in \mathbf{H}_1$ and $\tau_2 \in \mathbf{H}_2$, then $\psi^{\prime} \circ \tau_1 \circ \psi^{-1} \in \mathbf{H}_2$ and $\psi^{-1} \circ \tau_2 \circ \psi^{\prime} \in \mathbf{H}_1$.
\end{enumerate} 
It is clear that a differentiable equivalence induces a weak equivalence in the sense of \cite{moerdijk}[Chapter 5.4]. If $(\mathcal{G}_1,M_1)=(\mathcal{G}_2,M_2)$, we say that $\Psi$ is a self-equivalence.
       Now, suppose that $(\mathcal{G},M,g)$ is a closed Riemannian groupoid with compact connected orbit space and let $\Psi=\{\psi\}_{i \in \Lambda}$ be a self-equivalence. We can define the pull back $\Psi^{\ast}\omega$ of a $\mathcal{G}$-invariant tensor $\omega$ as follows. For any $x \in M$, choose a $\psi_{i} \in \Psi$ such that $x$ is in the domain of $\psi_{i}$. We set $(\Psi^{\ast}\omega)_{x}=(\psi_{i}^{\ast}\omega)_{x}$. This does not depend on the choice of $\psi_{i}$. Furthermore, the tensor $\Psi^{\ast}\omega$ is $\mathcal{G}$-invariant. Applying this to the $\mathcal{G}$-invariant Riemannian metric $g$, we obtain another $\mathcal{G}$-invariant Riemannian metric $\Psi^{\ast}g=\tilde{g}$. If we denote by $d$ and $\tilde{d}$ the metrics generated by $g$ and $\tilde{g}$ on $W$, we see that $\Psi$ induces an isometry $(W,\tilde{d}) \rightarrow (W,d)$ which we will also call $\Psi$ for simplicity. This isometry satisfies $\Psi^{\ast} d\eta(g)=d\eta(\tilde{g})$. Indeed, note that the self equivalence $\Psi$ induces a differentiable equivalence between the groupoids $(F_{\tilde{g}} \rtimes \mathcal{G}, F_{\tilde{g}})$ and $(F_{g} \rtimes \mathcal{G}, F_{g})$ where $F_{\tilde{g}}$ and $F_{g}$ are the orthonormal frame bundles generated by $\tilde{g}$ and $g$ respectively. This differentiable equivalence will pull back the Riemannian metric on $F_{g}$ to the Riemannian metric on $F_{\tilde{g}}$ and it will be ``equivariant'' with respect to the $O(n)$-action on these spaces. Furthermore, it will induce an isometry $\tilde{Z} \rightarrow Z$ where $\tilde{Z}$ and $Z$ are the spaces of orbits of $F_{\tilde{g}}$ and $F_{g}$ under the groupoid action. Now, if we recall the definition of the measure $d\eta(g)$, we see that $\Psi^{\ast} d\eta(g)=d\eta(\tilde{g})$.
			
			Finally, based on the definition of the family $\Xi$, we see that we can define the pullback of an element of $\Xi$ under a self-equivalence $\Psi$. More precisely, if we write an element $\rho$ of $\Xi$ as $d\rho =e^{w}d\rho_{g}$ where $w$ is a smooth $\mathcal{G}$ invariant function and where $d\rho_{g}$ is the Haar system generated by an invariant metric $g$, then $\Psi^{\ast}d\rho= e^{\Psi^{\ast}w}d\rho_{\Psi^{\ast}g}$. It is clear that, if $\theta$ is the mean curvature form associated to $d\rho$, then $\Psi^{\ast}\theta$ is the mean curvature form for $\Psi^{\ast}d\rho$. We can also see that, if $\phi$ is any nonnegative cutoff function for a Haar system $\rho \in \Xi$ and if $\underline{\phi}$ is any nonnegative cutoff function for the pullback $\Psi^{\ast}d\rho$, then
\begin{eqnarray*}
	\int_{M} f \, \phi^2 d\mu_{g} = \int_{M} \Psi^{\ast}f  \,\, \underline{\phi}^2 d\mu_{\Psi^{\ast}g}
\end{eqnarray*}
for any $\mathcal{G}$-invariant smooth function $f$ and any $\mathcal{G}$-invariant smooth metric $g$.

			Given a $\mathcal{G}$-invariant vector field $X$, we get a one-parameter family of self equivalences in an obvious way. First, for each $t \in \mathbb{R}$, we denote by $\gamma_{t}$ the flow map of $X$ on $M$ at time $t$. More precisely, a point $p \in M$ is in the domain of $\gamma_{t}$ if there is, in the regular sense, an integral curve $c$ of $X$ on $M$ defined on $[0,t]$ (assuming $t\geq 0$) and starting at $p$. We then take $\gamma_{t}(p)=c(t)$. The domain of $\gamma_{t}$ is an open subset of $M$. Now  suppose $c= (h_0,c_1,h_1,\cdots,c_k,h_k)$ is a smooth $\mathcal{G}$-path generated by $X$, defined on the interval $[0,t]$, starting at a point $p \in M$ and with the subdivision $0=t_o \leq t_1 \leq  \cdots \leq t_k =t$ . For each $i$, we let $\tau_{i}$ be the diffeomorphisms of open subsets given by $r\circ \alpha_{i}$ where $\alpha_{i}$ is a local section of $s$ at $h_{i}$. We then define a diffeomorphism $\psi_{c}$ on an open neighborhood of $p$ by
	\begin{eqnarray*}
	\psi_{c}=	\tau_{0}^{-1} \circ \gamma_{t_1} \circ \tau_{1}^{-1} \circ \gamma_{(t_2-t_1)} \circ \cdots \tau_{k-1}^{-1}\circ\gamma_{(t-t_{k-1})}\circ \tau_{k}^{-1}.
	\end{eqnarray*}
	The diffeomorphism $\psi_{c}$ is just a composition of flow maps of the vector field $X$ and of local diffeomorphisms generated by the groupoid. It is clear that $\psi_{c}(p)=s(h_{k})$. The collection of all such locally defined diffeomorphism $\psi_{c}$ defines a self equivalence $\Psi_{t}$. It is also clear that $\Psi_{0}=\mathbf{H}$ and, for any $t_1,t_2 \in \mathbb{R}$, we have $\Psi_{t_1+t_2}= \Psi_{t_1} \circ \Psi_{t_2}$ where  $\Psi_{t_1} \circ \Psi_{t_2}$ denotes the collection of locally defined diffeomorphisms of the form $\psi_{c} \circ \psi_{c^{\prime}}$ where $\psi_{c} \in \Psi_{t_1}$ and $\psi_{c^{\prime}} \in \Psi_{t_2}$. We can easily show that the formula 
	\begin{eqnarray*}
	\frac{d}{dt}\arrowvert_{t=t_0} \Psi_{t}^{\ast}\omega= \Psi_{t_0}^{\ast}\left(\mathcal{L}_{X}\omega\right)
	\end{eqnarray*}
holds for any $\mathcal{G}$-invariant tensor $\omega$. Finally, note that the results listed in this section can easily be extended to the case of smooth time dependent $\mathcal{G}$-invariant vector fields $X=X(t)$. 

\subsection{$\mathcal{G}$-geodesics}
 A $\mathcal{G}$-geodesic on a Riemannian groupoid $(\mathcal{G},M,g)$ is a $\mathcal{G}$-path $c=(h_0,c_1,h_1, \cdots,c_k,h_k)$ over a subdivision $ a=t_{0} < \cdots < t_{k} =b$ of an interval $[a,b]$ satisfying the following conditions for $i=1,\cdots,k$.
\begin{enumerate}
\item $c_{i}: [t_{i-1},t_{i}] \rightarrow M$ is a constant speed local geodesic.
\item If $\alpha_i:U \rightarrow \mathcal{G}$ is a local section of $s$ at $h_i$, then for $\epsilon >0$ small enough, the concatenation of $c_{i}|_{[t_i-\epsilon,t_i]}$ and $r \circ \alpha_{i} \circ c_{i+1}|_{[t_{i}, t_{i}+\epsilon]}$ is a constant speed local geodesic.
\end{enumerate}
Any $\mathcal{G}$-path that is equivalent to a $\mathcal{G}$-geodesic is a $\mathcal{G}$-geodesic. Furthermore, if the induced length space structure on the space of orbits $W$ is complete, we can use the same type of arguments as in the previous subsection to prove that any $\mathcal{G}$-geodesic $c(t)$ is defined for all $t \in \mathbb{R}$ exist. We can define the notion of Jacobi fields along $\mathcal{G}$-geodesics and the notion of conjugate points in the obvious way. Also, one can check that the index lemma and Rauch's comparison theorem can be easily extended to this context. In particular, if the sectional curvature of the $\mathcal{G}$ invariant Riemannian metric $g$ is bounded above by a constant $K >0$, then there are no conjugate points along $\mathcal{G}$-geodesics with length less than equal to $\frac{\pi}{\sqrt{K}}$.

     Let $p$ be some point in $M$. If we assume as before that the induced metric space structure on the space $W$ is complete, then we can generalize the notion of the exponential map by defining $\mathrm{Exp}_{p}$ to be a collection of maps $\phi : U \rightarrow V$ where $U$ is an open subset of the tangent space $T_{p}M$, $V$ is an open subset of $M$ and $\psi$ is a map that corresponds to flowing along a $\mathcal{G}$-geodesic. More precisely, such a map $\psi$ can be expressed as a composition of exponential maps (in the regular sense) with elements $\psi$ of the pseudogroup $\mathbf{H}$ generated by $\mathcal{G}$. If $r$ is small enough so that there are no conjugate points on $\mathcal{G}$-geodesics emanating from $p$ and of length less than $r$, then the maps $\psi$ will be local diffeomorphisms if we restrict them to open subsets $U$ of the open ball $B_r \subset T_{p}M$ centered at the origin and of radius $r$. Hence, given any $\mathcal{G}$-invariant tensor, we can pull back this tensors to a tensor on $B_r$ by using the maps $\psi$. The pullback will not depend on the choice of $\psi$ since the tensor is $\mathcal{G}$-invariant. In particular, we can pullback the Riemannian metric $g$ to a Riemannian metric on $B_r$. The geodesics of this pullback Riemannian metric which emanate from the origin will clearly be straight lines.

	\section{Uniqueness of solutions to the Ricci flow.}
	We will now consider the Ricci flow on closed Riemannian groupoids. We first start by proving a uniqueness theorem.
	
\begin{thm} \label{uniqueness-theorem}
Suppose $g_{0}$ is a $\mathcal{G}$-invariant Riemannian metric on a closed groupoid $(\mathcal{G},M^{n})$ such that the induced metric $d_{0}$ on the space of orbits $W= M/\mathcal{G}$ is complete. Suppose that there exists a Haar system $\rho$ and a constant $C_{1}>0$ such that the measure $vol_{0}$ induced by $\rho$ and $g_{0}$ on the space $W$ satisfies the growth condition
\begin{align}
vol_{0}(B(\mathcal{O},r)) \leq C_{1} e^{C_{1}r}  \label{volumeGrowth}
\end{align}
 where $B(\mathcal{O},r)$ is a ball of radius $r > 0 $ in $(W,d_{0})$. Suppose also that the mean curvature form $\theta$  of the Haar system satifies
\begin{align}
 |\theta|_{g(0)} \leq C_{2} \label{thetaControlDos}
\end{align}
  for some constant $C_2 >0$. Finally, suppose that $g(t)$ and $\tilde{g}(t)$ are $\mathcal{G}$-invariant solutions to the Ricci flow on the closed interval $[0,T]$ with initial value $g(0)=\tilde{g}(0)=g_{0}$ and satisfying 
\begin{align}
\displaystyle \mathrm{sup}_{M \times [0,T]} |Rm|_{g(t)} , \mathrm{sup}_{M \times [0,T]} |\widetilde{Rm}|_{g(t)} \leq K \label{CurvatureControlDos}
\end{align}
for some constant $K \geq 0$. Then $g(t)=\tilde{g}(t)$ for all $t \in [0,T]$.
\end{thm}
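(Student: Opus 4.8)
The plan is to adapt the energy method of Kotschwar \cite{kotschwar} to the groupoid setting, using the integration-by-parts formula of Lemma~\ref{byparts-lemma} in place of integration by parts on a closed manifold. Since $g(t)$ and $\tilde g(t)$ are both $\mathcal{G}$-invariant, every difference tensor built from them is again $\mathcal{G}$-invariant and hence may be integrated against the measure $\phi^2\,d\mu_{g}$. First I would introduce
\[
h = g - \tilde g, \qquad A = \Gamma(g) - \Gamma(\tilde g),
\]
where $A = \nabla - \tilde\nabla$ is the tensorial difference of the two Levi-Civita connections. The key algebraic fact is that $A$ is equivalent to $\tilde\nabla h$: from $\nabla g = 0$ one gets $\tilde\nabla h = \tilde\nabla g = -A \ast g$, so that $\int_{M}\big(|h|^2 + |A|^2\big)\,\phi^2 d\mu_{g}$ is in effect an $H^{1}$-energy in $h$, and $h(0)=A(0)=0$ because $g$ and $\tilde g$ share the initial datum $g_{0}$.

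Next I would derive the coupled evolution system. Differentiating the two Ricci flows and comparing in a common frame gives, schematically,
\[
\partial_{t} h = \tilde g^{-1}\ast\tilde\nabla A + Rm \ast h + A\ast A, \qquad \partial_{t} A = \tilde\Delta A + Rm\ast A + \nabla Rm \ast h + \tilde\nabla A \ast A,
\]
so that $h$ obeys a transport-type equation that is first order in $A$ while $A$ obeys a genuinely parabolic equation. All reaction coefficients are controlled by $\sup_{M\times[0,T]}\big(|Rm| + |\nabla Rm|\big)$; the bound on $|Rm|$ is hypothesis (\ref{CurvatureControlDos}), and the bound on $|\nabla Rm|$ follows from Shi's local derivative estimates together with the bounded geometry supplied by the local models of Subsection~\ref{localModel}.

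I would then fix an exhaustion of $(W,d_{0})$ by cutoff functions $\chi_{R}$ with $\chi_{R}\equiv 1$ on $B(\mathcal{O},R)$, $\operatorname{supp}\chi_{R}\subset B(\mathcal{O},2R)$ and $|\nabla\chi_{R}|\le C/R$, and set
\[
\mathcal{E}_{R}(t) = \int_{M}\big(|h|^2 + |A|^2\big)\,\chi_{R}^{2}\,\phi^{2} d\mu_{g(t)}.
\]
Differentiating in $t$ (using the evolution of the weighted measure, cf.\ (\ref{time-derivative})) and integrating by parts via Lemma~\ref{byparts-lemma} and Corollary~\ref{byparts-cr2}, the parabolic term $\tilde\Delta A$ contributes $-2\int|\tilde\nabla A|^{2}\chi_{R}^{2}\phi^{2}d\mu_{g}\le 0$ together with the mean-curvature correction produced by the groupoid integration-by-parts formula and a cross term in $\nabla\chi_{R}$; the transport term in $\partial_{t}h$ is integrated by parts to trade $\tilde\nabla A$ against $A$, using $A\sim\tilde\nabla h$. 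Invoking $|\theta|\le C_{2}$ from (\ref{thetaControlDos}) and $|\nabla\chi_{R}|\le C/R$, this yields a differential inequality of the form
\[
\frac{d}{dt}\mathcal{E}_{R}(t) \le C\,\mathcal{E}_{R}(t) + \frac{C}{R}\int_{\operatorname{supp}(\nabla\chi_{R})}\big(|h|^{2}+|A|^{2}+|\tilde\nabla A|^{2}\big)\,\phi^{2}d\mu_{g}
\]
with $C=C(n,K,C_{2})$. Since $\mathcal{E}_{R}(0)=0$, the volume-growth hypothesis (\ref{volumeGrowth}) is used to show that the annular error term tends to $0$ as $R\to\infty$; letting $R\to\infty$ and applying Gronwall's inequality gives $\mathcal{E}(t)\equiv 0$ on $[0,T]$, hence $h\equiv 0$ and $g(t)=\tilde g(t)$.

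The main obstacle is not the algebra of the evolution equations, which parallels the manifold case, but the analysis on the non-compact orbit space $W$: one must simultaneously justify the integration by parts with the groupoid measure while absorbing the mean-curvature term using (\ref{thetaControlDos}), and run the weighted cutoff argument so that the boundary errors at spatial infinity vanish, which is precisely the role of the volume-growth bound (\ref{volumeGrowth}). Making the two hypotheses cooperate in a single weighted Gronwall estimate is the technical heart of the proof.
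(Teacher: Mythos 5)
Your overall strategy (Kotschwar's energy method, run against the groupoid measure $\phi^2 d\mu_g$ via Lemma~\ref{byparts-lemma}) is the same as the paper's, but there are two concrete gaps that prevent your version from closing. First, your energy omits the curvature difference $S=Rm-\widetilde{Rm}$, and you assert that $A=\nabla-\tilde\nabla$ satisfies a parabolic equation $\partial_t A=\tilde\Delta A+\cdots$. It does not: from $\partial_t\Gamma^k_{ij}=-g^{kl}(\nabla_iR_{jl}+\nabla_jR_{il}-\nabla_lR_{ij})$ one gets only the transport-type bound (\ref{Acontrol}), $|\partial_t A|\le C(|\tilde g^{-1}||\tilde\nabla\widetilde{Rm}||h|+|\widetilde{Rm}||A|+|\nabla S|)$, with no Laplacian of $A$ anywhere. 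The term $|\nabla S|$ cannot be absorbed by an energy in $(h,A)$ alone; the whole point of Kotschwar's functional is that the parabolicity sits in the evolution of $S$ (inequality (\ref{Scontrol})), whose integration by parts produces the good term $-\mathcal{J}=-\int|\nabla S|^2\,e^{-c}\phi^2d\mu_g$ that cancels the $+\mathcal{J}$ generated by the $A$-equation. Relatedly, the weights $t^{-1}$ and $t^{-\alpha}$ in the paper's energy are not decorative: they produce the negative terms $-\tfrac12 t^{-1}\mathcal{H}$ and $-\alpha t^{-1}\mathcal{I}$ needed to absorb the singular coefficients (e.g.\ $t^{\alpha-1}\mathcal{I}$) coming from the Bando--Shi bound $|\tilde\nabla\widetilde{Rm}|\le N/\sqrt t$, and they also make $\lim_{t\to0^+}\mathcal{E}(t)=0$ a nontrivial consequence of $|h|\le Nt$, $|A|\le N\sqrt t$.

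Second, your spatial localization fails against the hypothesis you are given. With cutoffs $\chi_R$ supported in $B(\mathcal{O},2R)$ and $|\nabla\chi_R|\le C/R$, the annular error is of size $R^{-1}\,vol_0(B(\mathcal{O},2R))\sim R^{-1}e^{2C_1R}$ times the (at best bounded) integrand, which diverges as $R\to\infty$; exponential volume growth cannot be beaten by a $1/R$ gradient factor. The paper instead uses the Gaussian-in-distance weight $e^{-c}$ with $c=B\overline r^2/(4(2T-t))$, chosen so that $\partial_tc\ge5|\nabla c|^2$: the quadratic exponent dominates the exponential volume growth (making the full integral over $M$ finite with no cutoff at all), and the time derivative of the weight absorbs the $|\nabla c|^2|S|^2$ terms produced by integration by parts. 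A smaller point: your appeal to the local models of Subsection~\ref{localModel} to get $|\nabla Rm|$ bounds sidesteps the real issue that $M$ need not be complete, so metric balls in $M$ need not be relatively compact; the paper obtains the Bando--Shi estimate (\ref{l'estimation}) by pulling the flow back to a ball in $T_pM$ via the generalized exponential map along $\mathcal{G}$-geodesics, using completeness of $(W,d_0)$ and the absence of conjugate points below the scale $\pi/\sqrt K$.
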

The conditions (\ref{volumeGrowth}), (\ref{thetaControlDos}) and (\ref{CurvatureControlDos}) will hold for any choice of a Haar system $\rho$ if $W$ is compact.

\begin{crly}
Suppose $g_{0}$ is a $\mathcal{G}$-invariant Riemannian metric on a closed groupoid $(\mathcal{G},M^{n})$ with compact space of orbits $W$. Suppose that $g(t)$ and $\tilde{g}(t)$ are $\mathcal{G}$-invariant solutions to the Ricci flow on the closed interval $[0,T]$ with initial value $g(0)=\tilde{g}(0)=g_{0}$. Then $g(t)=\tilde{g}(t)$ for all $t \in [0,T]$.
\end{crly}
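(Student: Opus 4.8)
The plan is to deduce the corollary directly from Theorem~\ref{uniqueness-theorem} by checking that compactness of the orbit space $W$ forces all of its hypotheses to hold, for a fixed choice of Haar system --- say the one $\rho$ generated by $g_{0}$ as in Subsection~\ref{subsec:Haar-system}. Concretely, I would verify four things: that the induced length metric $d_{0}$ on $W$ is complete; that the volume growth bound (\ref{volumeGrowth}) holds; that the mean curvature form satisfies (\ref{thetaControlDos}); and that the curvature bound (\ref{CurvatureControlDos}) holds. Once these are in place, Theorem~\ref{uniqueness-theorem} applies verbatim and yields $g(t)=\tilde{g}(t)$ on $[0,T]$.

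For completeness and the volume bound I would argue as follows. By the lemma of Subsection~\ref{subsec:Gvectorfields} the metric $d_{0}$ induces the quotient topology on $W$ and makes $(W,d_{0})$ a length space; since $W$ is compact, $(W,d_{0})$ is a compact metric space and is therefore complete. For (\ref{volumeGrowth}), recall from Subsection~\ref{subsec:integration-of-Gfunctions} that the measure $vol_{0}$ on $W$ is $d\eta(g_{0})=\hat{\pi}_{\ast}d\mu_{\overline{g}}$, the pushforward of the Riemannian measure of the compact manifold $Z$; hence the total mass $V_{0}=vol_{0}(W)$ is finite. Then for every $r>0$ we have $vol_{0}(B(\mathcal{O},r))\le V_{0}$, so choosing any constant $C_{1}\ge V_{0}$ gives $vol_{0}(B(\mathcal{O},r))\le V_{0}\le C_{1}\le C_{1}e^{C_{1}r}$, which is exactly (\ref{volumeGrowth}).

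For the remaining two bounds I would use the descent of $\mathcal{G}$-invariant continuous functions to the compact orbit space. The mean curvature form $\theta$ is a smooth $\mathcal{G}$-invariant one-form, so $|\theta|_{g_{0}}$ is a continuous $\mathcal{G}$-invariant function on $M$; by Subsection~\ref{subsec:integration-of-Gfunctions} it descends to a continuous function on the compact space $W$ and is therefore bounded, giving (\ref{thetaControlDos}). Likewise, since $g(t)$ and $\tilde{g}(t)$ are $\mathcal{G}$-invariant solutions on the \emph{closed} interval $[0,T]$, the functions $|Rm|_{g(t)}$ and $|\widetilde{Rm}|_{\tilde{g}(t)}$ are continuous and $\mathcal{G}$-invariant, hence descend to continuous functions on the compact space $W\times[0,T]$; their common supremum $K$ is finite, which is (\ref{CurvatureControlDos}).

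There is no serious obstacle here: the corollary is essentially bookkeeping, and the only points that require a little care are (i) exhibiting a single constant $C_{1}$ valid for all radii, which is immediate once the orbit-space volume is seen to be finite, and (ii) justifying that finiteness together with the boundedness of $\theta$ and of the curvatures. Both rest on facts already established in Section~\ref{sec:groupoids}, namely that $Z$ is compact whenever $W$ is and that $\mathcal{G}$-invariant continuous functions descend to continuous functions on the compact quotient. With the hypotheses of Theorem~\ref{uniqueness-theorem} verified for the Haar system generated by $g_{0}$, the conclusion $g(t)=\tilde{g}(t)$ for all $t\in[0,T]$ follows at once.
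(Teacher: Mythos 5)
Your proposal is correct and follows the same route as the paper, which simply remarks that hypotheses (\ref{volumeGrowth}), (\ref{thetaControlDos}) and (\ref{CurvatureControlDos}) hold automatically when $W$ is compact and then invokes Theorem~\ref{uniqueness-theorem}. Your verifications (completeness of the compact length space $(W,d_{0})$, finiteness of $vol_{0}(W)$ via compactness of $Z$, and descent of the $\mathcal{G}$-invariant functions $|\theta|_{g_{0}}$ and $|Rm|$, $|\widetilde{Rm}|$ to the compact spaces $W$ and $W\times[0,T]$) correctly fill in the details the paper leaves implicit.
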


We use the energy approach of \cite{kotschwar} to prove the theorem. Kotschwar defines a time-dependent energy integral of the form
\begin{eqnarray*}
\mathcal{E}(t)=\int_{M} \left(t^{-1}|g-\tilde{g}|_{g(t)}^2+ t^{-\alpha}|\Gamma -\tilde{\Gamma}|_{g(t)}^{2} + |Rm-\widetilde{Rm}|_{g(t)}^2 \right) \, e^{-c} d\mu_{g(t)}.
\end{eqnarray*}
The function $c$ in the above equation depends on the distance function and satisfies certain properties and $\alpha$ is a suitable constant. We will use a similar function in our proof. It will be defined in terms of the distance function of the space $W$. Also $\Gamma$ and $\tilde{\Gamma}$ denote the Christoffel symbols and $Rm$  and $\widetilde{Rm}$ denote the curvature tensors of $g$ and $\tilde{g}$ respectively. Since we will work with $\mathcal{G}$-invariant geometric quantities, we will use the measure $\phi^2 d\mu_{g}$ where $\phi$ is a nonnegative cutoff function for the Haar system $\rho$. Before starting the proof of Theorem~\ref{uniqueness-theorem}, we collect the results of \cite{kotschwar} that we will need.
  
	We will use the metric $g(t)$ as a reference metric and will usually  use $|.|=|.|_{g(t)}$  to denote the norms induced on $T^{k}_{l}(M)$ by $g(t)$. Just as in \cite{kotschwar}, we set
	\begin{align*}
	h =g-\tilde{g}, \, A= \nabla -\tilde{\nabla}, \, S= Rm -\widetilde{Rm}.
	\end{align*}
More precisely, $A_{ij}^{k}=\Gamma_{ij}^{k}-\tilde{\Gamma}_{ij}^{k}$ and $S_{ijk}^{l}=R_{ijk}^{l}-\tilde{R}_{ijk}^{l}$. The following geometric inequalities are listed in \cite{kotschwar}[Subsection 1.1]:
\begin{eqnarray}
\left|\frac{\partial h}{\partial t}\right| \leq C|S| \label{kcontrol}
\end{eqnarray}

\begin{eqnarray}
\left|\frac{\partial  A}{\partial t}\right| \leq C \left(|\tilde{g}^{-1}||\tilde{\nabla}\widetilde{Rm}||h| + |\widetilde{Rm}||A| +|\nabla S| \right) \label{Acontrol}
\end{eqnarray}
and
\begin{eqnarray}
\left| \frac{\partial S}{\partial t}-\nabla S -\mathrm{div}U \right|&\leq& C \left(|\tilde{g}^{-1}||\tilde{\nabla}\widetilde{Rm}|A| + |\tilde{g}^{-1}||\widetilde{Rm}|^2|h|\right)  \label{Scontrol}   \\
																													&   &	+ C(|Rm|+ |\widetilde{Rm}|)|S|  \notag
\end{eqnarray}
	where, in each of these inequalties, $C$ is a constant depending only on $n$. The quantity $U$ is a (3,2)-tensor given by $U_{ijk}^{ab}=g^{ab}\nabla_{b}\tilde{R}_{ijk}^{l}-\tilde{g}^{ab}\tilde{\nabla}_{b} \tilde{R}_{ijk}^{l}$ and satisfies 
\begin{eqnarray}
		|U| \leq C \left(|\tilde{g}^{-1}||\tilde{\nabla}\widetilde{Rm}||h| + |A||\widetilde{Rm}| \right). \label{Ucontrol}
\end{eqnarray}
The uniform bound on the curvature (\ref{CurvatureControlDos}) imply that the metrics $g(t), \tilde{g}(t)$ and $g_{0}$ are uniformly equivalent. Hence, the uniform bound on the mean curvature form at time $t = 0$ implies that $|\theta| = |\theta|_{g(t)} \leq N$ for some positive constant $N$ depending on $C_2,K$ and $T$.
					
 In the case of a complete Riemannian metric $g_{0}$, we can use the curvature bound (\ref{CurvatureControlDos}) and the estimates of Bando and Shi (see \cite{bando,shi} and \cite{chow2}[Chapter 14]) to obtain an estimate
	\begin{eqnarray}
	|Rm|_{g(t)} + |\tilde{Rm}|_{\tilde{g}(t)} + \sqrt{t}|\nabla Rm|_{g(t)} + \sqrt{t}|\tilde{\nabla}\widetilde{Rm}|_{\tilde{g}(t)}  \leq N\label{l'estimation}
	\end{eqnarray}
	for all $ t \in  [0,T]$ and for some constant $N=N(n,K,T^{\ast})$ with $T^{\ast}=\max \{T,1\}$. We still have such an estimate under the simpler assumption that $(W,d_{0})$ is complete. Indeed, if we fix $r > 0$ with $r \leq \frac{\pi}{\sqrt{K}}$, then, as we mentioned in the last subsection, $\mathcal{G}$-geodesics with respect to the metric $g(0)$ of length less than or equal to $r$ will not contain conjugate points. Therefore, for any $p \in M$, we can use the exponential $\mathrm{Ex}_{p}$ as defined in the previous subsection to pullback the flows $g(t)$ and $\tilde{g}(t)$ to the open ball $B_r$ in $T_{p}M$ centered at the origin and of radius $r$. Furthermore, for the pullback of the time t=0 metric and for any $0< s <r$, the corresponding closed ball centered at the origin and of radius $s$ is just the closed ball of radius $s$ for the inner product space $(T_{p}M, g_p(0))$ and is therefore compact. We can then apply the Bando and Shi estimates to the pullback flow on $B_r$. This then gives us an estimate of the form (\ref{l'estimation}) on $M$. Finally, by using the fact that the metrics $g(t), \tilde{g}(t)$ and $g_0$ are uniformly equivalent, we see that, for some possibly larger constant $N$, the inequality (\ref{l'estimation}) remains valid when the norms are replaced by the norm $|.|=|.|_{g(t)}$. It follows that \cite{kotschwar}[Lemma 6]  holds. More precisely, we have  

	\begin{eqnarray}
	|h(p,t)| \leq Nt  \,\, \text{and}  \,\, |A(p,t)| \leq N \sqrt{t} \label{more-control}
	\end{eqnarray}
	on $M \times [0,T]$  for some constant $N=N(n,K,T^{\ast})$.
	
	We now define the decay function $c$. We fix an orbit $\mathcal{O}$ in $W$ and we define a function $\overline{r}$ on $M$ by $\overline{r}(x) = d_{0}(\mathcal{O}, \mathcal{O}_{x})$. This function is Lipschitz and hence it is differentiable almost everywhere on $M$. We have $|\nabla \overline{r}|^2_{g(0)} \leq 1$. Hence $|\nabla \overline{r}|^2_{g(t)} \leq N$ for some constant $N$ since the metrics $g(t)$ and $g(0)$ are uniformly equivalent. For some constant $B >0$, we define the decay function by $c(x,t) =  \frac{B\overline{r}^2(x)}{(4(2T-t))}$ for $(x,t) \in M \times [0,T]$. It follows that $c$ is differentiable almost everywhere and we can choose $B$ small enough so that 
\begin{align}
\frac{\partial c}{\partial t} - 5|\nabla c|^2_{g(t)} \geq 0   \label{decayFunctioncontrol}
\end{align}
	We also have the inequality 
\begin{align}
	e^{-c(x,t)} \leq e^{\frac{-B\overline{r}^2}{8T}}. \label{DecayControl}
\end{align}
We will denote by $vol_{t}$ the measure on $W$ induced by the Haar system $\rho$ and the Riemannian metric $g(t)$. So, for any open ball $B(\mathcal{O},r)$ in $(W, d_{0})$, we have 
\begin{eqnarray*}
\mathrm{vol}_{t}(B(\mathcal{O},r)) = \int_{\sigma^{-1}(B(\mathcal{O},r))} \, \phi^2 d\mu_{g}
\end{eqnarray*}
 where $\sigma: M  \rightarrow W$ is the quotient map. Since $\frac{d\mu_{g}}{dt} = -R d\mu_{g}$ where $R$ is the scalar curvature and since we have a uniform bound on the curvature, we see that the volume growth condition (\ref{volumeGrowth}) extends to a volume growth condition
\begin{align}
vol_{t}(B(\mathcal{O},r)) \leq Ne^{Nr}  \label{volumeGrowth2}
\end{align}
for some constant $N$ depending on $C_1$,$K$ and $T$ and the dimension $n$. This volume growth condition and the estimates (\ref{kcontrol}) to (\ref{DecayControl}) imply that, for some fixed $\alpha \in (0,1)$, the energy integral
	\begin{eqnarray}
	\mathcal{E}(t)=\int_{M} \left(t^{-1}|h|^2+ t^{-\alpha}|A|^{2} + |S|^2 \right) \, e^{-c} \phi^2 d\mu_{g}
	\end{eqnarray}
	is well-defined and differentiable on $(0,T]$. Furthermore, this energy integral satisfies $\displaystyle \lim_{t \rightarrow 0^{+}} \mathcal{E}(t)=0$. Theorem~\ref{uniqueness-theorem} now follows from iterating the following result.
	
	\begin{ppr} [\cite{kotschwar} Proposition 7]
	There exists $N=N(n,K,T^{\ast}) >0$ and $T_{0}=T_{0}(n,\alpha) \in (0,T]$ such that $\mathcal{E}^{\prime}(t) \leq  N \mathcal{E}(t)$ for all $t \in (0,T_{0}]$. Hence $\mathcal{E} \equiv 0$ on $(0,T_{0}]$.
	\end{ppr}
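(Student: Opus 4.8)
The plan is to differentiate $\mathcal{E}$ directly and to show that, after the second-order terms have been integrated by parts, every term in $\mathcal{E}'(t)$ is either bounded by a fixed multiple of $\mathcal{E}(t)$ or can be absorbed into a single favorable negative gradient term. First I would compute $\mathcal{E}'(t)$ using the evolution formula (\ref{time-derivative}) for the weighted measure $\phi^2 d\mu_{g}$ together with the pointwise derivatives of $|h|^2$, $|A|^2$ and $|S|^2$. By (\ref{CurvatureControlDos}) the contribution of the evolving measure is harmless; using (\ref{kcontrol}), (\ref{Acontrol}), the derivative bounds (\ref{l'estimation}), and the estimates $|h| \leq Nt$, $|A| \leq N\sqrt{t}$ of (\ref{more-control}), each of the terms coming from $\partial_t(|h|^2)$ and $\partial_t(|A|^2)$ is controlled by $N\mathcal{E}(t)$ once the weights $t^{-1}$ and $t^{-\alpha}$ are accounted for, provided $T_{0}$ is chosen small depending on $\alpha$.

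The heart of the estimate is the $|S|^2$ term. Here I would exploit the reaction-diffusion structure of $S$ recorded in (\ref{Scontrol}): modulo $\mathrm{div}\, U$ and the error terms controlled by (\ref{Ucontrol}) and the curvature bounds, $\partial_t |S|^2$ is governed by its second-order part. Integrating this part and the $\mathrm{div}\, U$ term by parts with the groupoid formula of Lemma~\ref{byparts-lemma}, in the form of Corollary~\ref{byparts-cr2}, and using $\nabla(e^{-c}S) = e^{-c}\left(\nabla S - (\nabla c) \otimes S\right)$, produces the good term $-2\int_{M} |\nabla S|^2 \, e^{-c}\phi^2 d\mu_{g}$ together with cross terms containing $\nabla c$, the contribution of $U$, and, crucially, the extra interior-product terms $\iota_{\theta^{\sharp}}$ coming from the mean-curvature form in Lemma~\ref{byparts-lemma}. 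Because $|\theta| = |\theta|_{g(t)} \leq N$ is uniformly bounded, these $\theta$ terms are estimated by Young's inequality exactly like the remaining first-order cross terms: a small multiple of $\int_{M} |\nabla S|^2 e^{-c}\phi^2 d\mu_{g}$, absorbed into the good term, plus a zeroth-order remainder bounded by $N\mathcal{E}(t)$.

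The delicate step, inherited from \cite{kotschwar}, is the treatment of the remainders of the form $\int_{M} |\nabla c|^2 |S|^2 e^{-c}\phi^2 d\mu_{g}$. Differentiating the weight $e^{-c}$ in time contributes $-\int_{M} |S|^2 (\partial_t c) e^{-c}\phi^2 d\mu_{g}$, and the defining property (\ref{decayFunctioncontrol}) of the decay function, $\partial_t c - 5|\nabla c|^2 \geq 0$, is precisely what lets this negative term dominate the $|\nabla c|^2$ remainders, with enough slack left over for the analogous remainders arising from the $h$- and $A$-terms. Assembling all of these estimates gives $\mathcal{E}'(t) \leq N\mathcal{E}(t)$ on $(0,T_{0}]$. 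Combined with the boundary behavior $\lim_{t \rightarrow 0^{+}} \mathcal{E}(t) = 0$ established above, Gr\"onwall's inequality yields $\mathcal{E}(t) \leq e^{N(t-s)}\mathcal{E}(s)$ for $0 < s < t \leq T_{0}$; letting $s \rightarrow 0^{+}$ forces $\mathcal{E} \equiv 0$ on $(0,T_{0}]$.

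I expect the main obstacle to be twofold. Analytically, since $M$ is in general neither compact nor complete, I must justify both the differentiation under the integral sign and the vanishing of boundary terms in each integration by parts; this is exactly where the compact support of $\phi$ in the sense of Subsection~\ref{subsec:Haar-system}, the spatial decay (\ref{DecayControl}), and the volume growth bound (\ref{volumeGrowth2}) enter, since together they ensure absolute convergence of all the integrals involved. Structurally, the bookkeeping of the time weights $t^{-1}$ and $t^{-\alpha}$, making sure that the gradient term and the $\partial_t c$ term genuinely dominate every error term uniformly in $t$ for small enough $T_{0}$, is the genuinely delicate point, just as in Kotschwar's original argument; the groupoid-specific $\theta$ terms, by contrast, are benign thanks to the uniform bound on $|\theta|$.
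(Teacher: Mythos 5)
Your proposal is correct and follows essentially the same route as the paper: both differentiate the three pieces of $\mathcal{E}$, integrate the second-order and $\mathrm{div}\,U$ terms by parts via the groupoid formula of Lemma~\ref{byparts-lemma}, absorb the extra $\iota_{\theta^{\sharp}}$ terms using the uniform bound on $|\theta|_{g(t)}$ together with Young's inequality, control the $|\nabla c|^2$ remainders with the defining property (\ref{decayFunctioncontrol}) of the decay function, and close with Gr\"onwall. The paper likewise treats the argument as Kotschwar's Proposition~7 with the mean-curvature corrections as the only new ingredient, so no further comparison is needed.
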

	\begin{proof}
	Since the proof is basically the same as the proof of \cite{kotschwar}[Proposition 7], we will, for the most part, not be repeating the computations that are similar. We will mostly use the same notation and convention. As before, let $R=R(g(t))$ be the scalar curvature of the metric $g(t)$. We define
	\begin{eqnarray*}
	\mathcal{S}(t)=\int_{M} |S|^2 \,  e^{-c}\phi^2 d\mu_{g}, \, \, \mathcal{H}(t)= t^{-1}\int_{M} |h|^2 \, e^{-c} \phi^2 d\mu_{g},\\
	\mathcal{I}(t)= t^{-\alpha} \int_{M} |A|^2 \,  e^{-c} \phi^2 d\mu_{g}, \,\, \text{and} \,\, \mathcal{J}(t)=\int_{M} |\nabla S|^2 \,  e^{-c} \phi^2 d\mu_{g}
	\end{eqnarray*}
	so $\mathcal{E}(t)=\mathcal{S}(t)+\mathcal{H}(t)+\mathcal{I}(t)$. We will denote by $C$ a series of constants depending only on $n$ and by $N$ a series of constants depending on at most $n,\beta, K$ and $T^{\ast}$.
	If we apply the condition (\ref{CurvatureControlDos}), we obtain
	\begin{eqnarray*} 
	\mathcal{S}^{\prime}  &\leq& -\int_{M} |S|^2 R \, e^{-c}\phi^2 d\mu_{g} +  \int_{M} \left( 2\langle \frac{\partial S}{\partial t},S \rangle -\frac{\partial c}{\partial t} |S|^2 \right) \, e^{-c} \phi^2 d\mu_{g}\\
	                      & \leq&  N \mathcal{S} + \int_{M} \left( 2\langle \frac{\partial S}{\partial t},S \rangle -\frac{\partial c}{\partial t} |S|^2 \right) \, e^{-c}\phi^2 d\mu_{g}.
	\end{eqnarray*}
	 Just like in the proof of \cite{kotschwar}[Proposition 7], we can use (\ref{Scontrol}) and (\ref{l'estimation}) to reduce this to 
	\begin{eqnarray*} 
	\mathcal{S}^{\prime} & \leq& N \mathcal{S} + t\mathcal{H} + t^{\alpha -1}\mathcal{I} + \int_{M} \left(2\langle \Delta S + \mathrm{div}U,S \rangle -\frac{\partial c}{\partial t} |S|^2 \right) \, e^{-c}\phi^2 d\mu_{g}.
	\end{eqnarray*}
	 We now use the integration by parts formula (\ref{integpart}) to rewrite the integral
	\begin{eqnarray*}
	2\int_{M}\langle \Delta S + \mathrm{div}U,S \rangle \,  e^{-c}\phi^2 d\mu_{g}
	\end{eqnarray*}
	as
	\begin{eqnarray*}
	-2 \int_{M} |\nabla S|^2 \, e^{-c}\phi^2 d\mu_{g} + 2 \int_{M} \langle \nabla S, \nabla c \otimes S \rangle \, e^{-c}\phi^2 d\mu_{g} \\
	 + 2 \int_{M} \langle \iota_{\theta^{\sharp}}\nabla S, S \rangle \, \phi^2d\mu_{g} 
	-2 \int_{M} \langle U, \nabla S \rangle \, e^{-c} \phi^2d\mu_{g}  \\
	+ 2 \int_{M} \langle U, \nabla c \otimes S \rangle \, e^{-c} \phi^2d\mu_{g}
	+ 2 \int_{M} \langle \iota_{\theta^{\sharp}}U, S \rangle \, e^{-c} \phi^2d\mu_{g} 
\end{eqnarray*}
	If we use the uniform bound on the mean curvature form, we get
\begin{eqnarray*}
	2\langle \iota_{\theta^{\sharp}}\nabla S, S \rangle&=& 2\langle \nabla S, \theta \otimes S \rangle \\
	                                                   &\leq & 2 |\nabla S||\theta||S| \\
																										  &\leq & \frac{|\nabla S|^2}{4} + 4 |\theta|^2|S|^2\\
																											 &\leq & \frac{|\nabla S|^2}{4}  + N|S|^2	
	\end{eqnarray*}
	
	\begin{eqnarray*}
	2\langle \iota_{\theta^{\sharp}}U, S \rangle \leq  |U||S||\theta| \leq |U|^2 + N|S|^2 
	\end{eqnarray*}
	
	\begin{eqnarray*}
	2 \langle \nabla S, \nabla c \otimes S \rangle \leq \frac{|\nabla S|^2}{4} + 4 |\nabla c|^2 |S|^2
	\end{eqnarray*}
	
	\begin{eqnarray*}
	2 \langle U, \nabla c \otimes S \rangle \leq |U|^2 + |\nabla c|^2 |S|^2
	\end{eqnarray*}
	
	and
	\begin{eqnarray*}
	 -2\langle U, \nabla S \rangle \leq \frac{|\nabla S|^2}{2}  + 2|U|^2.	
	\end{eqnarray*}
	Therefore,
		\begin{eqnarray*}
	2\int_{M}\langle \Delta S + \mathrm{div}U,S \rangle \,  e^{-c} \phi^2 d\mu_{g} & \leq & -\mathcal{J} + N\mathcal{S} \\
	                                     & + & \int_{M} \left(4|U|^2 + 5|\nabla c|^2 |S|^2 \right) \, e^{-c} \phi^2 d\mu_{g}
	\end{eqnarray*}
	It follows from (\ref{Ucontrol}) and (\ref{l'estimation}) that we have $|U|^2 \leq Nt^{-1}|h|^2 + N|A|^2$. Hence, if we combine this with the condition $\frac{\partial c}{\partial t} \geq 5|\nabla c|^2_{g(t)}$, we obtain 
	\begin{eqnarray}
	\mathcal{S}^{\prime} \leq N \mathcal{S} + (t+N)\mathcal{H}  + (t^{\alpha-1}+Nt^{\alpha})\mathcal{I} -\mathcal{J} \notag \\
	                     \leq  N \mathcal{S} + N\mathcal{H}  + (t^{\alpha-1}+N)\mathcal{I} -\mathcal{J}  \label{Sprime}
	\end{eqnarray}
where the last line was obtained by using the inequalities $t \leq T$ and $t^{\alpha} \leq T^{\alpha}$.

       For the quantities $\mathcal{H}$ and $\mathcal{I}$, we apply the uniform bounds on the curvature and the mean curvature form as well as the condition $\frac{\partial c}{\partial t} \geq 0$ to obtain:
\begin{eqnarray*}
\mathcal{H}^{\prime} \leq (N- t^{-1})\mathcal{H} + 2t^{-1} \int_{M} \langle \frac{\partial h}{\partial t}, h \rangle  \, \phi^2 d\mu_{g}
\end{eqnarray*}
and 
\begin{eqnarray*}
\mathcal{I}^{\prime} \leq (N- \beta t^{-1})\mathcal{I} + 2t^{-\alpha} \int_{M} \langle \frac{\partial A}{\partial t}, A \rangle  \, \phi^2 d\mu_{g}.
\end{eqnarray*}
	We then use the conditions (\ref{kcontrol}), (\ref{Acontrol}) and (\ref{l'estimation}) and do the same computations as in \cite{kotschwar}[Proposition 7] to obtain
	\begin{eqnarray}
	\mathcal{H}^{\prime} \leq \left(N-\frac{t^{-1}}{2}\right)\mathcal{H} + C\mathcal{S} \label{Kprime}
	\end{eqnarray}
	and 
	\begin{eqnarray}
	\mathcal{I}^{\prime} \leq N \mathcal{H} + (N-\alpha t^{-1}+Ct^{-\alpha})\mathcal{I} + \mathcal{J} \label{Iprime}.
	\end{eqnarray}
	Combining (\ref{Sprime}),(\ref{Kprime}) and (\ref{Iprime}), we get
	\begin{eqnarray*}
	\mathcal{E}^{\prime}(t) \leq N \mathcal{E}(t) -\frac{1}{2}t^{-1}\mathcal{H}(t)-t^{-1}(\alpha -t^{\alpha} + Ct^{1-\alpha})\mathcal{I}(t)
	\end{eqnarray*}
	Therefore, for $T_0$ sufficiently small depending only on $\alpha$ and $C=C(n)$, and for some large enough $N=N(n,K,\alpha,T)$, we have $\mathcal{E}^{\prime}(t) \leq N \mathcal{E}(t)$ on $(0,T_{0}]$. Since $ \displaystyle \lim_{t \rightarrow 0^{+}} \mathcal{E}(t)=0$, it follows from Gronwall's inequality that $\mathcal{E} \equiv 0$ on $(0,T_{0}]$. This completes the proof.
	\end{proof}

	\section{Short time existence of solutions to the Ricci flow}
	We will use the Ricci-deTurck flow to prove short time existence of the Ricci flow on a closed groupoid.
	
	\begin{thm} \label{existence}
	If $(\mathcal{G},M)$ is a closed groupoid with compact connected space of orbits $W=M/\mathcal{G}$ and if $g_{0}$ is a $\mathcal{G}$-invariant Riemannian metric on $M$, then, for some $T>0$, there exists a Ricci flow solution $g(t)$ defined for $t \in [0,T)$ and with initial condition $g(t)=g(0)$.
	\end{thm}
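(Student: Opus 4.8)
The plan is to establish short-time existence for the Ricci flow by first establishing it for the associated Ricci-DeTurck flow and then undoing the DeTurck modification using the self-equivalence machinery of Subsection~\ref{subsec:Gvectorfields}. Fix the $\mathcal{G}$-invariant background metric $\tilde{g}=g_0$ and consider the Ricci-DeTurck flow
\begin{eqnarray*}
\frac{\partial g}{\partial t} = -2\,\mathrm{Ric}(g) + \mathcal{L}_{W(g)}\,g, \qquad W(g)^k = g^{ij}\left(\Gamma_{ij}^k(g) - \Gamma_{ij}^k(g_0)\right).
\end{eqnarray*}
Because the operator on the right-hand side is natural and the background metric is $\mathcal{G}$-invariant, the DeTurck vector field $W(g)$ is $\mathcal{G}$-invariant whenever $g$ is, so the flow preserves $\mathcal{G}$-invariance; moreover it is strictly parabolic in $g$, exactly as in the manifold case. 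If $g^{DT}(t)$ is a $\mathcal{G}$-invariant solution on $[0,T)$, then $W(g^{DT}(t))$ is a time-dependent $\mathcal{G}$-invariant vector field, and integrating it by the construction of Subsection~\ref{subsec:Gvectorfields} yields a one-parameter family of self-equivalences $\Psi_t$. Setting $g(t)=\Psi_t^{\ast}g^{DT}(t)$ produces a $\mathcal{G}$-invariant solution of the Ricci flow with $g(0)=g_0$.

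The heart of the argument is therefore the short-time existence of the $\mathcal{G}$-invariant Ricci-DeTurck flow, and this is where the non-completeness of $M$ must be circumvented. Following the outline in the introduction, I would not attempt to solve the parabolic equation directly on $M$. Instead I would pass to the Uhlenbeck-trick frame bundle $\mathbf{F}$ of Subsection~\ref{subsec:evolution-formulas}, with its metric-independent cross-product groupoid $\hat{\mathcal{G}}=\mathbf{F}\rtimes\mathcal{G}$ and compact quotient manifold $Z=\mathbf{F}/\hat{\mathcal{G}}$ carrying an $O(n)$-action. The key reduction is that a $\mathcal{G}$-invariant symmetric $2$-tensor on $M$ corresponds, via pullback to $\mathbf{F}$ and descent through $\hat{\sigma}$, to an $O(n)$-invariant section of a fixed vector bundle over the \emph{compact} manifold $Z$, and this correspondence is a linear isomorphism of the relevant tensor spaces carrying positive-definite tensors to positive-definite ones. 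Because the Ricci-DeTurck operator is built naturally from the metric and the fixed background, it is $\hat{\mathcal{G}}$- and $O(n)$-equivariant and hence descends to a quasilinear second-order operator on $O(n)$-invariant sections over $Z$; strict parabolicity is preserved under the descent since $\hat{\sigma}$ and $\pi$ are Riemannian submersions.

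Having descended the problem to the compact manifold $Z$, I would invoke the standard short-time existence theory for quasilinear strictly parabolic systems on a closed manifold (for instance via linearization and maximal regularity, or a contraction-mapping argument in parabolic H\"older spaces). This yields a unique solution on some interval $[0,T)$ within the $O(n)$-invariant sections; $O(n)$-invariance of the initial datum together with uniqueness and the $O(n)$-equivariance of the equation forces the solution to remain $O(n)$-invariant, so there is no loss in restricting to the invariant subspace. Pulling the solution back to $\mathbf{F}$ and then to $M$ gives the desired $\mathcal{G}$-invariant Ricci-DeTurck flow, which by the first paragraph converts to a $\mathcal{G}$-invariant Ricci flow on $[0,T)$.

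The main obstacle is the reduction in the second paragraph: one must set up the correspondence between $\mathcal{G}$-invariant metrics on $M$ and $O(n)$-invariant sections over $Z$ precisely enough that the Ricci-DeTurck operator is seen to descend to a bona fide strictly parabolic operator intrinsic to the compact manifold $Z$, so that the incompleteness of $M$ never enters the analysis. Verifying that the descended operator retains the Laplace-type principal symbol supplied by DeTurck's trick, and that the invariance constraints are compatible with the parabolic theory on $Z$, is the technical crux; once this is in place the existence statement follows from classical results on closed manifolds.
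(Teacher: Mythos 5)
Your overall architecture (DeTurck trick, lift to the frame bundle, descend to the compact quotient $Z$, solve there, restore $O(n)$-invariance by uniqueness, then undo the gauge with a self-equivalence) matches the paper's, but there is a genuine gap at the step you yourself flag as the crux: the claim that ``strict parabolicity is preserved under the descent since $\hat{\sigma}$ and $\pi$ are Riemannian submersions'' is false for the unmodified Ricci--DeTurck operator. The pullback of $P$ to the frame bundle $F$ involves second derivatives only in the horizontal directions of $\pi: F \rightarrow M$; it is degenerate along the $O(n)$-fibers. Since $\dim Z = \dim W + (\dim O(n) - \dim K)$, the quotient $Z$ retains directions along the $O(n)$-orbits (which project to points of $W$), and there the principal symbol of the naively descended operator vanishes: one checks that whenever $\mathcal{H}_f + T_f(\hat{\mathcal{G}}\cdot f) \neq T_fF$ (which is the generic situation, e.g.\ already for the trivial groupoid where $Z=F$), there are nonzero covectors on $Z$ killed by the symbol. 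So you cannot invoke the standard quasilinear parabolic theory on the closed manifold $Z$ for that operator, and restricting attention to $O(n)$-invariant sections does not rescue the argument, because the invariant quotient $Z/O(n)=W$ is not a manifold --- avoiding $W$ is the entire reason for working on $Z$.

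The missing idea, which is the key device in the paper's proof, is to solve an \emph{auxiliary} equation: one adds to the lifted operator the Laplacian along the $O(n)$-fibers, setting
\begin{equation*}
\hat{P}(s)= s^{pq}\hat{\nabla}_{p}\hat{\nabla}_{q}s + \hat{g}^{\mu \nu}\hat{\nabla}_{\mu}\hat{\nabla}_{\nu}s + \hat{K}(s,\hat{\nabla}s),
\end{equation*}
where the Greek indices run over a vertical frame. The extra term $\hat{g}^{\mu\nu}\hat{\nabla}_{\mu}\hat{\nabla}_{\nu}s$ annihilates $O(n)$-invariant (i.e.\ pulled-back) sections, so for such sections $\hat{P}(s)=\pi^{\ast}(P(g))$ and nothing is changed at the level of the equation you actually want to solve; but it supplies exactly the missing positivity of the symbol in the fiber directions, so that $\hat{P}$ is elliptic on all of $F$ and the descended operator $\overline{P}$ on $Z$ is genuinely strictly parabolic. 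With that modification the rest of your argument (short-time existence on the closed manifold $Z$, propagation of $O(n)$-invariance by uniqueness and equivariance, pullback to $M$, and the gauge transformation by the self-equivalence generated by $B^{\sharp}$) goes through as you describe. A minor further remark: since the background metric $\tilde{g}=g_0$ is fixed throughout the DeTurck argument, there is no need for the Uhlenbeck-trick bundle $\mathbf{F}$ here; the paper simply uses the orthonormal frame bundle $F_{\tilde{g}}$ of the background metric.
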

	
	\begin{proof}
	 Let $\tilde{g}$ be a fixed $\mathcal{G}$-invariant background metric. Recall that the Ricci-deTurck flow is defined by
	\begin{eqnarray}
		\frac{\partial g_{ij}}{\partial t} &=& -2 R_{ij} + \nabla_{i} B_{j} + \nabla_{j}B_{i} \label{DeTurck1} \\
	g(0)&=& g_{0} \notag
	\end{eqnarray}
	where $B=B(g)$ is the time-dependent one-form defined by 
	\begin{eqnarray*}
	B_{j}=g_{jk}g^{pq}(\Gamma^{k}_{pq}-\tilde{\Gamma}^{k}_{pq})
	\end{eqnarray*}
	This is a strictly parabolic partial differential equation. In fact, as shown in \cite{chow1}[Chaper 7], we can rewrite this equation as 
	\begin{eqnarray}
	\frac{\partial g_{ij}}{\partial t}= P(g)= g^{pq}\tilde{\nabla}_{p}\tilde{\nabla}_{q}g_{ij} + K(g,\tilde{\nabla}g)  \label{DeTurck2}
	\end{eqnarray}
where 
	\begin{eqnarray*}
    K(g,\tilde{\nabla}g) &=& -g^{pq}g_{ik}\tilde{g}^{kl}\tilde{R}_{jplq}-g^{pq}g_{jk}\tilde{g}^{kl}\tilde{R}_{iplq}\\
		&& +g^{pq}g^{kl}(\frac{1}{2}\tilde{\nabla}_{i}g_{kp}\tilde{\nabla}_{j}g_{lq} + \tilde{\nabla}_{p}g_{jk}\tilde{\nabla}_{l}g_{iq}-\tilde{\nabla}_{p}g_{jk}\tilde{\nabla}_{q}g_{il})  \\    &	&			-g^{pq}g^{kl}(\tilde{\nabla}_{j}g_{kp}\tilde{\nabla}_{q}g_{il} + \tilde{\nabla}_{i}g_{kp}\tilde{\nabla}_{q}g_{jl}). 
	\end{eqnarray*}
	
	Since the manifold $M$ is not necessarily compact, we can't  directly use the standard theorems for parabolic partial differential equations. However, since the equation~(\ref{DeTurck2}) consists of $\mathcal{G}$-invariant quantities, we can relate this equation to a partial differential equation defined for sections of a vector bundle with compact base space.
	
	 We showed in Subsection~\ref{subsec:induced-groupoid} that the orthonormal frame bundle $F=F_{\tilde{g}}$ over $M$ defined by the metric $\tilde{g}$ and its quotient $Z= F/\hat{ \mathcal{G}}$ can be equipped with metrics $\hat{g}$ and $\overline{g}$ respectively such that the maps $\pi: (F,\hat{g}) \rightarrow (M,g)$ and $\hat{\sigma}: (F,\hat{g}) \rightarrow (Z,\overline{g})$ are Riemannian submersions. Given any $\mathcal{G}$-invariant vector bundle $E$ over $M$ equipped with a $\mathcal{G}$-invariant connection $\tilde{\nabla}^{E}$, we can consider the pull back bundle $\hat{E}=\pi^{\ast}E$ of $E$ over $F$ equipped with the pull back connection $\hat{\nabla}=\hat{\nabla}^{E}=\pi^{\ast} \tilde{\nabla}^{E}$ which will be $\mathcal{G}$-invariant. Also, the $O(n)$ action on $F$ induces an action on $\hat{E}$ which also preserves the connection $\hat{\nabla}^{E}$.
	
	  We apply this to the bundle $E=Sym(T^{\ast}M)$ of symmetric (2,0)-tensors on $M$ where we equip it with the connection induced by the background $\mathcal{G}$-invariant metric $\tilde{g}$ on $M$. We now define a parabolic partial differential equation for sections of the corresponding bundle $\hat{E}$ as follows.

		 First, we denote by $\mathcal{V}$ and by $\mathcal{H}$ the vertical and horizontal distributions for the Riemannian submersion $\pi: (F,\hat{g}) \rightarrow (M,g)$. Let $(x_1,\cdots,x_{n})$ be local coordinates defined on an open set $U$ of $M$. For $ 1 \leq i \leq n$, we can lift the vector field $\partial_{ x_i}$ to a horizontal vector field $w_i$ on $F$. Hence, $\{w_1, \cdots, w_{n}\}$ is a local frame for $\mathcal{H}|_{\pi^{-1}(U)}$. Let $\{v_1,\cdots, v_{d}\}$ be a local frame for $\mathcal{V}|_{\pi^{-1}(U)}$ where $d= \mathrm{dim}O(n)=\frac{n(n-1)}{2}$.  The bi-invariant metric along the $O(n)$-fiber is then given by $\hat{g}_{\mu \nu}=\hat{g}(v_{\mu},v_{\nu})$.
				If $s$ is a section of $\hat{E}$, then for each $f \in F$, $s(f)=s_{f}$ is an element of $Sym(T^{\ast}_{x}M)$ where $x =\pi(f)$. If $x \in U$, then $s_{f}$ is locally given by $s_{f;ij}=s_{f}(\partial_{x_i},\partial_{ x_j})$. We will suppress the index $f$ and simply write $s_{ij}$. If $s_{ij}$ is positive definite so that the inverse matrix $s^{ij}$ is defined, the partial differential equation is then locally given by
		\begin{eqnarray}
		\frac{\partial s_{ij}}{\partial t} = \hat{P}(s)= s^{pq}\hat{\nabla}_{p}\hat{\nabla}_{q}s + \hat{g}^{\mu \nu}\hat{\nabla}_{\mu}\hat{\nabla}_{\nu}s + \hat{K}(s,\hat{\nabla}s) \label{DeTurck3}
		\end{eqnarray}
		where
		\begin{eqnarray*}
		 \hat{K}(s,\hat{\nabla}s) &=& -s^{pq}s_{ik}\tilde{g}^{kl}\tilde{R}_{jplq}-s^{pq}s_{jk}\tilde{g}^{kl}\tilde{R}_{iplq}\\
		&& +s^{pq}s^{kl}(\frac{1}{2}\hat{\nabla}_{i}s_{kp}\hat{\nabla}_{j}s_{lq} + \hat{\nabla}_{p}s_{jk}\hat{\nabla}_{l}s_{iq}-\hat{\nabla}_{p}s_{jk}\hat{\nabla}_{q}s_{il})  \\   			
		& &  -s^{pq}s^{kl}(\hat{\nabla}_{j}s_{kp}\hat{\nabla}_{q}s_{il} + \hat{\nabla}_{i}s_{kp}\hat{\nabla}_{q}s_{jl}). 
	\end{eqnarray*}
	
		Here, the derivative terms  with greek letters as indices such as $\hat{\nabla}_ {\mu}$ denote differentiation along the vertical vectors $v_{\mu}$, whereas the derivative terms with latin letters as indices denote differentiaton along the horizontal vectors $w_{i}$. Equation~(\ref{DeTurck3}) is a strictly parabolic partial differential equation and is  invariant under the actions of $O(n)$ and $\hat{\mathcal{G}}$. In fact, the elliptic operator $\hat{P}$  is, in a sense, simply the operator $P$  plus the Laplacian along the $O(n)$-fiber. Furthermore, if $s$ is an $O(n)$-invariant section, meaning if $s$ is the pull back of a section $g$ of $E$,  we will have $\hat{P}(s)=\pi^{\ast}(P(g))$. Hence, finding a $\mathcal{G}$-invariant solution of (\ref{DeTurck2}) is the same as finding an $O(n)$ and $\hat{\mathcal{G}}$-invariant solution of (\ref{DeTurck3}).

	Note that the vector bundle $\hat{E}$ over $F$ induces a vector bundle $\overline{E}$ over $Z=F/\hat{\mathcal{G}}$ which is equipped with a connection $\overline{\nabla}=\overline{\nabla}^{E}$ induced by the connection $\hat{\nabla}^{E}$.  The action of $O(n)$ on $\hat{E}$ induces an action of $O(n)$ on $\overline{E}$ and the connection $\overline{\nabla}$ is invariant under this action.  Furthermore, the parabolic partial differential equation (\ref{DeTurck3}) can be related to a parabolic partial differential equation 
	\begin{eqnarray}	
\frac{\partial u}{\partial t} &=& \overline{P}(u)  \label{DeTurck4}
\end{eqnarray}
	where $\overline{P}$ is an $O(n)$-invariant elliptic partial differential operator acting on sections of the bundle $\overline{E}$.  So, if we set the initial condition to be     the $O(n)$-invariant section $u_0$ of $\overline{E}$ corresponding to $g_{0}$ and since $Z$ is a closed manifold, we will have an $O(n)$-invariant solution $u(t)$ of (\ref{DeTurck4}) defined on $[0,T)$ for some $T >0$. This will give us a solution $g(t)$ of (\ref{DeTurck1}).
	
	Finally, if we consider again the time dependent $\mathcal{G}$-invariant one-form $B$ that was used in the Ricci-DeTurck flow, the dual vector field $B^{\sharp}$ generates a differentiable equivalence $\Psi_{t}$ for short enough time $t$  as described in Subsection~\ref{subsec:Gvectorfields}. Furthermore, the formula
\begin{eqnarray*}
	\frac{d}{dt}\arrowvert_{t=t_0} \Psi_{t}^{\ast}\omega_t= \Psi_{t_0}^{\ast}\left(\mathcal{L}_{X}\omega + \frac{d}{dt}\arrowvert_{t=t_0}\omega_{t} \right)
\end{eqnarray*}
will hold for any time-dependent $\mathcal{G}$-invariant tensor $\omega_t$. Applying this to the solution $g(t)$ of (\ref{DeTurck1}), we see that $g^{\prime}(t)=\Psi_{t}^{\ast}g(t)$ is a solution of the Ricci flow. This completes the proof.
\end{proof}

	Just as in the closed manifold case, one can show that if $(\mathcal{G},M)$ is a closed groupoid with compact connected space of orbits $W=M/\mathcal{G}$ and if $g(t)$ is a solution to the Ricci flow on a maximal time interval $[0,T)$ with $T < \infty$, then 
	\begin{eqnarray*}
	sup_{M} |Rm|(.,t) \rightarrow \infty
	\end{eqnarray*}
	as $t \uparrow T$.
		The proof is the same. Indeed, we start by assuming that we have a uniform bound on the curvature and show that this implies that the Ricci flow solution can be extended past the time $T$, thus contradicting the maximality of $[0,T)$. The uniform bound on the curvature implies that the Riemannian metrics $g(t)$ are uniformly equivalent. More precisely, for some $K>0$, we will have
	\begin{eqnarray*}
	e^{-2Kt}g(0) \leq g(t) \leq e^{2Kt}g(0)
	\end{eqnarray*}
	for all $t \in [0,T]$. This implies that $g(t)$ can be extended continuously to the time interval $[0,T]$. As we mentioned in Section 3, the Bernstein-Bando-Shi global curvature estimates will hold in the case of a closed groupoid with compact connected space of orbits. Then, just as in the closed manifold case, we use these estimates to show that the extension of $g(t)$ to the closed interval $[0,T]$ is smooth. We can now take $g(T)$ as the initial metric in the short-time existence theorem in order to extend the flow to a Ricci flow on $[0,T+\epsilon)$ for some $\epsilon >0$ giving us the contradiction.

\section{The $\mathcal{F}$-functional and Ricci solitons on groupoids}
Let $(\mathcal{G},M)$ be a closed groupoid with compact connected orbit space $W=M/\mathcal{G}$. As we mentioned in the Introduction, $\mathcal{M}_{\mathcal{G}}$ will denote the space of $\mathcal{G}$-invariant Riemannian metrics on $M$ and we will simply denote by $\rho$ an element of the family of Haar systems $\Xi$ defined in Subsection~\ref{subsec:evolution-formulas}.

We consider the $\mathcal{F}$-functional $\mathcal{F}:  \mathcal{M}_{\mathcal{G}} \times \Xi  \rightarrow \mathbb{R}$ by
\begin{eqnarray}
\mathcal{F}(g, \rho) =\int_{M} \left(R + |\theta |^2 \right)\,  \phi^2d\mu_{g}  \label{Ffunctional}
\end{eqnarray}
where $R$ is the scalar curvature of the metric $g$, $d\mu_{g}$ the Riemannian density, $\theta$ the mean curvature form for $\rho$ and $\phi$ a corresponding nonnegative cutoff function.                         

  If we fix a representative $\rho_{0}$ of $\Xi$ and induce a bijection $C^{\infty}_{\mathcal{G}}(M) \rightarrow \Xi$ as described at the end of Subsection~\ref{subsec:evolution-formulas}, this can be viewed as a functional on $\mathcal{M}_{\mathcal{G}} \times C^{\infty}_{\mathcal{G}}(M)$
\begin{eqnarray}
\mathcal{F}(g, f) =\int_{M} \left(R + |\nabla f + \theta_0 |^2 \right)e^{-f} \,  \phi_0^2 d\mu_{g}  \label{Ffunctional2} 
 \end{eqnarray}
where $\theta_0$ is the mean curvature class for $\rho_{0}$ and $\phi_{0}$ a compatible cutoff function. Notice that, in the case of Example~\ref{groupExample}, this reduces to the usual $\mathcal{F}$-functional on a closed Riemannian manifold.

Before computing the variation of $\mathcal{F}$, we define, for convenience, a modified divergence operator by
\begin{eqnarray*}
\underline{\mathrm{div}}{\alpha}= \mathrm{div}\alpha -\iota_{\theta^{\sharp}}\alpha
\end{eqnarray*}
for any tensor $\alpha$ on $M$. Then the integration by parts formula~(\ref{integpart}) can now be written as 
\begin{eqnarray}
\int_{M} \langle \underline{\mathrm{div}}(\alpha), \omega \rangle \, \phi^2 d\mu_{g} &=& -\int_{M} \langle \alpha, \nabla \omega\rangle \, \phi^2 d\mu_{g}  \label{integpartdos}
\end{eqnarray} 
for any $\mathcal{G}$-invariant tensors $\alpha$ and $\omega$ of type $(r,s)$ and $(r-1,s)$ respectively. We also define the modified Laplacian operator as $\underline{\Delta}=\underline{\mathrm{div}} \circ \nabla$ and we get the formula
\begin{eqnarray}
\int_{M} \langle \underline{\Delta}(\alpha), \chi \rangle \, \phi^2 d\mu_{g} &=& -\int_{M} \langle \nabla \alpha, \nabla \chi\rangle \, \phi^2 d\mu_{g}  \label{integparttres}
\end{eqnarray}
for any $\mathcal{G}$-invariant tensors $\alpha$ and $\chi$ of type $(r,s)$.
  
	Now suppose that we have the variations $\delta g =v$ and $\delta (d\rho) = \beta d\rho $ where $v=v_{ij}$ is a $\mathcal{G}$-invariant symmetric $(2,0)$-tensor and where $\beta$ is a $\mathcal{G}$-invariant function on $M$.

	\begin{ppr}
	The variation of the $\mathcal{F}$-functional is given by
		\begin{eqnarray*}
	\delta \mathcal{F}&=&-\frac{1}{2}\int_{M} \langle 2Ric + 2 \nabla \theta , v \rangle \,\phi^2d\mu_{g} \\
	&&+  \int_{M} (2 \underline{\mathrm{div}}(\theta) + |\theta |^2 + R)\left(\frac{V}{2}-\beta\right) \,\phi^2d\mu_{g}.
	\end{eqnarray*}
	or equivalently
	\begin{eqnarray*}
	\delta \mathcal{F}&=&-\frac{1}{2}\int_{M} \langle 2Ric + \mathcal{L}_{(\theta)^{\sharp}}g , v \rangle \,\phi^2d\mu_{g} \\
	&&+  \int_{M} (2\mathrm{div}(\theta)-|\theta|^2 + R)\left(\frac{V}{2}-\beta\right)\,\phi^2d\mu_{g}.
	\end{eqnarray*}
	\end{ppr}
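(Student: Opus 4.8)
The plan is to reduce the computation to a weighted version of Perelman's first-variation calculation, using the groupoid integration-by-parts identities (\ref{integpartdos}) and (\ref{integparttres}) in place of the ordinary one. The natural starting point is the measure-variation formula (\ref{time-derivative}): applying it with integrand $R+|\theta|^2$ gives at once
\begin{eqnarray*}
\delta\mathcal{F} = \int_M \delta\left(R + |\theta|^2\right)\,\phi^2 d\mu_{g} + \int_M \left(R + |\theta|^2\right)\left(\frac{V}{2} - \beta\right)\,\phi^2 d\mu_{g},
\end{eqnarray*}
where $V = \mathrm{Tr}_g(v)$. This already accounts for the factor $\frac{V}{2}-\beta$ multiplying $R+|\theta|^2$ in the stated answer, so the work reduces to evaluating $\int_M \delta(R+|\theta|^2)\,\phi^2 d\mu_{g}$.

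For the pointwise variations I would use the standard formulas $\delta R = -\Delta V + \mathrm{div}(\mathrm{div}\,v) - \langle \mathrm{Ric}, v\rangle$ and $\delta(g^{ij}) = -v^{ij}$, together with the fact that varying the Haar system by $\delta(d\rho)=\beta\,d\rho$ changes the mean curvature form by $\delta\theta = d\beta$ while leaving it otherwise independent of $g$. This last point is exactly the content of the evolution equation (\ref{theta-evolution}); equivalently it follows from (\ref{cutoffprop2}), which expresses $\theta$ in terms of $\rho$ and a cutoff function alone. Consequently $\delta|\theta|^2 = -v(\theta^\sharp,\theta^\sharp) + 2\langle d\beta,\theta\rangle$.

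The heart of the argument is the integration by parts, where the weight $\phi^2 d\mu_{g}$ forces mean-curvature corrections to appear. Pairing (\ref{integparttres}) against the constant function gives $\int_M \underline{\Delta}V\,\phi^2 d\mu_{g}=0$, and since $\Delta = \underline{\Delta} + \iota_{\theta^\sharp}\!\circ\nabla$ this yields $\int_M(-\Delta V)\,\phi^2 d\mu_{g} = -\int_M \langle\theta, dV\rangle\,\phi^2 d\mu_{g}$. Likewise $\int_M \mathrm{div}(\mathrm{div}\,v)\,\phi^2 d\mu_{g} = \int_M \langle\theta,\mathrm{div}\,v\rangle\,\phi^2 d\mu_{g}$, and one further application of (\ref{integpartdos}) rewrites this as $-\int_M \langle v,\nabla\theta\rangle\,\phi^2 d\mu_{g} + \int_M v(\theta^\sharp,\theta^\sharp)\,\phi^2 d\mu_{g}$. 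The key cancellation is that this quadratic term $v(\theta^\sharp,\theta^\sharp)$ is precisely annihilated by the corresponding term in $\delta|\theta|^2$; the surviving $\langle\mathrm{Ric},v\rangle$ and $\langle\nabla\theta,v\rangle$ terms assemble into $-\tfrac12\langle 2\mathrm{Ric}+2\nabla\theta, v\rangle$, while the leftover $dV$ and $d\beta$ terms, after a last integration by parts via (\ref{integpartdos}), combine into $2\,\underline{\mathrm{div}}(\theta)\left(\frac{V}{2}-\beta\right)$. This produces the first displayed form.

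I expect the main obstacle to be exactly this bookkeeping of the $\theta$-correction terms: because $\phi^2 d\mu_{g}$ is not the bare Riemannian measure, every ordinary integration by parts introduces an interior-product term $\iota_{\theta^\sharp}$, and the result collapses to the clean stated shape only because the quadratic corrections $v(\theta^\sharp,\theta^\sharp)$ cancel and the remaining corrections reorganize into the modified divergence $\underline{\mathrm{div}}$. Finally, to obtain the second form I would use that $v$ is symmetric, so that $2\langle\nabla\theta,v\rangle = \langle\mathcal{L}_{\theta^\sharp}g, v\rangle$, together with the identity $\underline{\mathrm{div}}(\theta)=\mathrm{div}(\theta)-|\theta|^2$, which converts $2\,\underline{\mathrm{div}}(\theta)+|\theta|^2$ into $2\,\mathrm{div}(\theta)-|\theta|^2$.
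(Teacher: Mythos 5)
Your proposal is correct and follows essentially the same route as the paper: both start from the measure-variation formula (\ref{time-derivative}), use the standard formula for $\delta R$ together with $\delta(|\theta|^2)=-v(\theta^{\sharp},\theta^{\sharp})+2\langle d\beta,\theta\rangle$, and rely on the modified integration-by-parts identities (\ref{integpartdos})--(\ref{integparttres}) to produce the same cancellation of $v(\theta^{\sharp},\theta^{\sharp})$ and the reorganization into $\underline{\mathrm{div}}(\theta)$. The only difference is cosmetic: the paper rewrites $\Delta V$ and $\mathrm{div}(\mathrm{div}\,v)$ pointwise in terms of the modified operators before integrating, whereas you absorb the $\iota_{\theta^{\sharp}}$ corrections directly under the integral sign; the net computation is identical.
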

	\begin{proof}
	Note that since $\theta$ is a closed 1-form, the tensor $\nabla \theta $ is a symmetric (2,0)-tensor. So the expression above makes sense.
	
	The proof of the proposition is a straightforward computation. If we apply the formula~(\ref{time-derivative}), we get 
	\begin{eqnarray*}
	\delta \mathcal{F}&=& \int_{M} \delta \left(R + |\theta |^2 \right) \,\phi^2d\mu_{g} + \int_{M} \left(R + |\theta |^2 \right)\left(\frac{V}{2}-\beta \right)\phi^2d\mu_{g}.
	\end{eqnarray*}
It is known that
\begin{eqnarray*}
\delta R= -\langle Ric,v \rangle - \Delta V + \mathrm{div}(\mathrm{div}(v)).
\end{eqnarray*}
We will rewrite this in terms of the modified divergence operator and the modified Laplacian operator. A simple computation gives us
\begin{eqnarray*}
\Delta V &=& \underline{\Delta}V  + \langle \theta, \nabla V \rangle \\
\mathrm{div}(\mathrm{div}(v))&=& \underline{\mathrm{div}}(\underline{\mathrm{div}}(v)) + 2 \langle \theta, \underline{\mathrm{div}}(v) \rangle + \langle \nabla \theta, v \rangle + v(\theta,\theta).
\end{eqnarray*}
So we get
\begin{eqnarray}
\delta R &=& -\langle Ric,v \rangle -\underline{\Delta}V  - \langle \theta, \nabla V \rangle + \underline{\mathrm{div}}(\underline{\mathrm{div}}(v))   \label{scalarvariation} \\
        & & + 2 \langle \theta, \underline{\mathrm{div}}(v) \rangle  + \langle \nabla \theta, v \rangle + v(\theta,\theta). \notag
\end{eqnarray}
On the other hand, the variation of $|\theta |^2$ is given by 
\begin{eqnarray*}
\delta(|\theta |^2)&=& \delta \left( g^{ij}\theta_{i}\theta_{j} \right) \\
                   &=&  -v^{ij}\theta_{i}\theta_{j}+2  g^{ij}\theta_{i}\nabla_{j} \beta \\
                   &=&  -v(\theta, \theta) +2 \langle \nabla \beta, \theta \rangle.
\end{eqnarray*}
Combining this with (\ref{scalarvariation}), we can rewrite the variation of $\mathcal{F}$ as 
\begin{eqnarray*}
\int_{M}\left(-\langle Ric, v\rangle -\underline{\Delta}V-\langle \theta , \nabla V \rangle + \underline{\mathrm{div}}(\underline{\mathrm{div}}(v)) \right)\phi^2d\mu_{g} \\
+ \int_{M}\left( 2 \langle \theta, \underline{\mathrm{div}}(v) \rangle  + \langle \nabla \theta, v \rangle + 2 \langle \nabla \beta, \theta \rangle \right) \phi^2d\mu_{g} \\
+\int_{M}(R + |\theta |^2 )\left(\frac{V}{2}-\beta \right)\phi^2d\mu_{g}. 
 \end{eqnarray*}									
Using the formulas (\ref{integpartdos}) and (\ref{integparttres}),we get
	
	\begin{eqnarray*}
	\int_{M} \underline{\Delta}V  \,\phi^2d\mu_{g}= \int_{M} \underline{\mathrm{div}}(\underline{\mathrm{div}}(v))\,\phi^2d\mu_{g} = 0 
	\end{eqnarray*}
	
	and 
	\begin{eqnarray*}
	-\int_{M} \langle \theta , \nabla V \rangle \,\phi^2d\mu_{g}&=& \int_{M} V  \underline{\mathrm{div}}(\theta)\phi^2d\mu_{g} \\
	2\int_{M} \langle \theta, \underline{\mathrm{div}}(v) \rangle \,\phi^2d\mu_{g} &=& -2\int_{M}  \langle \nabla \theta , v \rangle \, \phi^2d\mu_{g} \\
	2\int_{M} \langle \nabla \beta, \theta \rangle \,\phi^2d\mu_{g} &=& -2 \int_{M} \beta \underline{\mathrm{div}}(\theta) \,\phi^2d\mu_{g}. \\
	\end{eqnarray*}
																						
	So we end up with
\begin{eqnarray*}
\delta \mathcal{F}&=&	-\int_{M} \langle Ric + \nabla \theta, v \rangle \,\phi^2d\mu_{g}  \\
&&+ \int_{M} (2 \underline{\mathrm{div}}(\theta) + |\theta |^2 +R)\left(\frac{V}{2}-\beta\right)\,\phi^2d\mu_{g}.
\end{eqnarray*}	

This completes the proof.	
	\end{proof}	
	We now impose the condition
	\begin{eqnarray}
	\frac{V}{2}-\beta=0. \label{impose}
	\end{eqnarray}
	The variaton of the functional $\mathcal{F}$ is now
	\begin{eqnarray}
	\delta \mathcal{F}=-\frac{1}{2}\int_{M} \langle 2Ric + \mathcal{L}_{\theta^{\sharp}}g , v \rangle   \,\phi^2d\mu_{g}.
	\end{eqnarray}
	If we define a Riemannian metric on $\mathcal{M}_{\mathcal{G}}$ by 
\begin{eqnarray}
\langle v_{ij}, v_{ij} \rangle_{g} = \frac{1}{2} \int_{M} v^{ij}v_{ij} \phi^2 d\mu_{g}
\end{eqnarray}
the gradient flow of $\mathcal{F}$ on $\mathcal{M}_{\mathcal{G}}$ is given by
\begin{eqnarray}
\frac{\partial g}{\partial t} = -2(Ric(g) + \nabla \theta ) =-2Ric-\mathcal{L}_{\theta^{\sharp}}g						
 \end{eqnarray}

 and we will have
\begin{eqnarray}
\frac{d}{dt}\mathcal{F}=2 \int_{M} |Ric(g) + \frac{1}{2}\mathcal{L}_{\theta^{\sharp}}g|^2 \phi^2 d\mu_{g} \geq 0.
\end{eqnarray}
This is zero when 
\begin{eqnarray}
Ric(g) + \frac{1}{2}\mathcal{L}_{ \theta^{\sharp}}g=0.
\end{eqnarray}
This is the equation of a steady soliton. We will call such a soliton a \emph{groupoid steady soliton}. If $\theta$ is exact, this reduces to a gradient steady soliton.
  
	Let $\Psi_{t}$ be the time dependent self-equivalences of $(\mathcal{G},M)$ generated by the time dependent $\mathcal{G}$-invariant vector field $\theta^{\sharp}$.  We set $g^{\prime}(t)=\Psi_{t}^{\ast}g(t)$ and $\theta^{\prime}(t) = \Psi_{t}^{\ast}\theta(t)$. We will then have 
	\begin{eqnarray}
	\frac{\partial g^{\prime}}{\partial t}&=&\Psi_{t}^{\ast} \left( \mathcal{L}_{\theta^{\sharp}}g +  \frac{\partial g}{\partial t} \right)  \notag\\
	\Rightarrow \frac{\partial g^{\prime}}{\partial t}&=& -2Ric(g^{\prime})  \label{RicciMio}.
	\end{eqnarray}

The form $\theta^{\prime}$ is the mean curvature form of the pullback Haar system $\rho^{\prime}= \Psi^{\ast} \rho$. If we denote by $\beta^{\prime}$ the variation corresponding to $\rho^{\prime}$, we have
\begin{eqnarray*}
\frac{\partial \theta^{\prime}}{\partial t}= d\beta^{\prime}.
\end{eqnarray*}
On the other hand, we have
\begin{eqnarray*}
\frac{\partial \theta^{\prime}}{\partial t} &=& \Psi_{t}^{\ast} \left( \mathcal{L}_{\theta^{\sharp}}\theta +  \frac{\partial \theta}{\partial t} \right) \\
                                            &=& \Psi_{t}^{\ast} \left( d(|\theta|^2)  + d\beta \right) \\
																						&=& \Psi_{t}^{\ast} \left(  d(|\theta|^2 -R -\mathrm{div}(\theta)) \right) \\
																						&=& d \left(|\theta^{\prime}|^2 -R^{\prime} -\mathrm{div}(\theta^\prime) \right).
\end{eqnarray*}

Hence, we must have $\beta^{\prime} = |\theta^{\prime}|^2 -R^{\prime} -\mathrm{div}(\theta^\prime) + c(t)$ where $c(t)$ is some time dependent constant. Now, since the condition (\ref{impose}) implies that 
\begin{eqnarray*}
\int_{M} \,\phi^2d\mu_{g} = constant
\end{eqnarray*}
for any nonnegative cutoff function $\phi$ for the Haar system $\rho$, we will also have 
\begin{eqnarray*}
\int_{M} \, (\phi^\prime)^2d\mu_{g^\prime} = constant
\end{eqnarray*}
for any nonnegative cutoff function $\phi^{\prime}$ for the Haar system $\rho^{\prime}$. Differentiating the last equation with respect to time, we get
 \begin{eqnarray*}
\int_{M} \, \left( -R^{\prime}- \beta^{\prime} \right) (\phi^\prime)^2d\mu_{g^\prime} &=& 0 \\
\int_{M} \, \left( -R^{\prime}-|\theta^{\prime}|^2 + R^{\prime} +\mathrm{div}(\theta^\prime) -c(t) \right) (\phi^\prime)^2d\mu_{g^\prime} &=& 0 \\
\int_{M} \, \left( \underline{\mathrm{div}}(\theta^\prime) -c(t) \right) (\phi^\prime)^2d\mu_{g^\prime} &=& 0 \\
-c(t) \int_{M} \,(\phi^\prime)^2d\mu_{g^\prime} &=& 0
\end{eqnarray*}
So $c(t)$ is equal to zero and we get
\begin{eqnarray}
\beta^{\prime} = |\theta^{\prime}|^2 -R^{\prime} -\mathrm{div}(\theta^\prime). \label{betaFlow}
\end{eqnarray}
We can relate this to a backward heat equation as follows. We can fix a smooth family of representative Haar systems $\rho_{0}(t)$ which induce time dependent bijections $C^{\infty}_{\mathcal{G}}(M) \rightarrow \Xi$. We can then write the Haar system $\rho^{\prime}$ as $d\rho^{\prime}= e^{f^{\prime}} d\rho_{0}$ for some smooth time dependent $\mathcal{G}$-invariant  function $f^{\prime}$. If we denote by $\beta_{0}$ the variation of the fixed time dependent Haar system $\rho_{0}(t)$ and by $\theta_{0}$ the corresponding mean curvature form, then solving the equation~(\ref{betaFlow}) corresponds to solving the equation
\begin{eqnarray}
\frac{\partial f^{\prime}}{\partial t}&=&  |\nabla f^{\prime} + \theta_{0}|^2 -R^{\prime} -\mathrm{div} \left(\nabla f^{\prime} + \theta_{0} \right) -\beta_{0} \notag \\
                                      &=&  -\Delta f^{\prime} +|\nabla f^{\prime}|^2 + 2\langle \nabla f^{\prime}, \theta^{\prime} \rangle -\mathrm{div}(\theta_{0}) + |\theta_{0}|^2 -R^{\prime}  -\beta_{0}
\label{fevol}
\end{eqnarray}
which is a backward heat equation. Indeed, we can rewrite it as 
\begin{eqnarray}
\frac{\partial e^{-f^{\prime}}}{\partial t}&=& -\underline{\Delta}e^{-f^{\prime}} + \langle \nabla e^{-f^{\prime}}, \theta_{0} \rangle +(R^{\prime}+\underline{\mathrm{div}}(\theta_0) +\beta_0)e^{-f^{\prime}}. \label{fMio}
\end{eqnarray}

This can be solved as outlined in \cite{kleiner}. We first solve (\ref{RicciMio}) on some time interval $[t_1,t_2]$ and then solve (\ref{fMio}) backwards in time on $[t_1,t_2]$. The latter operation can be done by using the Uhlenbeck trick of Subsection~\ref{subsec:evolution-formulas}. We construct the space $\mathbf{F}$ with quotient space $Z=\mathbf{F}/\hat{\mathcal{G}}$. For each $t \in [t_1,t_2]$, we will have a commutative diagram 
\begin{eqnarray*}
 (\mathbf{F},\hat{g}(t)) &\xrightarrow{\hat{\sigma}}& (Z,\overline{g}(t)) \\
 \pi \downarrow &                & \downarrow \hat{\pi}   \\
 (M,g^{\prime}(t))   &\xrightarrow{\sigma}      & W. 
 \end{eqnarray*}
 The maps $\pi$ and $\hat{\pi}$ correspond to taking $O(n)$- quotients. Furthermore, the map $\pi$ is a $\mathcal{G}$-equivariant  Riemannian submersion and the map $\hat{\sigma}$ is an $O(n)$-equivariant Riemannian submersion for each $t \in [t_1,t_2]$. For simplicity, we shall assume that the fixed time dependent Haar system $\rho_{0}$ is the Haar system generated by the metric $g(t)$. We can view the functions $f$ and $\Lambda =(R^{\prime}+\underline{\mathrm{div}}(\theta_0) +\beta_0)$ as $O(n)$-invariant functions on $Z$. Recall that the form $\theta^{\prime}$ was induced by the mean curvature form of the Riemannian submersion $\hat{\sigma}$ which is a $\mathcal{G}$-basic and $O(n)$-basic one form. In particular, this form induces a corresponding $O(n)$-invariant form on the space $Z$. For simplicity, we will also denote this form by $\theta_{0}$. 

   It follows from \cite{gilkey}[Lemma 4.2.4] that, if $ q: (X_1,g_1) \rightarrow (X_0,g_0)$ is a Riemannian submersion and if $k$ is a smooth function on $X_0$, then
\begin{eqnarray*}
\Delta_{g_1}(q^{\ast} k) = q^{\ast}( \Delta_{g_0}k ) + \langle \alpha , \nabla(q^\ast k) \rangle
\end{eqnarray*}
where $\alpha$ is the mean curvature form of the submersion. Applying this to the submersions $\pi$ and $\hat{\sigma}$ and using the fact that the mean curvature form of the Riemannian submersion $\pi$ is equal to zero for each $t$, we see that Equation~(\ref{fMio}) now corresponds to the $O(n)$-invariant heat equation on $Z$
\begin{eqnarray}
\frac{\partial v}{\partial t}&=& -\Delta_{\overline{g}} v + \langle \nabla v, \theta_{0} \rangle + \Lambda v. \label{fMio2}
\end{eqnarray}
where $v= e^{-f}$. Since $Z$ is compact, we can start with a value $v_2=e^{-f_2}$ at time $t_2$ and solve this equation backward in time on $[t_1,t_2]$ to get an $O(n)$-invariant solution $u(t)$ for (\ref{fMio2}). We can show that $u(t) > 0$. So $f(t)= -\ln(v(t))$ is well defined and will be a solution for (\ref{fMio}). Using this method, we get the Haar system $\rho^{\prime}(t)$ having the variation specified by (\ref{betaFlow}). Finally, it is clear that we will have 
\begin{eqnarray*}
\frac{d}{dt}\mathcal{F}(g^{\prime}(t),\rho^{\prime}(t))=2 \int_{M} |Ric(g^{\prime}) + \frac{1}{2}\mathcal{L}_{(\theta^\prime)^{\sharp}}g^{\prime}|^2 (\phi^\prime)^2 d\mu_{g^\prime} \geq 0
\end{eqnarray*}
for any time dependent cutoff function $\phi^{\prime}$ compatible with $\rho^{\prime}$. This proves Theorem~\ref{monotonos}.

Just as in the closed manifold case, we have the following result.

\begin{ppr}\label{misere}
Given a $\mathcal{G}$-invariant metric $g$, there is a unique minimizer Haar system $\tilde{\rho}$ of $\mathcal{F}(g,\rho)$ under the constraint $\int_{M} \, \phi^2d\mu_{g}=1$ for any compatible cutoff function $\phi$.
\end{ppr}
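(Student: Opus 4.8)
The plan is to reduce the minimization to finding the bottom eigenvalue of a Schr\"odinger-type operator on the closed manifold $Z$, in direct analogy with Perelman's treatment of the $\lambda$-invariant. First I would fix a representative $\rho_{0}\in\Xi$ with mean curvature form $\theta_{0}$ and a compatible cutoff $\phi_{0}$, and use the bijection $C^{\infty}_{\mathcal{G}}(M)\to\Xi$ to write an arbitrary competitor as $d\rho=e^{f}d\rho_{0}$, so that $\theta=\theta_{0}+df$, $\phi^{2}=e^{-f}\phi_{0}^{2}$, and $\mathcal{F}$ takes the form (\ref{Ffunctional2}); the constraint becomes $\int_{M}e^{-f}\phi_{0}^{2}d\mu_{g}=1$. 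Substituting $u=e^{-f/2}$ and expanding $|\nabla f+\theta_{0}|^{2}e^{-f}=4|\nabla u|^{2}-2\langle\theta_{0},\nabla u^{2}\rangle+|\theta_{0}|^{2}u^{2}$, the middle term is integrated by parts via (\ref{integpartdos}), which is legitimate for the compatible cutoff $\phi_{0}$ by Corollary~\ref{byparts-cr2}. This recasts the problem as minimizing
\[
\mathcal{F}(g,\rho)=\int_{M}\bigl(4|\nabla u|^{2}+Q\,u^{2}\bigr)\,\phi_{0}^{2}d\mu_{g},\qquad Q:=R+|\theta_{0}|^{2}+2\,\underline{\mathrm{div}}(\theta_{0}),
\]
over $u\in C^{\infty}_{\mathcal{G}}(M)$ with $u>0$ and $\int_{M}u^{2}\phi_{0}^{2}d\mu_{g}=1$, where $Q$ is $\mathcal{G}$-invariant and (consistently with the second variation formula) equals $R-|\theta_{0}|^{2}+2\,\mathrm{div}(\theta_{0})$.

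Next I would transfer this to the compact manifold $Z$. By Subsection~\ref{subsec:integration-of-Gfunctions}, a $\mathcal{G}$-invariant function corresponds to an $O(n)$-invariant function on $Z$ with $\int_{M}h\,\phi_{0}^{2}d\mu_{g}=\int_{Z}h\,d\mu_{\overline{g}}$. Using that $\pi$ and $\hat{\sigma}$ in the diagram (\ref{CommutativeDiagram}) are Riemannian submersions and that $\pi^{\ast}u$ is constant along both families of fibers, the horizontal-gradient computation shows that $|\nabla u|_{g}^{2}$ corresponds to $|\nabla\overline{u}|_{\overline{g}}^{2}$; equivalently, the modified Laplacian $\underline{\Delta}$ of (\ref{integparttres}) corresponds to the ordinary Laplacian $\Delta_{\overline{g}}$ on $Z$. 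Hence, writing $\overline{Q}$ for the function on $Z$ induced by $Q$, the functional becomes the Rayleigh quotient of the self-adjoint elliptic operator $\mathcal{L}:=-4\Delta_{\overline{g}}+\overline{Q}$ acting on $O(n)$-invariant functions, under $\int_{Z}\overline{u}^{2}d\mu_{\overline{g}}=1$.

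Since $Z$ is closed and $\overline{Q}$ is smooth, $\mathcal{L}$ has discrete spectrum bounded below, and the operator preserves the closed subspace $\mathcal{H}$ of $O(n)$-invariant functions because $\overline{g}$ and $\overline{Q}$ are $O(n)$-invariant. By the direct method the infimum $\lambda_{1}$ over $\mathcal{H}$ is attained by a smooth $\overline{u}$; testing the Euler--Lagrange condition against $\mathcal{H}$ and using that $\mathcal{L}\overline{u}-\lambda_{1}\overline{u}\in\mathcal{H}$ forces $\overline{u}$ to be a genuine eigenfunction of $\mathcal{L}$. Replacing $\overline{u}$ by $|\overline{u}|$ (same Rayleigh quotient) and applying the strong maximum principle on each component of $Z$ shows $\overline{u}$ cannot change sign and is nowhere zero on any component on which it is nontrivial; since $W$ is connected, $O(n)$ permutes the components of $Z$ transitively, so an $O(n)$-invariant eigenfunction that vanishes on one component vanishes everywhere. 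This yields a strictly positive minimizer and, by the usual argument that two positive bottom eigenfunctions must be proportional, a one-dimensional bottom eigenspace within $\mathcal{H}$. The positive $\overline{u}$ descends to $\tilde{u}\in C^{\infty}_{\mathcal{G}}(M)$, and $\tilde{f}=-2\ln\tilde{u}$ defines $\tilde{\rho}=e^{\tilde{f}}d\rho_{0}$; uniqueness is immediate since any constrained minimizer is a bottom eigenfunction of $\mathcal{L}$ and the normalization pins down the positive representative.

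The main obstacle I anticipate is the transfer step of the second paragraph: verifying cleanly, through the two Riemannian submersions $\pi$ and $\hat{\sigma}$, that $|\nabla u|_{g}^{2}$ and $\underline{\mathrm{div}}$/$\underline{\Delta}$ descend to the honest Riemannian quantities $|\nabla\overline{u}|_{\overline{g}}^{2}$ and $\Delta_{\overline{g}}$ on $Z$, together with the bookkeeping needed to handle the possible disconnectedness of $Z$ via the connectedness of $W$. Once the problem is genuinely placed on the closed manifold $Z$, the existence, positivity, and simplicity of the ground state are entirely standard.
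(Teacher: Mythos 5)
Your proposal is correct and follows essentially the same route as the paper: substitute $\kappa=e^{-f/2}$, integrate by parts to exhibit $\mathcal{F}$ as the Rayleigh quotient of the Schr\"odinger operator $-4\Delta_{\overline{g}}+\bigl(R+2\,\mathrm{div}(\theta_{0})-|\theta_{0}|^{2}\bigr)$ on the compact quotient $Z$, and invoke existence and uniqueness of the positive normalized ground state. The only difference is that you spell out the transfer of $|\nabla u|^{2}$ to $Z$ and the $O(n)$-invariance/connectedness bookkeeping, which the paper leaves implicit.
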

\begin{proof}
We can rewrite the $\mathcal{F}$-functional as 
\begin{eqnarray*}
\mathcal{F}(g, \rho) =\int_{M} \left(R + 2 \mathrm{div}(\theta) -|\theta |^2 \right)\,  \phi^2d\mu_{g}  
\end{eqnarray*}
Just like before, we fix a representative Haar system $\rho_0$ and write the functional as:
\begin{eqnarray*}
\mathcal{F}(g, f) =\int_{M} \left(R + 2 \mathrm{div}(\theta_{0} + \nabla f) -|\theta_0 + \nabla f |^2 \right)\,  e^{-f} \phi_{0}^2d\mu_{g}  
\end{eqnarray*}
The constraint $\int_{M} \, \phi^2d\mu_{g}=1$ now becomes the condition $\int_{M} \, e^{-f} \phi_{0}^2d\mu_{g}=1$ for the $\mathcal{G}$-invariant smooth function $f$. For convenience again, we will take $\rho_0$ to be the Haar system generated by $g$ and as before we consider the orthonormal frame bundle $F_{g}$ generated by $g$ and the corresponding space of orbits $Z=F_{g}/\hat{\mathcal{G}}$  with Riemannian metric $\overline{g}$. Just as in the proof of \cite{kleiner}[Proposition 7.1], we set $\kappa=e^{-\frac{f}{2}}$.
We can write the $\mathcal{F}$-functional as an integral over $Z$.
\begin{eqnarray*}
\mathcal{F}= \int_{Z} \left( -4\Delta_{\overline{g}}(\kappa)  + B \kappa \right) d\mu_{\overline{g}}
\end{eqnarray*}
where $\kappa$ and $B= (2 \mathrm{div}(\theta_0) - |\theta_0|^2  + R)$ are viewed as $O(n)$-invariant functions on $Z$.

The constraint equation becomes $\int_{Z} \kappa^2 \,d\mu_{\overline{g}}=1$. The minimum of $\mathcal{F}$ is given by the smallest eigenvalue $\lambda$ of $-4 \Delta_{\overline{g}} + B$ and the minimzer $\tilde{\kappa} = e^{-\frac{\tilde{f}}{2}}$ is a corresponding normalized eigenvector. Since this operator is a Schr\"{o}dinger operator, there is a unique normalized positive eigenvector. Finally, the equation 
\begin{eqnarray*}
-4\Delta_{\overline{g}}(\kappa)  + B \kappa  = \lambda \kappa
\end{eqnarray*}
translates to
\begin{eqnarray*}
R + 2 \mathrm{div}(\theta) - |\theta|^2 = \lambda.
\end{eqnarray*}

\end{proof}
We define the $\lambda$-functional on $\mathcal{M}_{\mathcal{G}}$ by $\lambda(g)=\mathcal{F}(g,\tilde{\rho})$ where $\tilde{\rho}$ is the Haar system in the previous proposition.
\begin{ppr}
If $g(t)$ is a solution to the Ricci flow then $\lambda(g(t))$ is nondecreasing in $t$.
\end{ppr}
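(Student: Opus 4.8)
The plan is to run Perelman's monotonicity argument for the lowest eigenvalue, using Theorem~\ref{monotonos} as the monotonicity engine and Proposition~\ref{misere} to guarantee that the infimum defining $\lambda$ is actually attained. Fix two times $t_1 < t_2$ in the interval on which the Ricci flow $g(t)$ is defined; it suffices to prove $\lambda(g(t_1)) \leq \lambda(g(t_2))$.

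First I would let $\tilde{\rho}_2 \in \Xi$ be the minimizing Haar system for the metric $g(t_2)$ supplied by Proposition~\ref{misere}, so that $\mathcal{F}(g(t_2),\tilde{\rho}_2) = \lambda(g(t_2))$ and $\int_M \phi_2^2\, d\mu_{g(t_2)} = 1$ for a compatible cutoff function $\phi_2$. I would then solve the conjugate heat equation for the Haar system backward in time on $[t_1,t_2]$ with terminal condition $\rho(t_2) = \tilde{\rho}_2$. Writing $d\rho = e^{f} d\rho_0$ and setting $v = e^{-f}$, this is precisely the $O(n)$-invariant linear heat equation~(\ref{fMio2}) on the compact quotient $Z$, which admits a positive solution running backward from $t_2$ exactly as in the proof of Theorem~\ref{monotonos}. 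This produces a smooth family $\rho(t) \in \Xi$ on $[t_1,t_2]$ whose mean curvature form evolves by~(\ref{betaFlow}), i.e. by the second flow equation of Theorem~\ref{monotonos}.

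The key point is that this coupled flow preserves the normalization. Under $\partial_t g = -2Ric$ we have $V = \mathrm{Tr}_{g}(v) = -2R$, and the Haar evolution $\partial_t(d\rho) = \beta\, d\rho$ with $\beta = |\theta|^2 - R - \mathrm{div}(\theta)$ gives $\frac{V}{2} - \beta = \mathrm{div}(\theta) - |\theta|^2 = \underline{\mathrm{div}}(\theta)$. Applying the integration by parts formula~(\ref{integpartdos}) with $\omega \equiv 1$ then yields $\frac{d}{dt}\int_M \phi^2\, d\mu_{g} = \int_M \underline{\mathrm{div}}(\theta)\,\phi^2 d\mu_{g} = 0$, so the constraint $\int_M \phi(t)^2\, d\mu_{g(t)} = 1$ holds throughout $[t_1,t_2]$; this is exactly the computation already carried out in the proof of Theorem~\ref{monotonos} to show the constant $c(t)$ vanishes.

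Finally I would combine these facts. By Theorem~\ref{monotonos}, the map $t \mapsto \mathcal{F}(g(t),\rho(t))$ is nondecreasing on $[t_1,t_2]$. At $t_2$ the chosen Haar system is the minimizer, so $\mathcal{F}(g(t_2),\rho(t_2)) = \lambda(g(t_2))$, while at $t_1$ the feasible but not necessarily optimal system $\rho(t_1)$ satisfies the constraint, so $\lambda(g(t_1)) \leq \mathcal{F}(g(t_1),\rho(t_1))$ by definition of $\lambda$ as the constrained minimum. Chaining the inequalities gives
\[
\lambda(g(t_1)) \leq \mathcal{F}(g(t_1),\rho(t_1)) \leq \mathcal{F}(g(t_2),\rho(t_2)) = \lambda(g(t_2)),
\]
which is the claim. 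The main obstacle is not any single estimate but the backward-in-time solvability of the conjugate heat equation together with the strict positivity of its solution, so that $f = -\ln v$ is globally defined and $\rho(t)$ remains a genuine element of $\Xi$; both are handled by passing to the compact manifold $Z$ precisely as in the proof of Theorem~\ref{monotonos}.
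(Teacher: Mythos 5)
Your proposal is correct and follows essentially the same route as the paper: take the minimizer at $t_2$ from Proposition~\ref{misere}, solve the Haar system backward in time via the conjugate heat equation on the compact quotient $Z$, apply the monotonicity of Theorem~\ref{monotonos}, and use the definition of $\lambda$ as a constrained minimum at $t_1$. Your explicit check that the normalization $\int_M \phi^2\, d\mu_g = 1$ is preserved along the coupled flow is a detail the paper leaves implicit, but it is exactly the computation already carried out there to show $c(t)=0$.
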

\begin{proof}
The proof is exactly the same as in the closed manifold case. 
Given a solution of the Ricci flow on a time interval $[t_1,t_2]$, we take $\rho(t_2)$ to be the minimizer $\tilde{\rho}(t_2)$. So, $\lambda(t_2)=\mathcal{F}(g(t_2),\tilde{\rho}(t_2))$. As we described right before Proposition~\ref{misere}, we can solve for the Haar system $\rho(t)$ backwards in time. It follows from Theorem~\ref{monotonos} that $\mathcal{F}(g(t_1),\rho(t_1)) \leq \mathcal{F}(g(t_2),\rho(t_2))$. By the definition of $\lambda$, we have $\lambda(t_1) \leq \mathcal{F}(g(t_1),f(t_1))$. Hence $\lambda(t_1) \leq \lambda(t_2)$. This completes the proof.
\end{proof}
We will define a steady breather as a Ricci flow solution on an interval $[t_1,t_2]$ such that $g(t_2)=\Psi^{\ast}g(t_1)$ for some self-equivalence $\Psi$ of $(\mathcal{G},M)$.
Since the $\lambda$-functional is invariant under self-equivalences, we can apply the previous proposition to get the following corollary.
\begin{crly}
A steady breather is a groupoid steady soliton.
\end{crly}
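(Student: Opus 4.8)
The plan is to mimic Perelman's closed-manifold argument, replacing diffeomorphisms by self-equivalences and feeding on the two monotonicity statements already in hand. First I would record the two structural facts about $\lambda$. Just above the corollary it is asserted that $\lambda$ is invariant under self-equivalences (this follows from the identity $\int_{M} f\,\phi^2 d\mu_{g}=\int_{M}\Psi^{\ast}f\,\underline{\phi}^2 d\mu_{\Psi^{\ast}g}$ of Subsection~\ref{subsec:Gvectorfields}), so $\lambda(\Psi^{\ast}g(t_1))=\lambda(g(t_1))$; and the preceding proposition gives that $\lambda(g(t))$ is nondecreasing along the Ricci flow. The breather hypothesis $g(t_2)=\Psi^{\ast}g(t_1)$ then yields $\lambda(g(t_2))=\lambda(g(t_1))$, and combined with monotonicity this forces $\lambda(g(t))$ to be constant on $[t_1,t_2]$.

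Next I would use this constancy to push the monotonicity of $\mathcal{F}$ itself to equality. Take the minimizing Haar system $\tilde\rho(t_2)$ guaranteed by Proposition~\ref{misere} and solve the coupled Haar evolution backwards in time on $[t_1,t_2]$, exactly as described before that proposition, to obtain a family $\rho(t)\in\Xi$ satisfying the flow equations of Theorem~\ref{monotonos}. Using that $\lambda(t)$ is the pointwise minimum of $\mathcal{F}(g(t),\cdot)$ over Haar systems and that $\mathcal{F}(g(t),\rho(t))$ is nondecreasing by Theorem~\ref{monotonos}, I obtain the chain
\[
\lambda(t_1)\le \mathcal{F}(g(t_1),\rho(t_1))\le \mathcal{F}(g(t_2),\rho(t_2))=\lambda(t_2)=\lambda(t_1).
\]
Every inequality is therefore an equality, so $\mathcal{F}(g(t),\rho(t))$ is constant on $[t_1,t_2]$, and since its $t$-derivative is nonnegative and continuous, that derivative vanishes identically in $t$.

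Finally I would read off the soliton equation. By Theorem~\ref{monotonos} the vanishing derivative is $2\int_{M}|Ric(g)+\tfrac12\mathcal{L}_{\theta^{\sharp}}g|^2\,\phi^2 d\mu_{g}=0$ for each $t$, where $\theta$ is the mean curvature form of $\rho(t)$. Since $\phi^2 d\mu_{g}$ descends to the genuine positive measure $d\eta(g)$ on the compact orbit space $W$ (Subsection~\ref{subsec:integration-of-Gfunctions}), a nonnegative $\mathcal{G}$-invariant integrand with zero integral must vanish pointwise, giving $Ric(g)+\tfrac12\mathcal{L}_{\theta^{\sharp}}g=0$ on all of $M\times[t_1,t_2]$; this is precisely the groupoid steady soliton equation. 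I expect no serious obstacle here: the corollary is essentially formal once the machinery is in place. The only nontrivial enabling ingredient is the backward solvability of the Haar evolution used to construct $\rho(t)$, but that is exactly the conjugate heat equation~(\ref{fMio2}) on the compact manifold $Z$, whose solvability and positivity were already settled before Proposition~\ref{misere}, so the groupoid setting introduces nothing new beyond the cited results.
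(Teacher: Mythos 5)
Your proposal is correct and follows exactly the argument the paper intends: the paper's proof is the one-line remark that $\lambda$ is invariant under self-equivalences together with the preceding monotonicity proposition, and your write-up simply fills in the standard Perelman-style details (constancy of $\lambda$ on the breather interval, the chain of inequalities forcing $\mathcal{F}(g(t),\rho(t))$ to be constant, and pointwise vanishing of the nonnegative integrand via the pushforward measure on the compact orbit space $W$). No gaps; this matches the paper's approach.
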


\begin{ppr}
A groupoid steady soliton on a closed groupoid $(\mathcal{G},M)$ with compact connected space of orbits $W$ is Ricci flat and the mean curvature class $\Theta$ is equal to zero.
\end{ppr}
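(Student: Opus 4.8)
The plan is to treat the steady soliton equation $Ric(g)+\frac12\mathcal{L}_{\theta^{\sharp}}g=0$ as a Bakry--\'Emery Ricci-flat condition and to exploit it with the weighted integration-by-parts machinery of Section~2. First I would record that, because $\theta$ is closed, $\frac12\mathcal{L}_{\theta^{\sharp}}g=\nabla\theta$, so the soliton equation reads $R_{ij}+\nabla_i\theta_j=0$. Tracing with $g$ gives the scalar identity $R+\mathrm{div}(\theta)=0$. All the quantities appearing here are $\mathcal{G}$-invariant, so Lemma~\ref{byparts-lemma}, Corollary~\ref{byparts-cr2} and the formulas (\ref{integpartdos})--(\ref{integparttres}) apply; the pointwise curvature identities used below are local on $M$ and need no invariance.

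Next I would derive the groupoid analogue of the classical soliton identity $R+|\theta|^2=\mathrm{const}$. Taking the divergence of the soliton equation and using the contracted second Bianchi identity $\mathrm{div}(Ric)=\frac12\nabla R$ together with the Bochner commutation formula for the closed one-form $\theta$, namely $\nabla^i\nabla_i\theta_j=\nabla_j(\mathrm{div}\,\theta)+R_{jk}\theta^k$, I can substitute $\mathrm{div}(\theta)=-R$ from the trace and $R_{jk}\theta^k=-\nabla_j\theta_k\,\theta^k=-\frac12\nabla_j|\theta|^2$ from the soliton equation. This collapses to $\nabla(R+|\theta|^2)=0$, so $R+|\theta|^2$ is constant on $M$, equivalently a constant function on $W$.

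Finally I would run a weighted Bochner argument to kill $\nabla\theta$. Combining the Bochner formula $\frac12\Delta|\theta|^2=|\nabla\theta|^2+\langle\nabla^i\nabla_i\theta,\theta\rangle$ with the three identities above yields the clean pointwise statement $\frac12\underline{\Delta}|\theta|^2=|\nabla\theta|^2$, where $\underline{\Delta}$ is the modified Laplacian. Integrating against $\phi^2\,d\mu_g$ and using (\ref{integparttres}) with $\chi\equiv 1$, the left-hand side vanishes, forcing $\int_M|\nabla\theta|^2\,\phi^2 d\mu_g=0$ and hence $\nabla\theta\equiv 0$. The soliton equation then gives $Ric=-\nabla\theta=0$, so $(\mathcal{G},M,g)$ is Ricci flat and $R=0$. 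Consequently $\mathrm{div}(\theta)=-R=0$, and applying Lemma~\ref{byparts-lemma} with $\alpha=\theta$ and $\omega\equiv1$ gives $\int_M|\theta|^2\phi^2 d\mu_g=\int_M\mathrm{div}(\theta)\,\phi^2 d\mu_g=0$; thus $\theta\equiv0$ and its class $\Theta$ vanishes.

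The hard part will be the second step: making the soliton identity and the weighted Bochner identity close up requires care with the commutator terms and with the bookkeeping of the $\underline{\mathrm{div}}$ and $\underline{\Delta}$ operators, and one must ensure the integration-by-parts formulas are applied to genuinely $\mathcal{G}$-invariant tensors so that the noncompactness of $M$ is absorbed by the cutoff $\phi$. Once the algebra of these identities is correct, the compactness of $W$ (finiteness of $\int_M\phi^2 d\mu_g$) does the rest.
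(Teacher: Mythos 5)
Your argument is correct, and it is genuinely different from the one in the paper. The paper's proof is dynamical: it observes that the soliton's Haar system is the minimizer of $\mathcal{F}(g,\cdot)$, so that $R+2\,\mathrm{div}(\theta)-|\theta|^2=\lambda$, combines this with the traced soliton equation to get $\underline{\mathrm{div}}(\theta)=\lambda$ and hence $\lambda=0$ after integration, and then invokes an external result (\cite{hilaire}[Lemma 6]) stating that an eternal Ricci flow solution on an \'etale groupoid with compact connected orbit space is Ricci flat; the vanishing of $\theta$ then falls out of the earlier identities. Your proof is entirely static and self-contained: you derive the soliton identity $\nabla\bigl(R+|\theta|^2\bigr)=0$ from the contracted Bianchi identity and the commutation formula for the closed form $\theta$ (this is exactly the computation the paper carries out, with $\epsilon=0$, in the Remark following the proposition), feed it into the Bochner formula to obtain the clean pointwise identity $\tfrac12\underline{\Delta}|\theta|^2=|\nabla\theta|^2$, and integrate against $\phi^2\,d\mu_{g}$ using (\ref{integparttres}) to force $\nabla\theta\equiv0$, hence $Ric=0$, $R=0$, $\mathrm{div}(\theta)=0$, and finally $\int_M|\theta|^2\phi^2 d\mu_g=0$ by Lemma~\ref{byparts-lemma}. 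What your route buys is independence from both the minimizer characterization (whose applicability to an arbitrary steady soliton is left implicit in the paper) and the eternal-solution lemma of \cite{hilaire}; what it costs is the more delicate pointwise bookkeeping of the commutator terms, which you have handled correctly. The only point worth making explicit is the last inference from $\int_M|\nabla\theta|^2\phi^2 d\mu_g=0$ to $\nabla\theta\equiv0$: since $|\nabla\theta|^2$ is $\mathcal{G}$-invariant, this integral equals $\int_W|\nabla\theta|^2\,d\eta(g)$ and $d\eta(g)$ has full support on $W$, so the conclusion is valid.
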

\begin{proof}
The Haar system  $\rho(t)$ is the minimizer of $\mathcal{F}(g(t),.)$ for all $t$. Based on the proof of Proposition~\ref{misere}, we have
\begin{eqnarray*}
R + 2 \mathrm{div}(\theta) - |\theta|^2 = \lambda.
\end{eqnarray*}
Since this is a groupoid steady soliton, we know that $ Ric + \nabla \theta= 0$ and this gives us $R + \mathrm{div}(\theta) =0$ by taking the trace. So the above equation reduces to 
\begin{eqnarray*}
 \mathrm{div}(\theta) - |\theta|^2 = \lambda \\
\Rightarrow \underline{\mathrm{div}}(\theta) = \lambda
\end{eqnarray*}
Integrating this over $M$ with respect to the measure $\phi^2 d\mu_{g}$, we get that $\lambda =0$. 

  The groupoid soliton will give an eternal solution to the Ricci flow. By \cite{hilaire}[Lemma 6], an eternal Ricci flow solution on an \'etale groupoid with compact connected space of orbits must be Ricci flat. So $R=0$. Using the previous equations, we deduce that $\theta =0$. Hence, the clas $\Theta$ is equal to zero. This completes the proof.
 \end{proof}

\begin{rema}
Suppose that we have a Ricci soliton of the form 
\begin{eqnarray}
Ric(g) + \nabla(\alpha) + \frac{\epsilon}{2} g =0 \label{solitonG}
\end{eqnarray}
on a smooth manifold $M$ where $\alpha$ is a closed one form and $\epsilon$ is some constant. The cases $\epsilon > 0$, $\epsilon <0$ and $\epsilon =0$ will correspond to expanding, shrinking and steady solitons respectively. Just as in the case of gradient Ricci solitons, we take the divergence of the above equation and get
\begin{eqnarray*}
g^{ki}\nabla_{k}R_{ij} + g^{ki}\nabla_{k} \nabla_{i} \alpha_{j} =0 \\
\Rightarrow \frac{\nabla_{j}R}{2} + g^{ki} \nabla_{k}\nabla_{i} \alpha_{j} =0.
\end{eqnarray*}
Now, if we use the fact that $\nabla \alpha $ is symmetric, we get
\begin{eqnarray*}
\nabla_{k}\nabla_{i}\alpha _{j} &=&\nabla_{k}\nabla_{j}\alpha _{i} \\
                                &=& \nabla_{j}\nabla_{k}\alpha _{i} -R_{kjim}\alpha^{m}.
\end{eqnarray*}
Hence 
\begin{eqnarray*}
	 g^{ki} \nabla_{k}\nabla_{i} \alpha_{j} &=& \nabla_{j}(\mathrm{div}(\alpha)) + R_{jm}\alpha^{m} \\
	                                        &=& \nabla_{j}(\mathrm{div}(\alpha)) - \nabla_{j} \alpha_{m} \alpha^{m} -\frac{\epsilon}{2} \alpha_{j} \\
																					&=&  \nabla_{j}(\mathrm{div}(\alpha)) - \nabla_{j}(|\alpha|^2) -\frac{\epsilon}{2} \alpha_{j}.
\end{eqnarray*}	
 
So we now have
\begin{eqnarray}
\frac{\nabla_{j}R}{2} + \nabla_{j}(\mathrm{div}(\alpha)) - \nabla_{j}(|\alpha|^2) -\frac{\epsilon}{2} \alpha_{j} =0.
\end{eqnarray}
Taking the trace of (\ref{solitonG}) gives us $R + \mathrm{div}(\alpha) -\frac{n\epsilon}{2} =0$. We apply this to the above equation and end up with 
\begin{eqnarray}
\nabla\left( R + |\alpha|^2 \right) + \epsilon \alpha = 0  \label{solitonG2}
\end{eqnarray}
In the steady case ($\epsilon =0$), this reduces to 
\begin{eqnarray*}
\nabla\left( R + |\alpha|^2 \right) = 0 \\
\Rightarrow R + |\alpha|^2  = constant.
\end{eqnarray*}

Notice that, if we take $\alpha$ to be the mean curvature form $\theta$, then the left-hand side of the above equation is the integrand of the $\mathcal{F}$-functional that we defined in the beginning of this section.  

In the case where $\epsilon \neq 0$, Equation~(\ref{solitonG2}) implies that the closed form $\alpha$ is actually exact and this would reduce to the familiar gradient Ricci soliton setting. So we can think of the class $\Theta$ as a an obstruction to defining a $\mathcal{W}$-functional on Riemannian groupoids.
\end{rema}
We end this section by showing that, as expected, there is a differential Harnack inequality associated to the system of equations (\ref{monotonoseq}). More precisely, we fix as before a time dependent Haar system $\rho_{0}(t)$ with variation determined by a $\mathcal{G}$-invariant function $\beta_{0}$ and with mean curvature form $\theta_{0}$. We write the solution $\rho(t)$ of (\ref{monotonoseq}) as $d\rho= e^{f} d\rho_{0}$ so that the evolution equation of $\rho$ is now equivalent to 
\begin{eqnarray*}
\frac{\partial f}{\partial t}= |\theta_{0} + \nabla f|^2 - R - \mathrm{div}(\theta_{0} + \nabla f) - \beta_{0} \\
\end{eqnarray*}
Setting $v=e^{-f}$, we can rewrite the above equation as
\begin{eqnarray}
 P(v) = 0  \label{Poperator}
\end{eqnarray}
 where $P$ is the differential operator
\begin{eqnarray}
P= \frac{\partial}{\partial t} -2 \langle \theta_{0}, \nabla(.) \rangle  - \left( \mathrm{div}(\theta_{0}) - |\theta_{0}|^2 + \beta_{0} \right).
\end{eqnarray}
We have the following result.
\begin{ppr}
Supose the $\mathcal{G}$-invariant function $v=e^{-f}$ satisfies (\ref{Poperator}) where the $\mathcal{G}$-invariant metric varies under the Ricci flow. Then
\begin{eqnarray*}
P \left( (R+ 2 \mathrm{div}(\theta_{0} + \nabla f) -|\theta_{0} + \nabla f|^2) v \right) = 2|Ric + \nabla \nabla f + \nabla \theta_{0}|^2 v \geq 0.
\end{eqnarray*}
\end{ppr}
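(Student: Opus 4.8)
The plan is to recognize the statement as the groupoid analogue of Perelman's differential Harnack estimate in the steady ($\mathcal{F}$-functional) setting, and to reduce it to the classical pointwise computation \cite{perelman}. Write $\theta = \theta_{0} + \nabla f$ for the mean curvature form of $\rho(t) = e^{f}\rho_{0}$ and set $H = R + 2\mathrm{div}(\theta) - |\theta|^{2}$, so that the claim reads $P(Hv) = 2|Ric + \nabla\theta|^{2}v$ with $v = e^{-f}$. Both sides are $\mathcal{G}$-invariant functions on $M$, so this is a \emph{pointwise} identity; since $\partial_{t}$, $\nabla$, $\Delta$, $\mathrm{div}$ and $\theta$ are all the ordinary Riemannian objects on $M$ (the $\mathcal{G}$-invariance only guaranteeing that they descend), the whole statement can be verified by ordinary tensor calculus at a point, exactly as on a closed manifold.

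First I would peel off the factor $v$. Applying the product rule to $P(Hv)$, the terms in which no derivative lands on $H$ assemble into $H\cdot P(v)$, which vanishes because $v$ solves (\ref{Poperator}). Using $\nabla v = -v\nabla f$ and combining the drift $\theta_{0}$ carried by $P$ with the $-\nabla f$ produced by differentiating $v$, the surviving terms collapse to
\[
P(Hv) = v\left(\partial_{t}H + \Delta H - 2\langle \theta, \nabla H\rangle\right).
\]
Hence it suffices to establish the scalar evolution identity $\partial_{t}H + \Delta H - 2\langle\theta,\nabla H\rangle = 2|Ric + \nabla\theta|^{2}$, which no longer involves $v$, nor $\theta_{0}$ and $\beta_{0}$ separately.

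For this identity I would collect the relevant Ricci-flow evolution equations. The equation for $f$ says precisely that $\rho = e^{f}\rho_{0}$ solves (\ref{monotonoseq}), so by (\ref{theta-evolution}) the reference data cancel and $\theta$ evolves by $\partial_{t}\theta = d\beta$ with $\beta = |\theta|^{2} - R - \mathrm{div}(\theta)$. Combined with $\partial_{t}R = \Delta R + 2|Ric|^{2}$ and $\partial_{t}g^{ij} = 2R^{ij}$ (and the induced variation of the Christoffel symbols), these yield $\partial_{t}(\mathrm{div}\,\theta)$ and $\partial_{t}|\theta|^{2}$. Substituting into $\partial_{t}H$ and expanding $\Delta H$ and $\langle\theta,\nabla H\rangle$, I would then invoke the contracted second Bianchi identity $\nabla^{i}R_{ij} = \tfrac{1}{2}\nabla_{j}R$ and the Ricci commutation formulas to rewrite the curvature-derivative terms. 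The key point is that only the \emph{closedness} of $\theta$ is used here, so that $\nabla_{i}\theta_{j} = \nabla_{j}\theta_{i}$ and $\partial_{t}\theta$ is again a gradient; its possible non-exactness (coming from $\theta_{0}$) is irrelevant, and the calculation is identical to Perelman's with $\nabla f$ replaced throughout by $\theta$.

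The main obstacle is the bookkeeping in this last step: one must verify that the curvature terms organize into the \emph{complete} square, i.e. that the cross term $4\langle Ric,\nabla\theta\rangle$ and the Hessian term $2|\nabla\theta|^{2}$ emerge with exactly the right coefficients once the commutators and Bianchi substitutions are carried out. This is where sign and index errors are most likely, but the computation is purely formal and coincides with the manifold case after the reduction above has removed $v$ and eliminated the groupoid-specific data $\theta_{0}$, $\beta_{0}$.
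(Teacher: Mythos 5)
Your proposal is correct and follows essentially the same route as the paper: the paper likewise first reduces $v^{-1}P(Hv)$ to $\left(\frac{\partial}{\partial t}+\Delta\right)H - 2\langle \nabla H, \theta_0+\nabla f\rangle$ via the product rule and $P(v)=0$, and then verifies the scalar identity $\left(\frac{\partial}{\partial t}+\Delta\right)H - 2\langle\nabla H,\theta\rangle = 2|Ric+\nabla\theta|^2$ by exactly the term-by-term computation you outline (evolution of $R$, of $\mathrm{div}(\theta)$ and of $|\theta|^2$ using $\partial_t\theta = d(|\theta|^2-R-\mathrm{div}(\theta))$, the contracted Bianchi identity, and the Bochner formula $\Delta|\alpha|^2 = 2|\nabla\alpha|^2+2Ric(\alpha,\alpha)+2\langle\nabla(\mathrm{div}(\alpha)),\alpha\rangle$ for closed one-forms). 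Your observation that only the closedness of $\theta=\theta_0+\nabla f$ enters, so the bookkeeping coincides with Perelman's computation with $\nabla f$ replaced by $\theta$, is precisely the mechanism of the paper's proof.
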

\begin{proof}
We can show that the term
\begin{eqnarray*}
 v^{-1}P \left( (R+ 2 \mathrm{div}(\theta_{0} + \nabla f) -|\theta_{0} + \nabla f|^2) v \right)
\end{eqnarray*}
 is equal to 
\begin{eqnarray}
-2 \langle \nabla H, \theta_{0} + \nabla f \rangle + \left( \frac{\partial }{\partial t} + \Delta \right) H \label{Ppart1} 
  \end{eqnarray}                                                                               
where 
\begin{eqnarray*}
H = R+ 2 \mathrm{div}(\theta_{0} + \nabla f)-|\theta_{0} + \nabla f|^2.
\end{eqnarray*}
We know that
\begin{eqnarray}
\left( \frac{\partial }{\partial t} + \Delta \right)R= \Delta R + 2 |Ric|^2. \label{mize1}
\end{eqnarray}
Given a variation $\delta g =v$, the variation of the divergence of a one form $\alpha$ is given by 
\begin{eqnarray*}
\delta (\mathrm{div}(\alpha))= -\langle v, \nabla \alpha \rangle + \langle  \left(\mathrm{div}(v) - \frac{\nabla{V}}{2} \right),  \alpha \rangle.
\end{eqnarray*}
Applying this, we get 
\begin{eqnarray*}
\left( \frac{\partial }{\partial t} + \Delta \right)(2 \mathrm{div}(\theta_{0} + \nabla f))&=&  4 \langle Ric, \nabla(\theta_{0} + \nabla f) \rangle \\
                   & & + 2 \Delta( \mathrm{div}(\theta_{0} + \nabla f)) + 2\Delta \left(\frac{\partial f }{\partial t} + \beta_{0}\right)
\end{eqnarray*}
which reduces to 
\begin{eqnarray}
\left( \frac{\partial }{\partial t} + \Delta \right)(2 \mathrm{div}(\theta_{0} + \nabla f))&=& 4 \langle Ric, \nabla(\theta_{0} + \nabla f) \rangle -2\Delta R \label{mize2}\\
                                                                                           & & + 2 \Delta |\theta_{0} + \nabla f|^2. \notag
\end{eqnarray}
The last term of $H$ gives us 
\begin{eqnarray*}
\left( \frac{\partial }{\partial t} + \Delta \right)(-|\theta_{0} + \nabla f|^2) &=& -2Ric(\theta_{0} + \nabla f,\theta_{0} +  \nabla f) \\
              & &-2 \langle \theta_{0} + \nabla f, \nabla\left(\frac{\partial f }{\partial t} + \beta_{0} \right) \rangle  - \Delta|\theta_{0} + \nabla f|^2
\end{eqnarray*}
which becomes
\begin{eqnarray*}
\left( \frac{\partial }{\partial t} + \Delta \right)(-|\theta_{0} + \nabla f|^2) &=& -2Ric(\theta_{0} + \nabla f,\theta_{0} + \nabla f) - \Delta|\theta_{0} + \nabla f|^2 \\
                                                                                  & & -2 \langle \theta_{0} + \nabla f, \nabla\left( |\theta_{0} + \nabla f|^2 -R \right) \rangle \\
		                                                                               & &+ 2 \langle \theta_{0} + \nabla f,\mathrm{div}(\theta_{0} + \nabla f) \rangle.
\end{eqnarray*}
Combining this with (\ref{mize1}) and (\ref{mize2}), we get 
\begin{eqnarray*}
\left( \frac{\partial }{\partial t} + \Delta \right)H &=& 2|Ric|^2 + 4 \langle Ric, \nabla(\theta_{0} + \nabla f) \rangle -2Ric(\theta_{0} + \nabla f,\theta_{0} + \nabla f)\\
                & & \Delta|\theta_{0} + \nabla f|^2 -2\langle \theta_{0} + \nabla f, \nabla\left( |\theta_{0} + \nabla f|^2 -R \right) \rangle \\
								& & 2 \langle \theta_{0} + \nabla f,\mathrm{div}(\theta_{0} + \nabla f) \rangle.
\end{eqnarray*}
For any closed  one form $\alpha$, we have
\begin{eqnarray*}
\Delta |\alpha|^2  = 2|\nabla \alpha |^2 + 2 Ric(\alpha, \alpha) + 2 \langle \nabla( \mathrm{div}(\alpha)), \alpha \rangle.
\end{eqnarray*}
Therefore
\begin{eqnarray*}
\left( \frac{\partial }{\partial t} + \Delta \right)H = 2|Ric + \nabla \nabla f + \nabla \theta_{0}|^2 +2 \langle \nabla H, \theta_{0} + \nabla f \rangle
\end{eqnarray*}
which gives us
\begin{eqnarray*}
v^{-1}P(Hv) &=& -2 \langle \nabla H, \theta_{0} + \nabla f \rangle + \left( \frac{\partial }{\partial t} + \Delta \right) H \\
            &=&  2|Ric + \nabla \nabla f + \nabla \theta_{0}|^2.
\end{eqnarray*}
This completes the proof.
\end{proof}

\appendix
\section{The 2-dimensional case}
In this appendix, we look at closed Riemannian groupoids $(\mathcal{G},M,g)$ where $\mathrm{dim} M=2$. If we consider the local models determined by the quintuples $(\mathfrak{g},K,i,Ad,T)$ described in Subsection~\ref{localModel}, we see that, if the structural Lie algebra has dimension at least two, then we are dealing with a locally homogeneous surface. The groupoid is then developable and the $\mathcal{G}$-invariant metric will have constant sectional curvature. The Ricci flow is pretty simple in this case.  On the other hand, if the structural Lie algebra is trivial, then the groupoid is the groupoid of germs of change of charts of a $2$-dimensional orbifold. Except for a few cases (the bad $2$-orbifolds), they will be developable and the Ricci flow is well understood in this setting. Finally, if $\mathfrak{g}$ is isomorphic to $\mathbb{R}$, then we should consider two cases:
 \begin{enumerate}
\item The space of orbits $W$ is a closed interval.
\item The space of orbits $W$ is a circle.
\end{enumerate}
The first case will be weakly equivalent to an $O(2)$ or $SO(2)$ action on the $2$-sphere.  
  By \cite{lott2}[Section 5], the second case is equivalent to the groupoid generated by the action of $\mathbb{R} \rtimes \mathbb{Z}$ on $\mathbb{R} \times \mathbb{R}$ given       by 
\begin{eqnarray*}
(q,n) \cdot (x,y) = (c^{n}x + q, y + n)
\end{eqnarray*}
where $c$ is some fixed nonzero constant. The coordinate $y$ induces a coordinate on the space of orbits which is indeed a circle. It is also clear that the case $c=1$ is equivalent to looking at a torus. In general, after possibly changing the coordinate $y$, we can assume that the invariant Riemannian metric $g$ has the form
\begin{eqnarray}
g(x,y)=e^{2k(y)} dx^2 + e^{2u(y)}dy^2 \label{circlecase}
\end{eqnarray}
where $u(y+1)=u(y)$ and $k(y+1)=k(y) + \lambda $ for some real number $\lambda$ ($\lambda=0$ corresponds to the torus case). The term $e^{2u(y)}dy^2$ inducces a metric $\overline{g}$ on the quotient space. The function $k$ cannot be viewed as a function on the quotient space, but the differential $dk$ is well-defined on this space.
 By doing a simple computation, we can show that the scalar curvature $R$ of the metric $g$ is given by 
\begin{eqnarray}
R &=& -2 \left( \Delta_{\overline{g}}k + |\nabla k|^2 \right) \\
\Rightarrow R &=& -2 \left( \frac{d^2k}{ds^2} + \left(\frac{dk}{ds} \right)^2 \right) \notag
\end{eqnarray}
where $s$ denotes the arc length coordinate on the quotient circle. In terms of the coordinate $s$, we will have $k(s+L) =k(s) + \lambda$ where $L$ is the length of the circle. Finding the metrics $g$ of constant scalar curvature reduces to solving the differential equation 
\begin{eqnarray*}
 \left( \frac{d^2k}{ds^2} + \left(\frac{dk}{ds} \right)^2 \right) = -\frac{c}{2}.
\end{eqnarray*}
We cannot have positive constant scalar curvature. For if $c >0$, the above equation would imply that $\frac{d^2k}{ds^2} <0$. So $\frac{dk}{ds}$ would be strictly decreasing thus contradicting the fact that it is a periodic function of $s$. This periodicity condition also implies that, in the case $c=0$, we must have $k(s)= constant$ (so $\lambda =0)$ and, in the case $c <0$, the metric $g$ is of the form 
\begin{eqnarray*}
ds^2 + e^{2 \lambda s} dx^2.
\end{eqnarray*}
So the constant negative scalar curvature case corresponds to the hyperbolic cusp. 

The Ricci flow equation applied to Riemannian metrics of the form (\ref{circlecase}) reduces to
\begin{eqnarray*}
\frac{\partial u}{\partial t}=\frac{\partial k}{\partial t}= \Delta_{\overline{g}}k + |\nabla k|^2.
\end{eqnarray*}
Note that the evolution equation for $k$ can be written in the simpler form
\begin{eqnarray*}
\frac{\partial e^{k}}{\partial t} = \Delta_{\overline{g}} e^{k}.
\end{eqnarray*}
But this will not be very useful since, as we mentioned earlier, $e^{k}$ cannot be viewed as function on the quotient circle.

By doing a simple computation, we can show that the evolution equation of $|\nabla k|^2$ is given by
\begin{eqnarray}
\left( \frac{\partial}{\partial t} -\Delta_{\overline{g}} \right) |\nabla k|^2 = -2\left((|\nabla k|^2 +(\Delta_{\overline{g}}k)^2\right) + \langle \nabla k, \nabla (|\nabla k|^2) \rangle \label{Kcontrol}
\end{eqnarray}
If $C$ is the maximum of the function $|\nabla k|^2$ at time $t=0$, then, by the maximum principle, $|\nabla k|^2(,t) \leq C$ as long as the solution exists.
We can also show that the function $H= (\Delta_{\overline{g}}k)^2$ satisfies
\begin{eqnarray}
\left( \frac{\partial}{\partial t} -\Delta_{\overline{g}} \right) H \leq -8 |\nabla k|^2 H + \langle \nabla k, \nabla H \rangle. \label{Hcontrol}
\end{eqnarray}
This means that $\Delta_{\overline{g}}k$ stays bounded. Since $R=-2 \Delta_{g}k= -2 \left( \underline{\Delta}k + |\nabla k|^2 \right)$, the scalar curvature is bounded. So the Ricci flow exists for all time. If we assume in addition that $|\nabla k| \neq 0$ at time $t=0$, then we can extract more information from the evolution equation (\ref{Kcontrol}). We can show that the evolution equation of $|\nabla k|$ is given by
\begin{eqnarray}
\left( \frac{\partial}{\partial t} -\Delta_{g} \right) |\nabla k| = -|\nabla k|^3 + \langle \nabla k, \nabla (|\nabla k|) \rangle.  \label{key2}
\end{eqnarray}
So, if $ 0 < \alpha < |\nabla k|^2 < \beta  $ at time $t=0$, we can apply the maximum/ minimum principles to deduce that 
\begin{eqnarray}
\frac{\alpha}{2\alpha t +1 } \leq  |\nabla k|^2 \leq \frac{\beta}{2\beta t +1}. \label{inequ1}
\end{eqnarray}
We can then use the derived lower bound on $|\nabla k|^2$ and consider the equation (\ref{Hcontrol}) to derive the inequality
\begin{eqnarray}
H \leq \frac{C}{(2\alpha t +1)^4} \label{inequ2}
\end{eqnarray}
for some positive constant $C$. We now consider the new family of metrics $ \hat{g}(t)=\frac{g(t)}{t}$. This will be of the form  $\hat{g}(t) = e^{2\hat{k}(y,t)} dx^2 + e^{2\hat{u}(y,t)}dy^2$ where
$|\nabla \hat{k} |^2 = t |\nabla k|^2 $ and the corresponding function $\hat{H}$, as defined above, is given by $\hat{H}=t^2 H$. It follows from (\ref{inequ1}) and (\ref{inequ2}) that 
$|\nabla \hat{k} |^2 \rightarrow \frac{1}{2}$ and $ \hat{H} \rightarrow 0 $ as $t \rightarrow \infty $. Skipping some details, this is saying that $\hat{g}(t)$ converges as $t \rightarrow \infty$ to a metric $g _{\infty} =e^{2 k_{\infty}(y,t)} dx^2 + e^{2 u_{\infty}(y,t)}dy^2$ of constant scalar curvature
\begin{eqnarray*}
R_{g_{\infty}}= -2 \Delta_{g_{\infty}} k_{\infty} = -2 |\nabla k_{\infty} |^2= -1.
\end{eqnarray*}
So it's the hyperbolic cusp.


\begin{thebibliography}{99}
\bibitem{bando}
S. Bando, \emph{Real analyticity of solutions of Hamilton's equation}, Math. Zeit. \textbf{195} (1987), no. 1, 93-97

\bibitem{bedulli}
L. Bedulli, W. He and L. Vezzoni, \emph{Second order geometric flows on foliated manifolds}, prepint arXiv:math/1505.03258 [math.DG].

\bibitem{bridson}
M. Bridson and A. Haefliger, \emph{Metric spaces of non-positive curvature}, Grundlehren der Mathematischen Wissenschaften \textbf{319}, Springer-Verlag, Berlin (1999).

\bibitem{chow1}
B.Chow, P. Lu and L. Ni, \emph{Hamilton's Ricci flow},  Graduate Studies in Mathematics Volume \textbf{77}, American Mathematical Society Science Press, (2006).

\bibitem{chow2}
B. Chow, S.- C. Shu, D. Glickenstein, C. Guenther, J. Isenberg, T. Ivey, D. Knopf, P. Lu, F. Luo and L. Ni, \emph{The Ricci flow: techniques and applications. Part II: Analytic aspects.} Mathematical surveys and Monographs, \textbf{144}. American Mathematical society, Providence, RI, (2008).

\bibitem {gilkey}
P. B. Gilkey, J. H. Leahy and J. Park, \emph{Spectral geometry, Riemannian submersions and the Gromov-Lawson conjecture}, Studies in Advanced Mathematics, Chapman and Hall/CRC, (1999).

\bibitem{gorokhovsky}
A. Gorokhovsky and J. Lott, \emph{The index of a transverse Dirac-type operator: the case of abelian Molino sheaf}, J. reine angew. Math. \textbf{678} (2013), 125-162.

\bibitem{hamilton1}
R.S. Hamilton, \emph{ A compactness property for solutions of the Ricci flow}, Am. J. Math. \textbf{117} (1995), no. 3, 545-572.

\bibitem{haefliger}
A. Haefliger, \emph{Leaf closures in Riemannian foliations}, A f\^ete in topology, Academic Press, Boston (1988), 3-32.

\bibitem{hilaire}
C. Hilaire, \emph{Long-time behavior of the Ricci tensor under the Ricci flow}, Geometry Volume 2013, Hindawi, (2013).

\bibitem{kleiner}
B. Kleiner and J. Lott, \emph{Notes on Perelman's papers}, Geom. Topol. \textbf{12} (2008), 2587-2855.

\bibitem{kotschwar}
B. Kotschwar, \emph{An energy approach to the problem of uniqueness for the Ricci flow}, Comm. Anal. Geom. \textbf{22} (2014), 149-176.

\bibitem{lott}
J. Lott, \emph{On the long-time behavior of type-III Ricci flow solutions}, Math. Ann. \textbf{339} (2007), no. 3, 627-666.

\bibitem{lott2}
J. Lott, \emph{Dimensional reduction and the long-time behavior of Ricci flow}, Comment. Math. Helv. \textbf{85} (2010), 485-534.

\bibitem{lott3}
J. Lott and Z. Zhang, \emph{Ricci flow on quasiprojective manifolds II}, to appear in J. of the European Math. Soc.

\bibitem{moerdijk}
I. Moerdijk and J. Mr\v{c}un, \emph{Introduction to foliation and Lie groupoids}, Cambridge studies in Advance Mathematics \textbf{91}, Cambridge University press, Cambridge (2003)

\bibitem{molino}
P. Molino, \emph{Riemannian foliations}, Progress in Mathematics \textbf{73}, Birkh\"{a}user, Boston (1998).

\bibitem{perelman}
G. Perelman, \emph{The entropy formula for the Ricci flow and its geometric applications}, arXiv:math/0211159v1 [math.DG].

\bibitem{salem1}
E. Salem, \emph{Riemannian foliations and pseudogroups of isometries}, Appendix D in \cite{molino}.

\bibitem{salem2}
E.Salem, \emph{Une g\'en\'eralisation du th\'eor\`eme de Myers-Steerod aux pseudogroupes d'isom\'etries}, Ann. Inst. Fourier \textbf{35} (1985), 137-158.

\bibitem{shi}
W.-X. Shi, \emph{Deformning the metric on complete Riemannian manifolds}, J. Diff. Geom. \textbf{30} (1989), no. 1, 223-301.


\bibitem{tu}
J.-L. Tu, \emph{The Baum-Connes conjecture for groupoids}, in \emph{$C^{\ast}$-algebras} (Mnster, 1999), Springer, Berlin, (2000), 227-242.

\bibitem{tu2}
J.-L. Tu, \emph{La conjecture de Novikov pour les feuilletages hyperboliques}, K-Theory \textbf{16} (1999),129-184.

\end{thebibliography}
  \end{document}